\documentclass[reqno]{amsart}

\usepackage{
  bm,
  bbm,
  color,
  a4wide,
  amssymb,
  amsmath,
  latexsym,
  calligra,
  mathrsfs,
  stmaryrd,
  hyperref,
  graphicx,
  mathtools,
  enumitem,
  tikz,
  booktabs,
  cancel
}
\usepackage[normalem]{ulem}

\usepackage[2cell,arrow,curve,matrix]{xy}

\newcommand{\p}{\mathfrak{p}}
\newcommand{\q}{\mathfrak{q}}

\newcommand{\fa}{\mathfrak{a}}

\newcommand{\fm}{\mathfrak{m}}
\newcommand{\fr}{\mathfrak{r}}

\newcommand{\fC}{\mathfrak{C}}

\newcommand{\SL}   {\mathbf{SL}}

\newcommand{\Mon}  {\mathbf{Mon}}

\newcommand{\CMon} {\mathbf{CMon}}

\newcommand{\cA}{\mathcal A}
\newcommand{\cB}{\mathcal B}
\newcommand{\cC}{\mathcal C}
\newcommand{\cG}{\mathcal G}
\newcommand{\cE}{\mathcal E}
\newcommand{\cF}{\mathcal F}
\newcommand{\cH}{\mathcal H}

\newcommand{\cL}{\mathcal L}

\newcommand{\I}{\mathbb I}
\newcommand{\N}{\mathbb N}

\newcommand{\bH}{\mathbb H}

\newcommand{\sC}{\mathsf{C}}
\newcommand{\sD}{\mathsf{D}}
\newcommand{\sZ}{\mathsf{Z}}

\DeclareMathOperator*{\Pcar} {\mathsf{Pcar}}

\DeclareMathOperator*{\Car}  {\mathsf{Car}}

\DeclareMathOperator*{\Spec} {\mathsf{Spec}}

\DeclareMathOperator {\Hom}  {\mathsf{Hom}}

\DeclareMathOperator {\id}   {\mathsf{Id}}

\DeclareMathOperator {\Aut}  {\mathsf{Aut}}
\DeclareMathOperator {\End}  {\mathsf{End}} 

\DeclareMathOperator*{\colim}{\mathsf{colim}}

\DeclareMathOperator*{\Coex} {\mathsf{Coext}}

\DeclareMathOperator*{\Pcoex}{\mathsf{PCoext}}
\DeclareMathOperator*{\Clev} {\mathsf{Clev}}

\DeclareMathOperator {\iso}  {\xto{\sim}}

\renewcommand{\ker}{\mathsf{Ker}}

\newcommand{\xto}  [1]   {\xrightarrow{#1}}

\newcommand{\Msl}  [1][M]{#1^\mathsf{sl}}
\renewcommand{\sl}       {(-)^\mathsf{sl}}

\newcommand{\qtext}[1]{\quad\text{#1}\quad}
\newcommand{\qqtext}[1]{\qquad\text{#1}\qquad}

\newcommand{\pt}{\, \bullet\, }

\newcommand{\red}[1]{{\color{red}#1}} 
 \let\scong\cong
\renewcommand{\cong}{\;\scong\;}

\newcommand{\capcup}{%
  \mathbin{
  \ooalign{
  \(\displaystyle\cap\)\cr
  \hidewidth
  \hspace{0.7ex}%
  \raisebox{0.1ex}{\scalebox{0.5}{\(\cup\)}}
  \hidewidth\cr
}
}
}

\newtheorem{Core}{Definition}[section]
\newtheorem{Aux}{Remark}[section]
\newtheorem{Convention}{Convention}

\newtheorem{De}[Core]{Definition}
\newtheorem{Th}[Core]{Theorem}
\newtheorem{Pro}[Core]{Proposition}
\newtheorem{Le}[Core]{Lemma}
\newtheorem{Cor}[Core]{Corollary}

\newtheorem{Rem}[Aux]{Remark}

\newtheorem{Const}[Aux]{Construction}

\newtheorem{Conv}[Convention]{Convention}

\makeatletter
\def\part{%
  \newpage
  \vspace*{2em}%
  \@startsection{part}{0}%
  {\z@}%
  {\z@}%
  {.5\linespacing}%
  {\normalfont\LARGE\bfseries\centering}}
  \makeatother

  \author{Ilia Pirashvili}
  \address{University of Galway, Áras De Brún, Gaillimh/Galway, H91 H3CY, Ireland}
  \email{ilia\_p@ymail.com (personal)}
  \email{ilia.pirashvili@universityofgalway.ie (work)}

  \newcommand{\cHCM}{\cH_{Mn}}

\begin{document}
  \title[Deformation Theory of Monoid Schemes II]{Deformation Theory of Monoid Schemes
  II:\\
  Precartesian Coextensions of Commutative Monoids}

  \maketitle

  \begin{abstract}
    This paper is a continuation of \cite{p15}, where we studied precartesian
    coextensions of commutative monoids by systems of abelian groups. In this paper, we
    generalise it to study coextensions by systems of commutative monoids. Among other
    things, we showcase that this version is able to simultaneously generalise Leech's
    version of coextensions and Redei's version, also called Schreier coextensions.

    We show that what going from systems of abelian groups to systems of commutative
    monoids costs us is quasi-inverses in \(\Pcoex(M, \cL)\). Specifically, instead of
    a symmetric categorical group, they now only become symmetric monoidal groupoids.
    Moreover, though not explicitly stated in the body of the paper, another core
    difference is that for a monoid scheme \(X\), regarded as a monoid functor, a
    precartesian coextension no longer need to be a monoid scheme. Thus, \(\Pcoex(X,
    \cL)\) is, as stated, ineffective at studying monoid scheme coextensions. The rest
    of the theory goes through, but requires developing a new cohomological approach.
  \end{abstract}

  \section*{Introduction}

    The theory of group extensions arose in the classical work of Otto Schreir ``Über
    die Erweiterung von Gruppen, I,II''\cite{Schreier} and then developed by Baer
    \cite{Baer} and A. M. Turing \cite{Turing}. These investigations were the main
    motivation for introducing group cohomology by S. Eilenberg and S. MacLane in the
    series of papers entitled ''Cohomology theory in abstract groups I,II,III''
    \cite{Eilenberg_MacLane}. Nowadays, it is a classical part of homological algebra.

    Generalisation of group extensions and group cohomology to monoids has a long and
    interesting history. There are various approaches for this, but we only list two
    such directions, which are relevant for this paper.\newline

    The first one was proposed by Redei \cite{Reidei}. This approach considers a short
    exact sequence of monoids
    \[ 1\to A \to B \xto{\pi} C\to 0 \]
    where the map \(p\) has a section \(s\) (set map only, in general), which satisfy
    some rather strong conditions. For instance, it guaranties that that any element of
    \(B\) can be written as the product \(a s(c)\) in a unique way, where \(a\in A\)
    and \(c\in C\). In particular, this fact implies we can use \(A \times C\) to write
    \(B\) effectively in ``coordinate form''; hence the reference to Descartes.

    The extensions introduced by Redei are known as Schreier extensions of monoids and
    were studied extensively by many authors including H. Inassaridze, R. Strecker
    \cite{Strecker}, R.O. Fulp and J. W. Stepp \cite{Fulp_Stepp}, I. Fleischher
    \cite{Fleischer}, A. Patchkoria \cite{patchkoria, Patchkoria2}, and many others
    \cite{MF-M-P-S}. It was noted very recently in \cite{Manuell} and \cite{p13b} that
    Schreier extensions of monoids are a particular case of fibred categories,
    introduced by A. Grothendieck in 1961. \newline

    The other direction we wish to highlight is Leech's theory of monoid extensions,
    introduced by J. Leech \cite{Leech}. We also refer to a recent and very nice book
    by A, Cegarra and J. Leech \cite{Cegarra_Leech}. Here, we point out a subtle
    difference between these theories: In Redei's theory, we considered morphisms \(B
    \to C\) which satisfied certain properties. In Leech's theory, they instead carry
    additional data, which consists of a collection of groups \((A_c)_{c\in C}\), with
    actions of \(A_c\) on the fibres \(\pi^{-1}(c)\), satisfying some rather strong
    conditions (in particular, regularity).

    The aim of this work is to develop a theory which generalises both approaches
    simultaneously.\\[2em]

    As the name indicates, this is a continuation of a previous paper I published,
    where I started to study the systematic study of monoid schemes \cite{p15}. In that
    paper, I developed a deformation theory of monoid schemes, where I considered
    coextensions by systems of abelian groups. In that, I showed that the coextensions
    \(\Coex(\cF, \cA)\) of a monoid functor \(\cF\) with a system of abelian groups
    \(\cA\) is a symmetric categorical group and equivalent to the one obtained by the
    abelian group homomorphism \([C^0 \to \ker\partial^1]\). I also showed in
    \cite{p15} that in the special case when \(\cF\) was not just a monoid functor but
    a monoid scheme, the coextensions also happened to all be monoid schemes, thereby
    allowing us to get the theory of monoid coextensions ``for free''. In that
    particular case, the theory actually got richer and we showed that coextensions
    actually formed a stack of
    symmetric categorical groups.\\[2mm]

    In this paper, we proceed to study the coextensions of monoid functors and monoid
    schemes with systems of commutative monoids. This is a rather sizeable
    generalisation that requires new tools and techniques. It also establishes some
    links with a previous paper I wrote \cite{p13b}. In that, the link between the
    Grothendieck construction for fibred categories for one-object categories and
    regular Schreier coextensions was made explicit (as mentioned above, this was also
    independently shown recently in \cite{Manuell}), and generalised to prefibrations
    and Schreier coextensions. In particular, paper \cite{p13b} showed that
    prefibrations are enough for most things, and our current paper reflects that
    sentiment, as we will restrict ourselves to that case.

    One difference to \cite{p13b} is that we only consider commutative monoids here.
    But the main observation, the prefibrations tend to be strong enough to build up
    most of
    the theory, holds in this paper as well.\\[2mm]

    The paper is divided in two parts: The affine and the monoid functor case.

    Having defined an abelian coextension of a monoid \(M\) by a system of abelian
    groups \(\cA\) as a surjective homomorphism \(N\to M\) with regular and compatible
    actions of groups \(\cA(m)\) on the fibres \(\pi^{-1}(m)\) in \cite{p15}, see
    Definition~\ref{def:coext_original}, we define a precartesian coextension
    analogously, but instead of regularity, we demand the existence of a precartesian
    element in the fibre, see \ref{def:pcoext}. More specifically, \(N\) is said to be
    a precartesian coextension of \(M\) by a system of commutative monoids \(\cL\), if
    for every \(m\in M\) we can choose and element \(n_M\in \pi^{-1}(M)\) in it's fibre
    so that every other point of that fibre can be uniquely reached by it through the
    action of \(\cL(m)\) on \(\pi^{-1}(m)\). This element is called a basepoint in the
    paper. Note that by no means are such elements unique in general, but are related
    through invertible elements. Of course, we also need a compatibility condition, but
    this is identical to the abelian case.

    The core difference is that we no longer have groups but monoids acting and can
    thus no longer take inverses. The solution to this is often ``simple bookkeeping'',
    as grouping positive and negative parts on either side of an equality is often
    times enough. But it does require a new type of cohomology theory, and some of our
    results are weakened.

    We then proceed to do a little systematic study of such objects, introducing
    concepts such as morphisms, what it means to be split, pushforwards and other basic
    necessities to develop our theory. An important lemma is~\ref{le:split}, which
    states how to classify split precartesian coextensions. As one might assume from
    the name, these are coextensions which are isomorphic to the trivial coextension,
    being just the semi-direct product \(\cL\rtimes M\). Of course, we give an explicit
    description of what is meant by \(-\rtimes -\) in our setting.\\

    We move forward to define and use the pushforward and pullback constructions, which
    allow us to shift coextensions between varying \(\cL\) or \(M\). While, of course,
    important separately as well, it is together that they allow us to define a central
    part of our paper, namely the Baer sum in our context: When fixing both the monoid
    \(M\) and the system of commutative monoids \(\cL\), we can consider the category
    \(\Pcoex(M,\cL)\) of precartesian coextensions. Our first important theorem
    regarding that is the Short~Five~Lemma~\ref{lem:short5_precart}, which makes
    \(\Pcoex(M,\cL)\) into a groupoid. This is arguably a little surprising, as we can
    still get inverses without requiring that our system \(\cL\) be made of groups.

    The Baer sum then endows \(\Pcoex(M,\cL)\) with the structure of a symmetric
    monoidal groupoid.\\

    As is perhaps more expected, we are able to develop a cohomological machinery that
    allows us to study \(\Pcoex(M,\cL)\). We do so by defining the chain groups
    \(\sC^0, \sC^1, \sC^2\) and the first cocycle monoid \(\sZ^1\subseteq \sC^1\). We
    follow it up by one of our main theorems, being
    Theorem~\ref{thm:classification_pcoex}, which states that we can use our
    cohomological machinery to define a category equivalent to the category of
    precartesian coextensions. Or to put it another way: we can classify
    \(\Pcoex(M,\cL)\) using the cohomological tools \(\sZ^1\) and \(\sC^i\). Moreover,
    this respects the Baer-sum, which is just the pointwise multiplication on the level
    of \(\sZ^1\).

    Much as in paper \cite{p15}, we also define versale coextensions, which are
    coextensions of \(M\) from which we have maps into every other coextension of
    \(M\). Note, however, that the maps are not unique, and neither are the versal
    coextensions (hence the term ``versale'', not ``universal'').\\

    This finishes our results in the affine case of precartesian coextensions and we
    move on to the case of precartesian coextensions of monoid functors.

    A precartesian coextension of a monoid functor is just a collection of compatible
    precartesian coextensions at every point. Other than the expected tasks of checking
    that certain constructions in the affine case respect the compatibility conditions,
    the main challenge in this setting is defining the global cocycle monoid, which is
    done in Section~\ref{sec:global_grillet_pcoex}. After proving a bunch of technical
    lemmas (~\ref{lem:phiG_transitive},~\ref{le:Z_norm_pp},~\ref{lem:phiG_monoid_hom}
    and~\ref{le:Z_norm_1}), which together amount to checking a sheaf condition, we
    prove the classifying theorem in the monoid functor case as well in
    Section~\ref{sec:classifying_thm_functor}, specifically
    Theorem~\ref{thm:classification_pcoex_functor}.\\[2mm]

    The paper is organised as follows: We start with a very quick recollection of some
    important facts regarding monoid schemes, posets, poset topologies, and that
    sheaves over poset topologies correspond to functors over the underlying posets in
    Section~\ref{pro:unique_scheme}. We also recall some constructions important for
    this paper, such as cartesian and precartesian elements and Schreier
    coextensions.\\

    After this, the actual paper splits in two parts, being the affine and the monoid
    functor cases.\\[1em]

    For the affine one, we start with the definition and relate it to the previous
    works in this area. We show that our new approach generalises both the coextensions
    by abelian systems and the commutative Schreier coextensions in
    Lemma~\ref{prop:generalises_group_coext} and
    Proposition~\ref{prop:generalises_schreier}, respectively.\\

    We move to Section~\ref{sec:first-results} where we obtain our first important
    results, most prominently Lemmas~\ref{le:split} and~\ref{lem:short5_precart}.

    In particular, we introduce and discuss split coextensions: While ``trivial'',
    these are one of the most important coextension and subsequently, it has it's own
    subsection~\ref{sec:split-coextension}. We show that split coextensions are
    effectively the same thing as semi-direct products and intimately linked to
    cartesian derivations. This is summed up in Lemma~\ref{le:split}.

    We then observe how basepoints move under morphisms which allows us to prove the
    Short~Five~Lemma~\ref{lem:short5_precart}. As one might expect, this lemma is very
    important and, in particular, it will effectively show that precartesian
    coextensions form a groupoid, see Corollary~\ref{cor:Pcoex_groupoid}.\\

    Section~\ref{sec:pushforward_pullback} deals with the pushforward and the pullback
    constructions. These deal with the following situations: Assume we have a
    coextension \(0 \to \cL \to N \xto{\pi} M \to 0\) and maps \(\alpha: \cL \to
    \cL'\), \(M' \to M\). The pushforward allows us to get a coextension \(0 \to \cL'
    \to K \xto{\sigma} M \to 0\) and the pullback a coextension \(0 \to \varphi^*\cL
    \to \varphi^* N \xto{\sigma} M' \to 0\). The details and what exactly \(K\) and
    \(\sigma\) represent are explained in the corresponding
    Subsections~\ref{sec:pushforward} and~\ref{sec:pullback}, but ultimately, they
    allow us to define the so called Baer sum in \(\Pcoex(M, \cL)\) in
    Section~\ref{sec:pcoex}, more specifically, Subsection~\ref{sec:baer_affine}.

    This endows \(\Pcoex(M, \cL)\) with the structure of a symmetric monoidal groupoid
    (which is a groupoid with an additional symmetric and pseudo-associative operation,
    effectively, a categorical analogue of a commutative monoid).\\

    We move on to developing the cohomological machinery to study coextensions in
    Section~\ref{sec:cohomology_affine}. In particular, we define the classifying
    category \(\fC(M, \cL)\) using the cocycle monoid \(\sZ^1(M,\cL)\), which
    generalises the Grillet complex. Section~\ref{sec:classifying_cat} shows that it
    indeed classifies the precartesian coextensions of \(M\) by \(\cL\), meaning
    \(\Pcoex(M, \cL)\). We also decipher what the Baer sum means in this context and it
    is, as expected, simply the pointwise product.\\

    Our final expedition in the affine case is in the direction of versale (word coming
    from \emph{universal}) coextensions, which are precartesian coextensions of \(M\)
    what have (generally non-unique) maps into every other precartesian coextension of
    \(M\).\\[1em]

    We thus finally move on to Part~\ref{part:functors} which deals with the case of
    monoid functors over posets. To give a quick context as to why we call the first
    part affine in contrast to this, and why we care about coextensions of monoid
    functors over posets, we mention that a monoid scheme is particular type of
    contravariant functor over a (locally lattice) poset. Thus, studying these kinds of
    coextensions are not a ``random'' choices but they correspond exactly to studying
    coextensions of sheaves of monoids over a monoid scheme, for example, if one were
    to study coextensions of quasi-coherent sheaves over a monoid scheme \(X\), such
    monoid functors would be the central objects. This is the motivation behind our
    examination of this topic.\\

    This consists of only a single section, namely Section~\ref{sec:non-affine_pcoex},
    wherein we translate results from the affine to this setting. While most things are
    defined ``locally'', meaning pointwise, this still requires some work, as we have
    to develop a new cohomology theory in Subsection~\ref{sec:global_grillet_pcoex}.
    Showing that the gluing condition holds is also often a fiddly affair that requires
    some verification. There are, however, no great surprises here and one can sum up
    the results of this whole section as saying that ``all affine results translate''.
    For example, \(\Pcoex(\cF, \cL)\) is once again a symmetric monoidal groupoid.\\

    However, unlike our work in \cite{p15}, we do \emph{not} consider the case of
    coextensions of monoid schemes and thus, do not obtain a stack of such groupoids.
    The reason is that, while localisation \(M_\q \to M_\p\) \emph{do} indeed yield
    functors between \(\Pcoex(M_\p, \cL_\p) \to \Pcoex(M_\q \to \cL_\q)\), we can not
    deduce that a coextension of a monoid scheme will be a monoid scheme; we can merely
    say that it will be a monoid functor. Thus, \((\Pcoex(M_\p, \cL_\p))_{\p\in
    \Spec(M)}\) do not classify monoid scheme coextensions anymore and subsequently,
    this approach is not fruitful, and we omit it.

  \section{Preliminaries}
    \label{rem:predecart_cart} \label{def:cartesian_map} \label{pro:unique_scheme}

  \subsection{Preliminaries on the Geometry of Monoids}

      \begin{Conv}[Monoids]
        Monoid are assumed to be commutative throughout this paper, unless explicitly
        stated otherwise. To clarify, we do not assume the existence of an absorbing
        element (zero in the multiplicative notation) and should an element exist which
        happens to absorb everything, monoid homomorphisms are not required to respect
        it.
      \end{Conv}

      We begin by recalling the main results of \cite{p1}, detailing some results about
      prime ideals of commutative monoids. These will be important when we move towards
      monoid schemes. The theory of monoid schemes follows the theory of ring schemes
      very closely, with the exception that we do not have addition.

  \subsubsection{\(\Spec(M)\) and \(M^{sl}\)}

        A subset \(\fa\subseteq M\) of a monoid \(M\) is called an \emph{ideal} if for
        every \(a\in \fa\) and every \(m \in M\), \(am\in \fa\). To put it an other
        way, if \(\fa\) is a sub-\(M\)-set. Ideals are thus allowed to be \emph{empty}.
        An ideal \(\fa\) that is of the form
        \[ (a) \;=\; aM \;:=\; \{am\mid m\in M\} \]
        is called a \emph{principal} ideal generated by \(a\).

        \medskip

        An ideal \(\p\subseteq M\) is called \emph{prime} if \(1 \notin \p\) and
        \(ab\in \p\), \(b\notin \p\) implies \(a\in \p\). An ideal \(\fm\subseteq M\)
        is called \emph{maximal} if \(1 \notin \fm\) and for any chain of inclusions of
        ideals \(\fm \subsetneq \fa \subseteq M\), we have \(\fa = M\).

        It is not hard to see that every monoid \(M\) has a unique maximal ideal,
        namely \(M\setminus M^\times\), the set of all non-invertible elements. Indeed,
        a monoid is a group if and only if it has two ideal, being \((1)\) and
        \(\emptyset\). An other way of saying that is if \(M^\times \subseteq M\), the
        subgroup of invertible elements of \(M\), forms an ideal.

        \medskip

        The set of all prime ideals of a monoid is denoted by \(\Spec(M)\). It carries
        with it a natural topology, called the \emph{Zariski topology}, where open sets
        are generated by sets of the form
        \[ D(f) \;:=\; \{\p \in \Spec(M) \mid f \notin \p\}. \]

        Unlike for rings, the (set) union of ideals is again an ideal and thus, the
        union of prime ideals is again a prime ideal. This makes \(\Spec(M)\) into a
        join-semilattice under union. The least element is the empty set and the set of
        of non-invertible elements (maximal ideal, being the union of all ideals not
        containing \(1\)) is the greatest element. It's dual operation, as promised by
        semilattice theory is given by
        \[ \p\capcup \q \;:=\; \bigcup\limits_{\fr \subseteq \p \cap \q} \fr. \]
        In other words, we take the union of all prime ideas contained in both \(\p\)
        and \(\q\). These operations are continuous in the Zariski topology, making
        \(\Spec(M)\) a topological monoid (or to be more precise, a topological
        semi-lattice).

  \subsubsection{Semilattice theory}

        It becomes evident that semilattices and idempotent monoids are an important
        part of studying monoid theory. For this, let us formally introduce them:

        A \emph{(join) semilattice} is a poset \(L\) with a least element such that
        every pair \(a, b \in L\) has a join (least upper bound) \(a \vee b\). There
        exists a dual notion of meet-semilattice, but as the theory is identical (just
        order-revered), we will not state them. Indeed, we add the following
        convention:
        \begin{Conv}[Semilattice]
          By semilattice we will henceforth mean join-semilattice.
        \end{Conv}
        Semilattices are intimately connected to many parts of mathematics, three of
        which we mention explicitly: monoids, categories and topological spaces.

        \medskip

        Every semilattice is a monoid with \(\vee\) being the operation and the least
        element being the unit. It is clear that \(a\vee a = a\) for every element
        \(a\) of our semilattice. This, the obtained monoid is an idempotent monoid. In
        turn, if \(m^2 = m\) holds for all \(m\in M\) (idempotent monoid), we can
        define an ordering by \(m \leq n\) if and only if \(mn = n\). Under this, the
        unit becomes the least element. This gives us a full equivalence of categories
        \[ \mathsf{Idempotent Monoids} \equiv \mathsf{Semilattices}. \]

        We imbued \(\I\) with the topology that declares \{\(\emptyset\), \(\{1\}\),
        \(\I\)\} as open sets. One checks that \(\I \scong \Spec(\N)\) as topological
        monoids, where \(\N = \{1, t, t^2, \ldots\}\) is the free commutative monoid
        with one generator.

        \medskip

        We have a covariant functor
        \[ \sl: \Mon \to \SL \]
        which assigns \(M/\sim\) to a monoid \(M\), where \(\sim\) is the smallest
        congruence \(\sim\) such that \(m \sim m^2\) for all \(m\). This is the
        universal semilattice quotient of \(M\) and we denote it's image by \(M^{sl}\).
        By Grillet \cite[Theorem~1.2, Ch.~III]{gr}, \(M^{sl} = M/{\sim}\) where \(a
        \sim b\) if and only if there exist \(m, n \geq 1\) and \(u, v \in M\) with
        \(a^m = ub\) and \(b^n = va\). Note that this is the exact relation we have for
        the basis of the Zariski topology, being equivalent to \(D(a) = D(b)\).

        This ties into the link of semilattices with topological spaces. A semilattice
        is in particular a poset and thus, induces a topology where the base of open
        sets is given by subsets of the form \(L_p(P):= \{q\in P \mid p \leq q\}\).
        This induces a topology on both \(\Spec(M)\) and \(M^{sl}\), for a commutative
        monoid \(M\).

        Both \(\Spec(M)\) and \(M^{sl}\) carry a natural topology induced by order,
        where set satisfying the property ``if \(x\in U\) and \(y\leq x\), then \(y\in
        U\)'' are called open. Note, in \(M^{sl}\) we would have to take \(y\geq x\).
        The right-most side carries the subspace topology induced by the product
        topology on \(\prod_{m \in M} \I\).

        We are now in a position to list some of the main results of \cite{p1}.

        \begin{Le}[Reduction Lemma {\cite[Lemma 2.1]{p1}}] \label{lem:spec=hom}
          There are natural homeomorphism of topological monoids
          \[ \Spec(M) \cong \Spec(M^{sl}) \cong \Hom(M, \I). \]
        \end{Le}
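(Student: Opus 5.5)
The plan is to build the two homeomorphisms explicitly. For \(\Spec(M) \cong \Hom(M,\I)\), send a prime ideal \(\p\) to its characteristic map \(\chi_\p : M \to \I\), with \(\chi_\p(m) = 1\) if \(m \notin \p\) and \(\chi_\p(m) = 0\) if \(m\in\p\). Here \(\I = \{0,1\}\) is the two-element multiplicative monoid, the semilattice with \(1\) as unit. The map \(\chi_\p\) is a monoid homomorphism for three reasons. First, \(1\notin\p\) gives \(\chi_\p(1)=1\). Second, the ideal property gives \(\chi_\p(ab)=0\) whenever \(a\) or \(b\) lies in \(\p\). Third, primality gives \(ab\notin\p\) whenever \(a,b\notin\p\).

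Conversely, a homomorphism \(f : M\to\I\) determines \(\p_f := f^{-1}(0)\). This set is an ideal, it does not contain \(1\), and \(f(ab)=f(a)f(b)\) shows it is prime. The two assignments are visibly mutually inverse.

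Next I would check that this bijection respects the structure. The join of primes \(\p\cup\q\) corresponds to the pointwise product \(\chi_\p\chi_\q\), since \(m\notin \p\cup\q\) exactly when both values are \(1\). The empty ideal corresponds to the constant map \(1\).

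For the topology, \(\Hom(M,\I)\) carries the subspace topology from \(\prod_{m\in M}\I\), whose subbasic opens are \(\{f \mid f(m)=1\}\). Under the bijection these correspond exactly to \(D(m)=\{\p \mid m\notin\p\}\), which generate the Zariski topology. So the bijection is a homeomorphism, and an isomorphism of topological monoids with the monoid structure described above.

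For \(\Spec(M)\cong\Spec(M^{sl})\), I would argue via the universal property rather than with ideals directly. Since \(\I\) is idempotent, every homomorphism \(M\to\I\) factors uniquely through the quotient \(q : M\to M^{sl}\). Hence precomposition with \(q\) gives a bijection \(\Hom(M^{sl},\I)\to\Hom(M,\I)\), and it is clearly compatible with pointwise products. Composing with the first homeomorphism, applied to both \(M\) and \(M^{sl}\), gives the middle isomorphism. Concretely, it sends \(\p\subseteq M^{sl}\) to \(q^{-1}(\p)\).

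The remaining and main point is that this bijection is a homeomorphism. Continuity is immediate, since the subbasic open indexed by \(m\in M\) pulls back to the one indexed by \(q(m)\). For openness, I would use surjectivity of \(q\): every subbasic open on the \(M^{sl}\) side is indexed by some \([m]=q(m)\), so it equals the image of the open indexed by \(m\). Thus the subbasic opens correspond bijectively.

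Naturality in \(M\) follows because everything is defined by precomposition. A homomorphism \(\varphi : M\to M'\) induces \(\p\mapsto\varphi^{-1}(\p)\) on spectra, and this corresponds to \(f\mapsto f\circ\varphi\) on \(\Hom(-,\I)\).
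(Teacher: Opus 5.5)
The paper does not prove this lemma itself; it only cites \cite[Lemma~2.1]{p1}. Your argument is correct, and it is the standard one the paper relies on afterwards, for instance when it writes \(\Spec(\pi)=\Hom(\Msl[\pi],\I)\). You are also right to use the Zariski topology, since that is the topology matching the product topology on \(\prod_{m\in M}\I\).

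The one point you leave implicit is that these are \emph{topological} monoids, i.e.\ that the monoid operations are continuous. This is immediate: multiplication \(\I\times\I\to\I\) is continuous because the preimage of \(\{1\}\) is \(\{1\}\times\{1\}\). Hence the pointwise product on \(\Hom(M,\I)\) is continuous, and your homeomorphism transports this to \(\cup\) on \(\Spec(M)\).
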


        The Reduction Lemma says that the study of \(\Spec(M)\) reduces entirely to the
        study of spectra of semilattices and that the spectrum of a monoid is a
        semilattice itself.

        In the finitely generated case, we also have the following:

        \begin{Le}
          Let \(M\) be a finitely generated monoid. There is an order-reversing
          isomorphism
          \[ \Spec(M)\cong M^{sl}. \]
        \end{Le}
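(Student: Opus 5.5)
By the Reduction Lemma~\ref{lem:spec=hom} we have \(\Spec(M)\cong\Spec(M^{sl})\). Moreover, \(M^{sl}\) is finitely generated whenever \(M\) is, being a quotient of \(M\). It therefore suffices to treat a finitely generated semilattice \(L=M^{sl}\) and to produce an order-reversing bijection \(\Spec(L)\cong L\). A finitely generated idempotent commutative monoid is finite: every element is a product of a subset of the generators, so \(|L|\le 2^{k}\). So we are reduced to finite semilattices.

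The map we will use is
\[ \Phi\colon L\to\Spec(L),\qquad \Phi(a)=\{x\in L\mid a\not\le x\}, \]
the complement of the principal up-set \(L_a=\{x\mid a\le x\}\).

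\textbf{Step 1: \(\Phi(a)\) is a prime ideal.} Recall that \(a\le x\) means \(ax=x\), i.e.\ \(x=a\vee x\).
\begin{itemize}
\item \(\Phi(a)\) is an ideal: if \(x\ge a\) fails, then \(xy\ge a\) can still hold in general, so we must argue via the complement. The complement of \(\Phi(a)\) is \(L_a\), and we need \(xy\in L_a \Rightarrow x,y\in L_a\). That is false in general. So the correct candidate is the other way round: an ideal of \(L\) is an up-set (since \(xy\ge x\)), and a subset \(\p\) is a prime ideal iff its complement is a submonoid that is closed under passing to smaller elements. Indeed \(xy\notin\p\) iff \(x\notin\p\) and \(y\notin\p\), and \(1\notin\p\).
\item Hence the complement \(F=L\setminus\p\) is a down-set closed under joins. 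In a finite semilattice such an \(F\) is nonempty and has a largest element \(a=\bigvee F\), and then \(F=\{x\mid x\le a\}\).
\end{itemize}
So the correct map is \(\Phi(a)=\{x\mid x\not\le a\}\). It is a prime ideal because \(\{x\mid x\le a\}\) is a down-set containing \(1\) and closed under \(\vee\).

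\textbf{Step 2: \(\Phi\) is a bijection.} By Step 1, every prime \(\p\) arises as \(\Phi(a)\) with \(a=\bigvee(L\setminus\p)\); this uses finiteness, or at least that \(F\) has a maximum. Injectivity follows because \(a\) is recovered as the maximum of \(L\setminus\Phi(a)\). The empty ideal corresponds to the top element of \(L\), and the maximal ideal \(L\setminus\{1\}\) corresponds to \(1\).

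\textbf{Step 3: order reversal.} We have \(a\le b\) iff \(\{x\le a\}\subseteq\{x\le b\}\) iff \(\Phi(b)\subseteq\Phi(a)\). Thus \(\Phi\) is an order-reversing isomorphism of posets, hence also of semilattices. Under it, the union of primes corresponds to the meet \(a\capcup b\), matching the description of the semilattice structure on \(\Spec\). If homeomorphism is intended, it follows because both topologies are the ones induced by the order (Alexandrov on a finite poset), with the orientations as in the stated convention.

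The only genuine obstacle is Step 1, namely getting the correct description of primes in a semilattice and the existence of a maximum in the complement; this is exactly where finite generation enters. One point to be careful about is that the order on \(M^{sl}\) is \(m\le n\iff mn=n\), with the unit as least element, and the direction of every inclusion has to be checked against it.
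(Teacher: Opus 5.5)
Your proof is correct. The paper gives no argument for this lemma; it is recalled from \cite{p1} directly after the Reduction Lemma. The natural comparison is therefore with the framework set up there. Through \(\Spec(M)\cong\Spec(M^{sl})\cong\Hom(M^{sl},\I)\), a prime of the finite semilattice \(L=M^{sl}\) is the complement of \(\chi^{-1}(1)\) for a character \(\chi\colon L\to\I\). The set \(\chi^{-1}(1)\) is exactly a down-set containing \(1\) and closed under \(\vee\), i.e.\ your \({\downarrow}a\).

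You run the same idea directly on primes, which makes the argument self-contained and the bijection explicit. Since \(\q\mapsto\rho^{-1}(\q)\) along \(\rho\colon M\to M^{sl}\) preserves inclusions in both directions, the order reversal survives the reduction. Concretely, the class of \(a\) corresponds to \(\{x\in M\mid x\nmid a^n \text{ for all } n\geq 1\}\), the largest prime of \(M\) not containing \(a\). You also locate correctly where finite generation enters. In the semilattice \((\{0,1,2,\ldots\},\max)\), for instance, the whole monoid is a join-closed down-set with no maximum, so \(\Phi\) misses the empty prime.

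Two points of presentation should be fixed.
\begin{itemize}
\item In Step~1 you first define \(\Phi(a)=\{x\mid a\not\le x\}\), find mid-argument that it is not an ideal, and only then switch to \(\Phi(a)=\{x\mid x\not\le a\}\). Define \(\Phi(a):=L\setminus{\downarrow}a\) from the outset and drop the first bullet; Steps~1--3 are then a complete proof.
\item Since \(\Phi\) reverses order, it carries \(\vee\) on \(L\) to \(\capcup\) on \(\Spec(L)\), and the meet of \(L\) to \(\cup\). Your phrase ``isomorphism of semilattices'' should be stated in that form. Note also that the paper reserves \(\capcup\) for the operation on \(\Spec\), not for the meet in \(L\).
\end{itemize}
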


        We also have the following in the non-finitely generated case:

        \begin{Cor}[{\cite[Corollary 2.2]{p1}}]
          If \((M_j)_{j \in J}\) is a filtered direct system of monoids, then the
          canonical map
          \[
            \Spec\!\bigl(\colim_{j} M_j\bigr) \;\longrightarrow\; \lim_{j} \Spec(M_j)
          \]
          is a homeomorphism.
        \end{Cor}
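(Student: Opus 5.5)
The plan is to use the Reduction Lemma~\ref{lem:spec=hom}, which identifies \(\Spec(N)\) naturally with \(\Hom(N,\I)\), topologised as a subspace of \(\prod_{n\in N}\I\). Put \(M := \colim_j M_j\) and let \(\iota_j\colon M_j\to M\) be the canonical maps. Then the map in question becomes
\[ \Hom\bigl(\colim_j M_j,\I\bigr)\longrightarrow \lim_j \Hom(M_j,\I), \qquad f\mapsto (f\circ\iota_j)_j . \]
By naturality of the Reduction Lemma, this agrees with the map \(\p\mapsto(\iota_j^{-1}(\p))_j\) on spectra. Bijectivity is then just the universal property of the colimit in the category of (commutative) monoids: a compatible family \(f_j\colon M_j\to\I\) factors uniquely through \(M\). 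So the map is a continuous bijection, since each component \(f\mapsto f\circ\iota_j\) is a restriction of a projection and hence continuous. What remains is to show it is open, or equivalently that the inverse is continuous.

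For this I will use the explicit description of filtered colimits. Because the system is filtered, \(M\) is the disjoint union \(\bigsqcup_j M_j\) modulo the relation \(x\sim y\) when \(x\) and \(y\) become equal at some later stage, and the multiplication is computed at a common later stage. In particular, every \(m\in M\) has the form \(\iota_j(x)\) for some \(j\) and some \(x\in M_j\). The topology on \(\Hom(M,\I)\) is generated by the subbasic sets \(\{f : f(m)=1\}\), which correspond to the sets \(D(m)\), together with their complements as permitted by the topology on \(\I\). Fix \(m=\iota_j(x)\). Then the image of \(\{f : f(m)=1\}\) is \(\pi_j^{-1}\{g : g(x)=1\}\cap \lim_j\Hom(M_j,\I)\), where \(\pi_j\) is the projection to the \(j\)-th factor. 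This set is open in the limit topology. So the images of subbasic opens are open, and since the map is a bijection, images commute with unions and finite intersections; hence the map is open.

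The step I expect to need the most care is the topological bookkeeping. First, I have to make sure that ``the limit of the spaces \(\Spec(M_j)\)'' carries the subspace topology from \(\prod_j\Spec(M_j)\), and that this matches the product topology coming from \(\prod\I\). Second, I have to check that the choice of representative \(x\) of \(m\) does not matter. This is handled by compatibility: if \(\iota_j(x)=\iota_k(y)\), then \(x\) and \(y\) have equal images at some stage \(l\), so the two preimage sets coincide on the limit. Filteredness is used exactly here, and in the surjectivity of \(\bigsqcup_j M_j\to M\); without it the colimit would involve words in elements from different stages and the subbasic sets would not pull back to a single factor.
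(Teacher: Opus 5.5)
Your proposal is correct and follows the paper's route: transport along the Reduction Lemma and use that \(\Hom(-,\I)\) turns colimits into limits. You also spell out the topological check (a continuous bijection that sends subbasic opens to opens), which the paper's one-line proof leaves implicit. One small correction to your closing remark: filteredness is not actually needed, since in an arbitrary colimit of commutative monoids every element is a finite product \(\iota_{j_1}(x_1)\cdots\iota_{j_r}(x_r)\) and \(\{f : f(ab)=1\} = \{f : f(a)=1\}\cap\{f : f(b)=1\}\) for maps into \(\I\), so the image of each subbasic open is still a finite intersection of single-factor opens; this is consistent with the paper's proof, which never invokes the filtered hypothesis.
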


        \begin{proof}
          Since \(\Hom(-, \I)\) converts colimits to limits, the result is immediate
          from Lemma~\ref{lem:spec=hom}.
        \end{proof}

        A first consequence of the Reduction Lemma, which we will use repeatedly, is
        that a surjection of monoids is detected on spectra by the associated
        semilattices.

        \begin{Le}
          Let \(\pi\colon N \to M\) be a surjective monoid homomorphism. Then
          \(\Spec(\pi)\colon \Spec(M)\to\Spec(N)\), \(\p \mapsto \pi^{-1}(\p)\), is an
          injective map of posets. It is an isomorphism of posets if and only if
          \(\Msl[\pi]\colon\Msl[N]\to\Msl[M]\) is injective, in which case
          \(\Msl[\pi]\) is an isomorphism of semilattices.
        \end{Le}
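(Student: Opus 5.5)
We use the Reduction Lemma~\ref{lem:spec=hom} to identify \(\Spec(M)\cong\Hom(M,\I)\) and \(\Spec(N)\cong\Hom(N,\I)\). Under this identification a prime \(\p\) corresponds to the homomorphism \(\chi_\p\colon M\to\I\) sending \(\p\) to the absorbing element and \(M\setminus\p\) to \(1\). The map \(\Spec(\pi)\) then becomes precomposition \(\chi\mapsto\chi\circ\pi\), since \(\chi_\p\circ\pi\) vanishes exactly on \(\pi^{-1}(\p)\). The first step is routine: check that \(\pi^{-1}(\p)\) is a prime ideal of \(N\), and that \(\p\subseteq\q\) implies \(\pi^{-1}(\p)\subseteq\pi^{-1}(\q)\), so \(\Spec(\pi)\) is a map of posets.

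Injectivity follows from surjectivity of \(\pi\), because \(\pi(\pi^{-1}(\p))=\p\). The same identity shows that \(\Spec(\pi)\) reflects the order: if \(\pi^{-1}(\p)\subseteq\pi^{-1}(\q)\), applying \(\pi\) gives \(\p\subseteq\q\). Hence \(\Spec(\pi)\) is an order embedding, and it is a poset isomorphism if and only if it is surjective. In the \(\Hom\) picture, surjectivity means that every homomorphism \(N\to\I\) factors through \(\pi\).

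For the equivalence, every homomorphism into the semilattice \(\I\) factors uniquely through \((-)^{\mathsf{sl}}\). So \(\Hom(N,\I)=\Hom(\Msl[N],\I)\) and \(\Hom(M,\I)=\Hom(\Msl[M],\I)\), and \(\Spec(\pi)\) is precomposition with \(\Msl[\pi]\). Since \((-)^{\mathsf{sl}}\) is a quotient functor and \(\pi\) is surjective, \(\Msl[\pi]\) is surjective. If \(\Msl[\pi]\) is injective, it is therefore a bijective homomorphism of semilattices, hence an isomorphism, and precomposition with it is bijective. This gives one direction, and also the final clause of the statement.

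The converse is the main obstacle. Suppose every homomorphism \(\Msl[N]\to\I\) factors through \(\Msl[\pi]\), and let \(x\neq y\) in \(\Msl[N]\). We need a homomorphism \(\Msl[N]\to\I\) separating them, since such a map cannot factor through \(\Msl[\pi]\) unless \(\Msl[\pi](x)\neq\Msl[\pi](y)\). I would use that the semilattice characters \(L\to\I\) separate points of any semilattice \(L\). Order \(L\) so that \(xy=y\) means \(x\le y\). If \(x\neq y\), then without loss of generality \(y\not\le x\). Take the filter-complement character: send \(z\mapsto 1\) if \(z\le x\), and to the absorbing element otherwise. The set \(\{z: z\le x\}\) is closed under products and is downward closed, so this is a homomorphism. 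It separates \(x\) from \(y\). Translating back through Grillet's description of \(\sim\), this is the statement that \(D(a)=D(b)\) for all primes forces \(a\sim b\), which is exactly the separation property used in the Reduction Lemma. Applying it to \(L=\Msl[N]\) completes the proof.
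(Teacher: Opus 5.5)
Your proof is correct and follows essentially the same route as the paper. Both arguments identify \(\Spec(\pi)\) with precomposition by \(\Msl[\pi]\) on characters into \(\I\), use that \(\Msl[\pi]\) is surjective, and for the converse exhibit a prime of \(N\) separating two distinct elements of \(\Msl[N]\) that have the same image in \(\Msl[M]\).

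The differences are improvements in detail rather than a new route. You get injectivity and order-reflection directly from \(\pi(\pi^{-1}(\p))=\p\). You construct the separating character (equal to \(1\) exactly on \(\{z : z\le x\}\)) explicitly, instead of citing the Reduction Lemma. Your reason why it cannot come from \(\Spec(M)\) is that it would have to factor through \(\Msl[\pi]\); this is more precise than the paper's remark about unions of fibres of \(\pi\).
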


        \begin{proof}
          Since \(\pi\) is surjective, \(\Msl[\pi]\) is surjective, and \(\Spec(\pi) =
          \Hom(\Msl[\pi],\I)\) is injective by Lemma~\ref{lem:spec=hom}. If
          additionally \(\Msl[\pi]\) is injective, it is an isomorphism of semilattices
          and thus \(\Hom(\Msl[\pi],\I)\) is a bijection, which is clearly order
          preserving in both directions. Conversely, if \(\Msl[\pi]\) is not injective,
          then, again by Lemma~\ref{lem:spec=hom}, there are \(n,n'\in N\) with
          \(D(n)\neq D(n')\) but \(D(\pi(n)) = D(\pi(n'))\); any prime of \(N\)
          separating \(n\) from \(n'\) (which exists as \(D(n)\neq D(n')\)) is then not
          of the form \(\pi^{-1}(\p)\), as such preimages are, by construction, unions
          of fibres of \(\pi\).
        \end{proof}

  \subsection{Systems of Commutative Monoids over a Monoid}

      This paper is a continuation of \cite{p15}, where we generalise the theory of
      abelian group extensions to commutative monoid extensions. Subsequently, we will
      need many definitions and constructions that are either verbatim the same, or
      closely related. For the convenience of the reader, there will be some
      repetition, particularly in terms of definitions and constructions. The proofs,
      of course, will not be repeated and the reader is directed to the above cited
      paper should they become relevant. This section is particularly closely related,
      and will thus be kept relatively abridged.

  \subsubsection{Systems of commutative monoids}

        Recall the following definition:

        \begin{De}[{\cite[Definition~def:HC]{p15}}]
          Let \(M\) be a monoid. Denote by \(\bH(M)\) the category whose objects are
          the elements of \(M\) and whose morphisms are given as follows: For \(a,b\in
          M\), a morphism \(a \to b\) is a pair \((r, a)\) with \(ra = b\), written \(a
          \xto{r} ra\). The composition is given by the multiplication law, meaning
          \((a \xto{r} ra) \circ (ra \xto{s} sra) = (a \xto{sr} sra)\).
        \end{De}

        \begin{De}
          A \emph{system of commutative monoids over \(M\)} is a covariant functor
          \(\cL\colon \bH(M) \to \CMon\). Their collection forms a category under
          natural transformations as morphisms, denoted by \(\cHCM(M)\).
        \end{De}

        To spell out the definition, for each \(a\in M\), we have a monoid \(\cL(a)\)
        and for each morphism \((r, a)\in \bH(M)\), a morphism \(r_*\colon \cL(a) \to
        \cL(ra)\), such that \(1_* = \id\) and \((rs)_* = r_* \circ s_*\). A
        \emph{morphism} \(\lambda\colon \cL \to \cL'\), meaning a natural
        transformation, is a collection of morphisms \(\lambda(a)\colon \cL(a) \to
        \cL'(a)\) with commuting squares.

        We compare this to the definition of a system of abelian groups over \(M\),
        which is a covariant functor in the category of abelian groups. It is clear
        that systems of abelian groups embed fully and faithfully in systems of
        commutative monoids.

        \medskip

        It is a basic fact that this category (being a functor category) has all limits
        and colimits, which are computed pointwise.

  \subsubsection{The semidirect product} Let \(\cL\) be a
        system of commutative monoids over a monoid \(M\).

        \begin{De} \label{def:semidirect}
          The \emph{semidirect product} \(\cL\ \rtimes M\) is the monoid \(\{(x,a) \mid
          a \in M,\; x \in \cL(a)\}\) with multiplication
          \[ (x,a)( x', a') \;:=\; ( a_*(x') \cdot a'_*(x)),\, aa'). \]
        \end{De}

        \begin{Pro}
          The category \(\cHCM(M)\) is equivalent to the category of commutative monoid
          objects in the slice category \(\CMon/M\).
        \end{Pro}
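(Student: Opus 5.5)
The plan is to write down explicit functors in both directions and check that they are mutually quasi-inverse. A commutative monoid object in \(\CMon/M\) is a surjection-free datum: a homomorphism \(\pi\colon N\to M\) together with morphisms over \(M\)
\[ \mu\colon N\times_M N\to N, \qquad e\colon M\to N, \]
satisfying associativity, commutativity and unitality. Here \(N\times_M N\) is the product in the slice category. Both \(\mu\) and \(e\) are monoid homomorphisms, and \(\pi e=\id\).

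\textbf{From systems to monoid objects.} Send \(\cL\) to the projection \(\cL\rtimes M\to M\) of Definition~\ref{def:semidirect}, with structure maps
\[ \mu\bigl((x,a),(y,a)\bigr)=(xy,a), \qquad e(a)=(1,a). \]
The points to check are the following.
\begin{itemize}
\item \(e\) is a homomorphism. This holds because \(a_*(1)\,a'_*(1)=1\).
\item \(\mu\) is a homomorphism. Multiplying \(((x,a),(y,a))\) by \(((x',a'),(y',a'))\) and applying \(\mu\) gives
\[ \bigl(a_*(x')a'_*(x)\,a_*(y')a'_*(y),\,aa'\bigr), \]
which equals \(\bigl(a_*(x'y')\,a'_*(xy),\,aa'\bigr)\) by commutativity of \(\cL(aa')\) and because \(a_*,a'_*\) are homomorphisms.
\item The monoid-object axioms hold, since they reduce to the corresponding axioms of each commutative monoid \(\cL(a)\).
\end{itemize}
A natural transformation \(\lambda\) is sent to \((x,a)\mapsto(\lambda(a)x,a)\). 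This is a homomorphism by naturality, and it visibly respects \(\mu\) and \(e\).

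\textbf{From monoid objects to systems.} Given \((N,\pi,\mu,e)\), define the system fibrewise:
\[ \cL(a):=\pi^{-1}(a), \qquad \text{multiplication } \mu, \qquad \text{unit } e(a), \qquad r_*(x):=e(r)\cdot x, \]
where the last product is taken in \(N\).
\begin{itemize}
\item \(r_*\) preserves units, since \(e(r)e(a)=e(ra)\).
\item \(r_*\) preserves multiplication. Because \(\mu\) is a homomorphism on \(N\times_MN\), we get
\[ \mu\bigl(e(r)x,\,e(r)y\bigr)=\mu\bigl(e(r),e(r)\bigr)\,\mu(x,y)=e(r)\,\mu(x,y), \]
using that \(e(r)\) is the unit of its fibre.
\item Functoriality, \(1_*=\id\) and \((rs)_*=r_*s_*\), follows from \(e\) being a homomorphism.
\end{itemize}
Morphisms of monoid objects over \(M\) restrict fibrewise to natural transformations.

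\textbf{The composites and the main obstacle.} The composite starting from \(\cL\) returns \(\cL\) on the nose: the fibre over \(a\) is \(\cL(a)\), and \(e(r)\cdot(x,a)=(r_*x,ra)\). The hard step is the other composite. We must show that \(\Phi\colon \cL\rtimes M\to N\), \((x,a)\mapsto x\), is an isomorphism of monoid objects. It is clearly a bijection over \(M\) compatible with \(\mu\) and \(e\). What remains is that the multiplication of \(N\) is recovered by the semidirect formula, and this is an Eckmann--Hilton type interchange argument. For \(x\in\pi^{-1}(a)\) and \(x'\in\pi^{-1}(a')\), the pairs \((x,e(a))\) and \((e(a'),x')\) lie in \(N\times_MN\). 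Applying the homomorphism \(\mu\) to their product gives
\[ x\cdot x'=\mu\bigl(x,e(a)\bigr)\,\mu\bigl(e(a'),x'\bigr)=\mu\bigl(e(a')x,\,e(a)x'\bigr)=\mu\bigl(a'_*x,\,a_*x'\bigr), \]
which is exactly the multiplication of Definition~\ref{def:semidirect}. Hence \(\Phi\) is a homomorphism, and it is natural in \((N,\pi,\mu,e)\). This yields the claimed equivalence.
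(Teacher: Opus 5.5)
Your proof is correct, and it is complete apart from one routine check noted below. The paper gives no argument of its own: it only says the result is ``effectively proven'' in \cite{p15}, where the analogous statement for systems of abelian groups is treated. Your explicit pair of quasi-inverse functors therefore supplies the proof the paper leaves to that reference.

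Your argument also makes visible why the transfer from the abelian case is legitimate. The interchange computation \(xx'=\mu\bigl(e(a')x,\,e(a)x'\bigr)\), the multiplicativity of \(r_*\) via \(\mu(e(r),e(r))=e(r)\), and the unit laws use only the monoid structure, never inverses or cancellation.

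The one thing you take for granted is that \(\cL\rtimes M\) is a commutative monoid, which the paper explicitly flags as still needing verification. This is immediate:
\begin{itemize}
  \item both bracketings of a triple product equal \(\bigl((bc)_*(x)\cdot(ac)_*(y)\cdot(ab)_*(z),\,abc\bigr)\);
  \item the multiplication formula is symmetric, so the monoid is commutative;
  \item \((e_{1_M},1_M)\) is the unit.
\end{itemize}
Equivalently, it is the unit-cocycle case \(f=e\) of Lemma~\ref{le:cocycle_extension}.
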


        Both results (in the first, we still need to show that it is indeed a monoid)
        is effectively proven in \cite{p15}.

  \subsection{Cartesian and Precartesian Elements}

      In this section, we aim to recall some basic definitions and properties about
      Schreier extensions, (pre-)fibrations, (pre-)cartesian elements and cleavages.
      This is because our theory will be a type of merger/simultaneous generalisation
      of group extension theory and commutative (pre-)cartesian/Schreier extensions.
      More details, including proofs, can be readily found in numerous sources. One
      such example is \cite{p13b}.

      \bigskip

      Let \(\pi\colon N \to M\) be a monoid homomorphism throughout this section.

      \begin{De}
        An element \(n \in N\) is called \emph{\(\pi\)-precartesian} if for every \(z
        \in N\) with \(\pi(z) = \pi(n)\), there exists a unique \(a \in \ker(\pi)\)
        with \(z = na\).
      \end{De}

      We denote the set of all \(\pi\)-precartesian elements by \(\Pcar(\pi)\).

      \begin{Le}
        Let \(n, n' \in \Pcar(\pi)\) with \(\pi(n) = \pi(n')\). There exists a unique
        invertible \(a \in N\) with \(n' = na\).
      \end{Le}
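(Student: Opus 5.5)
The plan is to obtain $a$ directly from the two precartesian properties and then show it is invertible by composing the two factorisations. Throughout, $a$ is produced as an element of $\ker(\pi)$, and uniqueness is the uniqueness built into the definition of a $\pi$-precartesian element. A caveat on the word ``unique'' is recorded at the end.

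\textbf{Existence.} Since $\pi(n') = \pi(n)$ and $n$ is $\pi$-precartesian, the definition gives a unique $a \in \ker(\pi)$ with $n' = na$. Symmetrically, since $n'$ is $\pi$-precartesian and $\pi(n) = \pi(n')$, there is a unique $b \in \ker(\pi)$ with $n = n'b$.

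\textbf{Invertibility.} Substituting one factorisation into the other gives $n = nab$ and $n' = n'ba$. Here $ab = ba$ lies in $\ker(\pi)$, because $\ker(\pi)$ is a submonoid and $N$ is commutative. Now apply the uniqueness clause of precartesianness of $n$ to the element $z = n$. Both $1$ and $ab$ are elements of $\ker(\pi)$ satisfying $n = n\cdot(\,\cdot\,)$, so $ab = 1$. Hence $a$ is invertible in $N$, with inverse $b \in \ker(\pi)$. The same argument applied to $n'$ gives $ba = 1$ directly, although commutativity already makes this redundant.

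\textbf{Uniqueness.} Let $a'$ be any element of $\ker(\pi)$ with $n' = na'$. By the precartesian property of $n$, applied to $z = n'$, we get $a' = a$. So $a$ is the unique such element of $\ker(\pi)$, and it is automatically invertible.

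The only step with real content is invertibility: it needs $ab \in \ker(\pi)$ together with the uniqueness clause applied to $z = n$ itself.

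\textbf{Caveat on ``unique''.} The uniqueness is necessarily taken inside $\ker(\pi)$, which is the domain where the definition of precartesian grants it. Without this restriction the claim can fail. For example, let $N = M = \{0, 1, u\}$ with $u^2 = 1$ and $u0 = 0$, and let $\pi = \id$. Then $\ker(\pi) = \{1\}$ and $n = 0$ is precartesian. Both $1$ and $u$ are invertible elements of $N$ with $0 = 0\cdot 1 = 0 \cdot u$, so uniqueness fails if one quantifies over all invertible elements of $N$. The statement is therefore proved in the form ``there is a unique $a \in \ker(\pi)$ with $n' = na$, and this $a$ is invertible''. This is exactly the reading under which the later statement about basepoints being related by invertible elements is applied.
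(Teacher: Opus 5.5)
Your argument is correct, and it is the same argument the paper uses for its analogue, Lemma~\ref{le:basepoints_invertibly_related}; the paper gives no proof of this preliminary statement itself. In both, the two precartesian properties produce connecting elements \(a,b\in\ker(\pi)\), and the uniqueness clause at \(z=n\) forces \(ab=1\). Your caveat is also right. Uniqueness only holds inside \(\ker(\pi)\), and your example \(N=M=\{0,1,u\}\) with \(\pi=\id\) shows that the literal reading ``unique invertible \(a\in N\)'' is false. The statement should therefore be read as ``there is a unique \(a\in\ker(\pi)\) with \(n'=na\), and it is invertible''.
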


      \begin{De}
        An element \(n \in N\) is called \emph{\(\pi\)-cartesian} if for every \(z \in
        N\) and every factorisation \(\pi(z) = \pi(n) \cdot v\),\, \(v \in M\), there
        exists an element \(y \in N\) with \(z = ny\) and \(\pi(y) = v\). Moreover, if
        \(ny_1 = ny_2\) with \(\pi(y_1) = \pi(y_2)\), we must have \(y_1 = y_2\).
      \end{De}

      The set of \(\pi\)-cartesian elements is denote by \(\Car(\pi)\). One has
      \(N^\times \subseteq \Car(\pi) \subseteq \Pcar(\pi) \subseteq N\). In particular,
      as \(1\in N^\times\) always holds, \(\Pcar(\pi)\) and \(\Car(\pi)\) are always
      non-empty.

      \begin{Le}
        Let \(n_1 \in \Car(\pi)\).
        \begin{enumerate}
          \item If \(n_2 \in \Pcar(\pi)\), then \(n_1 n_2 \in \Pcar(\pi)\).
          \item If \(n_2 \in \Car(\pi)\), then \(n_1 n_2 \in \Car(\pi)\).
        \end{enumerate}
      \end{Le}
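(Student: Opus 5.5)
The plan is to unwind the definitions directly, treating \(n_1n_2\) as ``first \(n_1\), then \(n_2\)''. The existence part of the \(\pi\)-cartesian property of \(n_1\) will peel off the \(n_1\) factor. The relevant property of \(n_2\) then finishes existence. Uniqueness will be obtained in the same order: first by the uniqueness clause for \(n_1\), then by that for \(n_2\). Only associativity and the fact that \(\pi\) is a homomorphism are needed; commutativity plays no essential role.

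For (1), let \(z\in N\) with \(\pi(z)=\pi(n_1n_2)\). I will view this as the factorisation \(\pi(z)=\pi(n_1)\cdot v\) with \(v=\pi(n_2)\). Since \(n_1\in\Car(\pi)\), there is \(y\in N\) with \(z=n_1y\) and \(\pi(y)=\pi(n_2)\). Because \(n_2\) is precartesian, \(y=n_2a\) for a unique \(a\in\ker(\pi)\), and hence \(z=n_1n_2a\). For uniqueness, suppose \(n_1n_2a=n_1n_2a'\) with \(a,a'\in\ker(\pi)\). Then \(n_1(n_2a)=n_1(n_2a')\) and \(\pi(n_2a)=\pi(n_2)=\pi(n_2a')\). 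The uniqueness clause in the definition of \(\pi\)-cartesian gives \(n_2a=n_2a'\), and precartesianness of \(n_2\) then gives \(a=a'\).

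For (2), let \(z\in N\) with \(\pi(z)=\pi(n_1n_2)\cdot v\). I will rewrite this as \(\pi(z)=\pi(n_1)\cdot(\pi(n_2)v)\). Cartesianness of \(n_1\) yields \(y\) with \(z=n_1y\) and \(\pi(y)=\pi(n_2)v\). Cartesianness of \(n_2\) then yields \(w\) with \(y=n_2w\) and \(\pi(w)=v\), so \(z=(n_1n_2)w\). For uniqueness, suppose \(n_1n_2y_1=n_1n_2y_2\) with \(\pi(y_1)=\pi(y_2)\). Then \(n_2y_1\) and \(n_2y_2\) have equal images and satisfy \(n_1(n_2y_1)=n_1(n_2y_2)\), so \(n_2y_1=n_2y_2\) by the uniqueness clause for \(n_1\). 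The uniqueness clause for \(n_2\) then gives \(y_1=y_2\).

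There is no real obstacle here. The only point requiring care is that the uniqueness clause for \(n_1\) is applied to the elements \(n_2a\) and \(n_2a'\) (respectively \(n_2y_i\)), so one must check beforehand that these have the same image under \(\pi\).
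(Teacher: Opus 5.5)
Your proof is correct. In both parts, the cartesian property of \(n_1\) removes the factor \(n_1\): in (1) you apply it to the factorisation \(\pi(z)=\pi(n_1)\cdot\pi(n_2)\), and in (2) to \(\pi(z)=\pi(n_1)\cdot(\pi(n_2)v)\). The property of \(n_2\) then finishes the existence step. For uniqueness, you rightly check first that \(\pi(n_2a)=\pi(n_2a')\) (since \(a,a'\in\pi^{-1}(1_M)\)), respectively \(\pi(n_2y_1)=\pi(n_2y_2)\), before invoking the uniqueness clause for \(n_1\).

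The paper gives no proof of this lemma; it is recalled in the preliminaries from the literature on (pre)fibrations. Your direct unwinding of the definitions is the standard argument. Your remark that only associativity is used is also accurate for the definitions as stated, because all factorisations there are on the right.
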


      \begin{De}
        A morphism \(\pi\colon N \to M\) is called \emph{fibration} (respectively
        prefibration) if \(\Car(\pi) \to M\) \((\text{respectively}\; \Pcar(\pi) \to
        M)\) is surjective.
      \end{De}

      \begin{Le}
        Let \(\pi\) be a fibration. Then \(\Car(\pi) = \Pcar(\pi)\).
      \end{Le}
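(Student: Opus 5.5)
The plan is to show that every \(\pi\)-precartesian element is \(\pi\)-cartesian. The reverse inclusion \(\Car(\pi) \subseteq \Pcar(\pi)\) was already recorded above. The idea is to compare an arbitrary precartesian element with a cartesian element in the same fibre, which exists because \(\pi\) is a fibration, and to use the fact that any two precartesian elements in the same fibre differ by a unit.

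Let \(n \in \Pcar(\pi)\) and set \(m := \pi(n)\). Since \(\pi\) is a fibration, the map \(\Car(\pi) \to M\) is surjective, so we can choose \(c \in \Car(\pi)\) with \(\pi(c) = m\). Because \(\Car(\pi) \subseteq \Pcar(\pi)\), both \(c\) and \(n\) are precartesian with \(\pi(c) = \pi(n)\). The lemma on pairs of precartesian elements in the same fibre then gives a unique invertible \(u \in N\) with \(n = cu\).

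It remains to see that \(cu\) is cartesian. We have \(u \in N^\times \subseteq \Car(\pi)\), and \(c \in \Car(\pi)\). Part (2) of the lemma on products of cartesian elements, with \(n_1 = c\) and \(n_2 = u\), therefore gives \(n = cu \in \Car(\pi)\). Hence \(\Pcar(\pi) \subseteq \Car(\pi)\), and equality follows.

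There is no real obstacle; the only point requiring care is the order of the factors when invoking the lemmas, and this is harmless because all monoids are commutative by convention. If one wished to avoid the product lemma, the fallback would be to verify directly that \(cu\) is cartesian. Given \(z\) with \(\pi(z) = \pi(cu)v\), cartesianness of \(c\) applied to the factorisation \(\pi(z) = \pi(c)\cdot\pi(u)v\) yields \(y\) with \(z = cy\) and \(\pi(y) = \pi(u)v\); then \(z = (cu)(u^{-1}y)\) with \(\pi(u^{-1}y) = v\). For uniqueness, \(cuy_1 = cuy_2\) with \(\pi(y_1) = \pi(y_2)\) gives \(c(uy_1) = c(uy_2)\) with \(\pi(uy_1) = \pi(uy_2)\), hence \(uy_1 = uy_2\) and so \(y_1 = y_2\).
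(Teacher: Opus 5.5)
The paper does not prove this lemma. It appears among the background facts on cartesian and precartesian elements, with a pointer to \cite{p13b}, so there is no in-paper argument to compare yours against.

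Your proof is correct. You take a precartesian \(n\) and a cartesian \(c\) in the same fibre, which exists because \(\pi\) is a fibration. You then write \(n = cu\) with \(u \in N^\times\), and conclude using \(N^\times \subseteq \Car(\pi)\) together with closure of \(\Car(\pi)\) under products. Your direct fallback verification that \(cu\) is cartesian is also sound.

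One small caution: your argument only needs the \emph{existence} of the unit \(u\), and that is all you should take from the lemma on precartesian elements sharing a fibre. Its uniqueness clause holds only if you additionally require \(u \in \ker(\pi)\). For example, take \(\pi = \id\) on a nontrivial group with an adjoined absorbing element \(0\); then \(0 = 0\cdot u\) for every unit \(u\). So you should drop the word ``unique'' when you invoke it; nothing else in your argument changes.
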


      \begin{Le}
        Let \(n \in N\).
        \begin{enumerate}
          \item If \(n\) is invertible, then \(n \in \Car(\pi)\).
          \item If \(\pi(n)\) is invertible, then \(n \in \Car(\pi)\) if and only
                if \(n\) is invertible.
        \end{enumerate}
      \end{Le}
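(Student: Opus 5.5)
The plan is to argue directly from the definition of a \(\pi\)-cartesian element. Part (1) follows by multiplying by \(n^{-1}\). For part (2), the nontrivial direction comes from applying the cartesian property to the single element \(z = 1\).

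For (1), let \(n\) be invertible with inverse \(n^{-1}\). Take any \(z\in N\) and any factorisation \(\pi(z) = \pi(n)\cdot v\) with \(v\in M\), and set \(y := n^{-1}z\). Then \(ny = z\). Since \(\pi\) is a monoid homomorphism, \(\pi(n^{-1}) = \pi(n)^{-1}\), so
\[ \pi(y) \;=\; \pi(n)^{-1}\pi(z) \;=\; \pi(n)^{-1}\pi(n)\, v \;=\; v. \]
This gives existence. For uniqueness, if \(ny_1 = ny_2\), multiplying by \(n^{-1}\) gives \(y_1 = y_2\). This holds even without the hypothesis \(\pi(y_1)=\pi(y_2)\). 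Hence \(n\in \Car(\pi)\), which recovers the inclusion \(N^\times\subseteq \Car(\pi)\) noted above.

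For (2), suppose \(u := \pi(n)\) is invertible in \(M\). The implication ``\(n\) invertible \(\Rightarrow n\in\Car(\pi)\)'' is exactly (1). For the converse, assume \(n\in \Car(\pi)\) and apply the cartesian property to \(z = 1\) with the factorisation
\[ \pi(1) \;=\; 1 \;=\; \pi(n)\cdot u^{-1}. \]
This produces \(y\in N\) with \(ny = 1\) and \(\pi(y) = u^{-1}\). Since all monoids are commutative by our convention, also \(yn = 1\), so \(n\) is invertible.

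The only point needing care, and the nearest thing to an obstacle, is recognising that invertibility of \(\pi(n)\) is exactly what lets us write the trivial factorisation of \(\pi(1)\) through \(\pi(n)\). Once the test element \(z=1\) is chosen, the rest is immediate. Commutativity is used only to turn the one-sided inverse into a two-sided one. The uniqueness clause in the definition of cartesian elements is not needed for (2).
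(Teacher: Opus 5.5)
Your proof is correct. The paper states this lemma without proof, deferring to earlier work on Schreier extensions, and your direct check from the definition is the standard argument: multiply by \(n^{-1}\) for (1), and for (2) test the cartesian property on \(z=1\) with the factorisation \(1=\pi(n)\cdot\pi(n)^{-1}\). Using commutativity to turn \(ny=1\) into a two-sided inverse is legitimate under the paper's standing convention; without it, the uniqueness clause would do the job, since \(n(yn)=n\cdot 1\) and \(\pi(yn)=\pi(1)\) force \(yn=1\).
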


      \begin{De}
        A \emph{cleavage} of a prefibration \(\pi\colon N\to M\) is a function
        \(\mu\colon M \to N\), which assigns to each element \(m\in M\), a
        \(\pi\)-precartesian element \(\mu(m)\in\pi^{-1}(m)\), such that \(\mu(1)=1\).
      \end{De}

      If \(\pi\) is a fibration, then \(\mu(m)\) is automatically \(\pi\)-cartesian,
      see \cite[Section 5]{p13b}.

      \medskip
      \noindent Denote by \(\Clev(\pi)\) the set of all cleaves of \(\pi\). It is not
      empty by the axiom of choice.

      Recall also the definition of a Schreier extension.

      \begin{De}[Schreier extension]
      Let \(A, M\) be monoids with \(A\) additionally commutative and assume there is
      given an action of \(M\) on \(A\), \((a,m)\mapsto am\). A sequence
      \[
      \xymatrix{
        1\ar[r] & A \ar[r]^\iota & N \ar[r]^\sigma & M\ar[r] & 1
      }
    \]
    of monoids and monoid homomorphisms is called a Schreier extension of \(M\) by
    \(A\) if the following conditions hold:
    \begin{enumerate}
      \item \(\sigma\iota(a)=1\) for all \(a\in A\),
      \item \(\sigma\) is a prefibration,
      \item \(\iota\) induces an isomorphism \(A\to \ker(\sigma)\) of
            \(M\)-modules,
      \item \(\iota(a)x=x\iota(\phi_{\sigma(x)}(a))\) for all \(a\in A\) and
            \(x\in N.\)
    \end{enumerate}
    If additionally, \(N\) (and thus \(M\) as well) is commutative, and the action of
    \(M\) on \(A\) is trivial, the extension is called a commutative Schreier extension
    of \(M\) by \(A\).\\

    \noindent If \(\sigma\) is a fibration, the extension is called a regular Schreier
    extension.
    \end{De}

    \part{The affine case}

\section{Definition and first examples}

\subsection{Coextensions}

    We start by recall the definition of group-values coextensions of monoids, as
    introduced in \cite[Definition~3.1]{p15}.

    \begin{De}[Group coextension] \label{def:coext_original}
      Let \(M\) be a monoid and \(\cA\) a system of abelian groups over \(A\). A
      \emph{group coextension} of \(M\) by \(\cA\) is a surjective homomorphism
      \(\pi\colon N \to M\) of monoids, together with actions of \(\cA(m)\) on
      \(\pi^{-1}(m)\) for each \(m\), such that the following two conditions hold:
      \begin{itemize}
        \item[(i)] \emph{(Regularity)} For every \(n, n' \in \pi^{-1}(m)\), there
              exists a unique \(x \in \cA(m)\) with \(n' = x \pt n\).
        \item[(ii)] \emph{(Compatibility)} For all \(x \in \cA(m)\), \(x'\in
              \cA(m')\), \(n \in \pi^{-1}(m)\), \(n' \in \pi^{-1}(n')\), we have
              \[ (x \pt n)(x' \pt n') \;=\; (m'_*(x) + m_*(x')) \pt (nn'). \]
      \end{itemize}
    \end{De}

    The aim of this paper is to generalise the above, and the following is the new
    object of study.

    \begin{De}[Precartesian coextension] \label{def:pcoext}
      Let \(M\) be a monoid and \(\cL\) a system of commutative monoids. A
      \emph{precartesian coextension} of \(M\) by \(\cL\) is a surjective monoid
      homomorphism \(\pi\colon N \to M\), together with actions of \(\cL(m)\) on
      \(\pi^{-1}(m)\) for each \(m \in M\), such that the following conditoins hold:
      \begin{itemize}
        \item[(i)] \emph{(Precartesian condition)} For every \(m \in M\) there
              exists an element \(n_m \in \pi^{-1}(m)\), called \emph{basepoint}, such
              that there is a unique \(x \in \cL(m)\) with \(x \pt n_m = n\) for every
              \(n \in \pi^{-1}(m)\).
        \item[(ii)] \emph{(Compatibility)} For all \(x \in \cL(m)\), \(x' \in
              \cL(m')\), \(n \in \pi^{-1}(m)\), \(n' \in \pi^{-1}(m')\),
              \[
                (x \pt n)(x' \pt n')
                \;=\;
                \bigl(m'_*(x) \cdot m_*(x')\bigr) \pt (nn').
              \]
      \end{itemize}
    \end{De}

    We use the notation \(0 \to \cL \to N \xto{\pi} M \to 0\) to mean a coextension.
    Note also that we used \(\cdot\) as our operation in \(\cL(m)\) instead of \(+\).
    This is because we use multiplication to denote operation in monoids and addition
    to denote the operation in abelian groups, purely out of reasons of convention.

    \begin{Le}
      Let \(0 \to \cL \to N \xto{\pi} M \to 0\) be a precartesian extension and \(m \in
      M\). A point \(n \in \pi^{-1}(m)\) is a basepoint if and only if the map
      \[ \eta_n \colon \cL(m) \to \pi^{-1}(m), \qquad z \mapsto z \pt n, \]
      is bijective.
    \end{Le}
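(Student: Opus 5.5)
This is essentially an unwinding of Definition~\ref{def:pcoext}(i), so the plan is to compare the defining condition of a basepoint with the definition of bijectivity of \(\eta_n\). First I note that \(\eta_n\) is well defined: the action of \(\cL(m)\) is an action on the fibre \(\pi^{-1}(m)\), so \(z \pt n \in \pi^{-1}(m)\) for every \(z\in\cL(m)\).

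For the forward direction, assume \(n\) is a basepoint, i.e.\ \(n\) can serve as \(n_m\) in condition~(i). Then for every \(n'\in\pi^{-1}(m)\) there is some \(x\in\cL(m)\) with \(x\pt n = n'\); this is exactly surjectivity of \(\eta_n\). For injectivity, suppose \(\eta_n(z)=\eta_n(z')\) and put \(n' := z\pt n = z'\pt n\in\pi^{-1}(m)\). Then both \(z\) and \(z'\) satisfy \(x\pt n = n'\), so the uniqueness clause of~(i) forces \(z=z'\).

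For the converse, assume \(\eta_n\) is bijective. Given any \(n'\in\pi^{-1}(m)\), surjectivity provides \(x\in\cL(m)\) with \(x\pt n=n'\), and injectivity gives that this \(x\) is unique. This is precisely the property demanded of \(n_m\) in~(i), so \(n\) is a basepoint.

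No step is a genuine obstacle. The only point requiring attention is interpretive: \enquote{basepoint} has to be read as \enquote{an element of the fibre satisfying the unique-reachability property of~(i)}, not as the particular \(n_m\) chosen in the definition. With that reading, the compatibility condition~(ii) plays no role, and both implications are immediate restatements of one another.
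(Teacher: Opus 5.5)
Your proof is correct and is exactly the unwinding of Definition~\ref{def:pcoext}(i) that the paper leaves implicit; the lemma is stated there without proof. Your reading of ``basepoint'' as any element of the fibre with the unique-reachability property, rather than the one chosen $n_m$, matches how the paper uses the term, for instance in Lemma~\ref{le:basepoints_invertibly_related}.
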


    \begin{Rem}
      Writing \(0 \to \cL \to N \xto{\pi} M \to 0\) is merely notation. In particular,
      \(\cL \to N\) should not be confused with a morphism, as \(\cL\) is not a monoid
      (not even a set for that matter). But the notation is justified in that, when
      \(\cL\) is restricted to a constant system of abelian groups and \(M, N\) are
      abelian groups as well, Proposition~\ref{prop:generalises_group_coext} and our
      work in \cite{p15} show that the above is equivalent to a short-exact sequence in
      the classical sense.
    \end{Rem}

\subsection{Generalising the two main special cases}

    The main goal of this paper is develop a theory that simultaneously generalises
    commutative Schreier extensions and systems of abelian groups developed in
    \cite{p15} (which itself is a generalisation of Grillet's theory). The aim of this
    section is to show that our definition above serves this purpose well.

    \begin{Pro} \label{prop:generalises_group_coext}
      Let \(\cL \in \cHCM(M)\) with \(\cL(m)\) abelian groups for all \(m\in M\). Then
      Definition~\ref{def:pcoext} is equivalent to Definition~\ref{def:coext_original}.
    \end{Pro}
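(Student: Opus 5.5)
We need to show that for a system \(\cL\) of abelian groups, a surjection \(\pi\colon N\to M\) with fibrewise actions is a precartesian coextension exactly when it is a group coextension. The compatibility axioms (ii) of Definition~\ref{def:pcoext} and Definition~\ref{def:coext_original} are literally the same, with the monoid operation \(\cdot\) read as the group operation \(+\). So the plan is to compare only condition (i): the precartesian condition versus regularity, one fibre \(\pi^{-1}(m)\) at a time.

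For regularity \(\Rightarrow\) precartesian, fix \(m\). Since \(\pi\) is surjective, \(\pi^{-1}(m)\) is non-empty, so we may pick any \(n_m\in\pi^{-1}(m)\). Regularity applied to the pair \(n_m, n\) gives, for each \(n\in\pi^{-1}(m)\), a unique \(x\in\cL(m)\) with \(x\pt n_m=n\). Hence \(n_m\) is a basepoint, and in fact every point of the fibre is one.

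For precartesian \(\Rightarrow\) regularity, fix \(m\) and a basepoint \(n_m\). By the preceding lemma, \(\eta_{n_m}\colon\cL(m)\to\pi^{-1}(m)\), \(z\mapsto z\pt n_m\), is a bijection. This bijection is \(\cL(m)\)-equivariant for left multiplication on \(\cL(m)\), because \(y\pt(z\pt n_m)=(yz)\pt n_m\) by the action axiom. The fibre is therefore isomorphic as a \(\cL(m)\)-set to \(\cL(m)\) acting on itself. Since \(\cL(m)\) is a group, left multiplication on itself is simply transitive. Concretely, take \(n=z\pt n_m\) and \(n'=z'\pt n_m\). The equation \(x\pt n=n'\) becomes \((xz)\pt n_m=z'\pt n_m\). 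By injectivity of \(\eta_{n_m}\) this is equivalent to \(xz=z'\), whose unique solution is \(x=z'z^{-1}\). This is exactly regularity.

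I expect no real obstacle here. The only point needing care is the equivariance step: it uses that the \(\cL(m)\)-action on the fibre is a genuine monoid action, i.e.\ \(1\pt n=n\) and \((yz)\pt n=y\pt(z\pt n)\), which is implicit in the phrase ``actions of \(\cL(m)\)''. We should also note that, unlike in the general monoid case, invertibility is what makes every point a basepoint.
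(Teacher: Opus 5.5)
Your proposal is correct and follows essentially the same route as the paper. Both arguments compare only condition (i), fibre by fibre. For regularity \(\Rightarrow\) precartesian, both take an arbitrary point of the non-empty fibre as basepoint. For the converse, both write \(n=z\pt n_m\) and \(n'=z'\pt n_m\) and obtain the unique solution \(x=z'z^{-1}\). Your uniqueness step, which applies injectivity of \(\eta_{n_m}\) directly to \((xz)\pt n_m=z'\pt n_m\), is a slightly cleaner version of the paper's argument.
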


    \begin{proof}
      As the compatibility condition is verbatim the same, we only need to show that
      under this assumptions, the regularity and precartesian conditions agree. We
      start by assuming regularity and choose any \(n_0 \in \pi^{-1}(m)\) as a
      basepoint. By assumption (regularity), we have a unique \(x\) with \(n = x \pt
      n_0\) for every \(n\), implying precartesian.

      On the other hand, let \(n_0\) be a basepoint and choose \(n, n' \in
      \pi^{-1}(m)\) arbitrarily. Let \(x, x'\) be the unique elements of \(\cL(m)\)
      with \(x \pt n_0 = n\) and \(x' \pt n_0 = n'\). By taking \(y = x' \cdot
      x^{-1}\), we get \(y \pt n = (x' \cdot x^{-1}) \pt n = x' \pt (x^{-1} \pt n) = x'
      \pt n_0 = n'\). This \(y\) is unique with that property, since if \(y' \pt n =
      n'\), then \((y'\cdot x) \pt n_0 = y' \pt (x \pt n_0) = y' \pt n = n' = x' \pt
      n_0\). Acting with \(x^{-1}\) on both sides and using commutativity of
      \(\cL(m)\), we get \(y'\pt n_0 = y\pt n_0\). Uniqueness in the precartesian
      assumption now gives us \(y' = y\) and hence, uniqueness in the regularity
      assumption.
    \end{proof}

    \begin{De}
      A system of commutative monoids \(\cL\) on \(M\) is said to be \emph{constant} if
      for every \(m, n\in M\), \(\cL(m) = L\) for a single commutative monoid \(L\) and
      for all \((m, r)\colon m \to mr\), the corresponding map \(r_*\colon L \to L\) is
      the identity.
    \end{De}

    \begin{Pro} \label{prop:generalises_schreier}
      Let \(L\) be a commutative monoid and \(\cL \equiv L\) be the corresponding
      constant system. A precartesian coextension \(0 \to \cL \to N \xto{\pi} M \to 0\)
      is precisely a commutative Schreier extension of \(M\) by \(A\).
    \end{Pro}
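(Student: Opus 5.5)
Both directions go through the fibre over the unit \(1\in M\). For a constant system \(\cL\equiv L\) all maps \(r_*\) are the identity, so compatibility reads \((x\pt n)(x'\pt n') = (xx')\pt(nn')\) for all \(x,x'\in L\). Since \(\pi\) is a homomorphism, \(\pi^{-1}(1)=\ker(\pi)\) is a submonoid of \(N\).

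\emph{From a precartesian coextension to a Schreier extension.} Define \(\iota\colon L\to N\) by \(\iota(x) := x\pt 1\), using \(1\in\pi^{-1}(1)\).

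1. \(\iota\) is a monoid homomorphism: by compatibility with \(n=n'=1\), \(\iota(x)\iota(x') = (xx')\pt 1 = \iota(xx')\).
2. \(\iota\) is a bijection \(L\to\ker(\pi)\). A basepoint \(n_1\) of \(\pi^{-1}(1)\) is a unit: write \(1 = y\pt n_1\), then \(n_1\cdot(y\pt 1) = (y\pt n_1)\cdot 1 = 1\) by compatibility with \(x=1\). Precomposing the bijection \(\eta_{n_1}\) of the earlier lemma with translation by this unit shows that \(1\) is itself a basepoint. Hence \(\iota=\eta_1\) is bijective onto \(\ker(\pi)\).
3. Compatibility with \(x'=1\), \(n=1\) gives \(x\pt n' = (x\pt 1)\,n' = \iota(x)n'\). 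So the action is just multiplication by \(\iota(x)\).
4. \(\pi\) is a prefibration: if \(n_m\) is a basepoint of \(\pi^{-1}(m)\), then every \(z\in\pi^{-1}(m)\) is uniquely \(x\pt n_m=n_m\iota(x)\). Since \(\iota\) is bijective onto \(\ker(\pi)\), this says exactly that \(n_m\) is \(\pi\)-precartesian.
5. The Schreier condition \(\iota(a)x=x\iota(a)\) is automatic because \(N\) is commutative and the action of \(M\) on \(L\) is trivial. Surjectivity of \(\pi\) and \(\pi\iota=1\) are immediate.

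\emph{From a commutative Schreier extension to a precartesian coextension.} Given such an extension with \(A=L\), define \(x\pt n := \iota(x)n\) for \(n\in\pi^{-1}(m)\).

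1. This is an action preserving fibres, since \(\pi\iota(x)=1\) and \(\iota\) is a homomorphism.
2. Compatibility holds because \(\iota\) is a homomorphism and \(N\) is commutative.
3. For the precartesian condition, take any \(\pi\)-precartesian \(n_m\) in the fibre, which exists because \(\pi\) is a prefibration. Each \(z\in\pi^{-1}(m)\) is uniquely \(n_m a\) with \(a\in\ker(\pi)\). Since \(\iota\colon L\to\ker(\pi)\) is bijective, \(z=\iota(x)n_m\) for a unique \(x\in L\).

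Finally I would check that the two constructions are mutually inverse. Starting from a coextension, step 3 of the first direction recovers the action as \(x\pt n=\iota(x)n\). Starting from a Schreier extension, \(x\pt 1=\iota(x)\) recovers \(\iota\).

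The main obstacle is step 2 of the first direction: showing that \(1\) itself is a basepoint, so that \(\iota\) identifies \(L\) with \(\ker(\pi)\) rather than merely mapping into it. The unit argument above settles it. The rest is direct unwinding of the definitions.
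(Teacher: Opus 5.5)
The paper states Proposition~\ref{prop:generalises_schreier} without proof, so there is no argument of the paper's to compare yours with. Your proof is correct and complete, and it is the natural one (reading the \(A\) in the statement as \(L\)).

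There are two small points of presentation. First, in step~2 of the first direction, the relation \(\eta_1(z)=\eta_{n_1}(z)\,n_1^{-1}\) uses the identity \(z\pt n=\iota(z)\,n\), which you only derive in step~3. This is not circular, since step~3 uses compatibility alone. Strictly speaking, you compose \(\eta_{n_1}\) on the target side with the translation \(n\mapsto n\,n_1^{-1}\). That translation maps \(\pi^{-1}(1)\) bijectively onto itself because \(\pi(n_1^{-1})=1\). Second, in the second direction you should record that \(\sigma\) is surjective, as Definition~\ref{def:pcoext} demands. This holds because a prefibration is surjective by definition.

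Your step~2 also proves something the paper only asserts. Convention~\ref{conv:canonical_basepoint} takes \(1_N\) as a basepoint of \(\pi^{-1}(1_M)\) on the grounds that this is ``clear''. Both the subsequent lemma on \(x\mapsto x\pt 1_N\) and the proof of Lemma~\ref{le:split} rely on it. Your unit argument justifies this convention for arbitrary systems, not only constant ones. It invokes compatibility only with \(m=m'=1_M\), where \(1_*=\id\).
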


\subsection{Basic properties of precartesian coextensions}

    Throughout this section, let
    \[ 0 \to \cL \to N \xto{\pi} M \to 0. \]
    be a given precartesian coextension and write \(n_m\in\pi^{-1}(m)\) for the chosen
    basepoint of each fibre. By Convention~\ref{conv:canonical_basepoint},
    \(n_{1_M}=1_N\).

    \begin{Le} \label{le:pi_respects_action}
      For all \(n \in N\) and \(x \in \cL(\pi(n))\), \(\pi(x \pt n) = \pi(n)\).
    \end{Le}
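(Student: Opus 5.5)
This is essentially immediate from how the action is defined in Definition~\ref{def:pcoext}. There, for each \(m \in M\), the monoid \(\cL(m)\) is given an action on the fibre \(\pi^{-1}(m)\), and not on all of \(N\). The expression \(x \pt n\) for \(x \in \cL(\pi(n))\) is therefore, by definition, the action of \(\cL(m)\) with \(m := \pi(n)\) applied to the element \(n \in \pi^{-1}(m)\). An action of a monoid on a set is a map \(\cL(m) \times \pi^{-1}(m) \to \pi^{-1}(m)\), so \(x \pt n \in \pi^{-1}(m)\). This says exactly \(\pi(x \pt n) = m = \pi(n)\).

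The plan is as follows:
\begin{enumerate}
  \item Fix \(n \in N\) and set \(m = \pi(n)\). Then \(n \in \pi^{-1}(m)\), and \(x\) lies in the monoid \(\cL(m)\) that acts on this fibre.
  \item Note that the action map has codomain \(\pi^{-1}(m)\), and conclude.
\end{enumerate}
No appeal to the precartesian condition~(i) or to compatibility~(ii) is needed.

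The only possible obstacle is interpretive. If one insists on reading \(x \pt n\) as an element of \(N\) without remembering its fibre, one can still check the claim through compatibility~(ii). Take \(x' = 1 \in \cL(1)\) and \(n' = 1_N\), which lies in \(\pi^{-1}(1)\), and use that \(1_*\) is the identity and that the unit acts trivially. This recovers \(x \pt n\) as an element of the fibre over \(m \cdot 1 = m\), which is circular but consistent. I would therefore present the one-line argument from the definition of the action, and remark that nothing beyond the typing in Definition~\ref{def:pcoext} is used.
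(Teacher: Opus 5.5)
Your argument is correct and matches the paper, which states this lemma without proof because it follows from the typing in Definition~\ref{def:pcoext}: \(\cL(m)\) acts on the fibre \(\pi^{-1}(m)\), so \(x \pt n \in \pi^{-1}(\pi(n))\). The detour through the compatibility condition adds nothing, as you note, so present only the one-line argument.
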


    \begin{Le} \label{le:basepoints_invertibly_related}
      Let \(n_0, n_0'\) be two basepoints of \(\pi^{-1}(m)\). There is a unique
      invertible \(u \in \cL(m)\) with \(u \pt n_0 = n_0'\). In particular, the set of
      basepoints of \(\pi^{-1}(m)\) is a \(\cL(m)^\times\)-torsor.
    \end{Le}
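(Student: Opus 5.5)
Let \(n_0, n_0'\) be basepoints of \(\pi^{-1}(m)\). I will work only with the action axioms of \(\cL(m)\) on \(\pi^{-1}(m)\), namely \(1\pt n = n\) and \((xy)\pt n = x\pt(y\pt n)\), together with commutativity of \(\cL(m)\) and the uniqueness part of the precartesian condition (i) of Definition~\ref{def:pcoext}. Compatibility plays no role.

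First, since \(n_0\) is a basepoint and \(n_0'\in\pi^{-1}(m)\), there is a unique \(u\in\cL(m)\) with \(u\pt n_0 = n_0'\). Symmetrically, since \(n_0'\) is a basepoint, there is a unique \(v\in\cL(m)\) with \(v\pt n_0' = n_0\). Then
\[ (vu)\pt n_0 \;=\; v\pt(u\pt n_0) \;=\; v\pt n_0' \;=\; n_0 \;=\; 1\pt n_0. \]
Uniqueness in the precartesian condition at \(n_0\), applied to the element \(n_0\) itself, gives \(vu = 1\). By commutativity \(uv=1\) as well, so \(u\in\cL(m)^\times\). (Alternatively, run the same argument at \(n_0'\).) The uniqueness of \(u\) is just the uniqueness clause for the basepoint \(n_0\), so uniqueness holds among all of \(\cL(m)\), not only among invertibles. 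This is the only real step: inverses come for free from having two basepoints acting on each other.

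For the torsor statement, the argument has two parts.

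\begin{itemize}
\item \emph{\(\cL(m)^\times\) preserves basepoints.} Let \(u\) be invertible and \(n_0\) a basepoint. By the lemma characterising basepoints through bijectivity of \(\eta_n\), it suffices to show \(\eta_{u\pt n_0} = \eta_{n_0}\circ(z\mapsto zu)\) is bijective. This holds because multiplication by a unit is a bijection of \(\cL(m)\) and \(\eta_{n_0}\) is bijective.
\item \(\cL(m)^\times\) is a group acting on the set of basepoints. The first paragraph shows the action is transitive. Freeness follows from \(u\pt n_0 = u'\pt n_0 \Rightarrow u=u'\), again by uniqueness at \(n_0\).
\end{itemize}

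The set of basepoints is nonempty by condition (i), so it is a \(\cL(m)^\times\)-torsor.

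I expect no genuine obstacle here. The only point needing care is to derive invertibility from the uniqueness clause applied to \(n_0\) acting on itself, rather than assuming any cancellation in \(\cL(m)\).
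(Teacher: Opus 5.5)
Your proof is correct and follows essentially the same route as the paper. Both obtain \(u\) and \(v\) from the two basepoints and use uniqueness at \(n_0\) to get \(v\cdot u = e_m\); the paper then reruns the argument at \(n_0'\) to get \(u\cdot v = e_m\) where you invoke commutativity, and either works. Your explicit check that \(u\pt n_0\) is again a basepoint via \(\eta_{u\pt n_0} = \eta_{n_0}\circ(z\mapsto zu)\), together with freeness and transitivity, simply fills in what the paper dismisses as ``readily seen''.
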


    \begin{proof}
      By assumption (precartesian), there exist unique \(u, v \in \cL(m)\) with \(u \pt
      n_0 = n_0'\) and \(v \pt n_0' = n_0\). As such,
      \[ (v\cdot u) \pt n_0 \;=\; v \pt (u \pt n_0) \;=\; v \pt n_0' \;=\; n_0. \]
      Since we also have \(e_m \pt n_0 = n_0\), the uniqueness in the precartesian
      condition (applied at the basepoint \(n_0\)) now implies \(v\cdot u = e_m\). By
      symmetry, we als get
      \[ (u\cdot v) \pt n_0' \;=\; u \pt (v \pt n_0') \;=\; u \pt n_0 \;=\; n_0', \]
      and uniqueness at the basepoint \(n_0'\) gives \(u\cdot v = e_m\). Hence \(u\) is
      invertible with inverse \(v\).

      Uniqueness of \(u\) with \(u \pt n_0 = n_0'\) is immediate from the precartesian
      condition at \(n_0\).

      It is also readily seen that if \(n_0\) is a basepoint and \(u\in \cL(m)^\times\)
      invertible, \(u\pt n_0\) is likewise a basepoint.
    \end{proof}

    Clearly, if \(1_N\in \pi^{-1}(m)\) (meaning, \(m=1\)), we can always take \(1_N\)
    to be a basepoint. The above lemma tells us that we can do so more or less
    canonically, and henceforth, we shall do so:

    \begin{Conv} \label{conv:canonical_basepoint}
      We take \(1_N\) to be the canonical basepoint of \(\pi^{-1}(1_M)\).
    \end{Conv}

    \begin{Le}
      \begin{enumerate}
        \item The map \(f\colon \cL(1_M) \to N\), given by \(f(x) = x \pt 1_N\), is an
              injective homomorphism.
        \item If \(\cL(1_M)\) is a group, then \(n \in N^\times\) if and only if
              \(\pi(n) \in M^\times\).
      \end{enumerate}
    \end{Le}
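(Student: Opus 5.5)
For (1) I will derive everything from the compatibility condition of Definition~\ref{def:pcoext} applied at \(m = m' = 1_M\), together with Convention~\ref{conv:canonical_basepoint} (\(1_N\) is the basepoint of \(\pi^{-1}(1_M)\)) and the functoriality \(1_* = \id\). For \(x, x' \in \cL(1_M)\), compatibility with \(n = n' = 1_N\) gives
\[ f(x)f(x') = (x \pt 1_N)(x' \pt 1_N) = \bigl(1_*(x)\cdot 1_*(x')\bigr) \pt (1_N 1_N) = (x\cdot x') \pt 1_N = f(xx'). \]
Since the unit of \(\cL(1_M)\) acts trivially, \(f(e_{1}) = 1_N\). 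By Lemma~\ref{le:pi_respects_action}, \(f\) lands in \(\pi^{-1}(1_M)\subseteq N\). Injectivity is exactly the uniqueness clause of the precartesian condition at the basepoint \(1_N\). Indeed, the map \(f\) is \(\eta_{1_N}\), which is bijective onto \(\pi^{-1}(1_M)\). So \(f\) is in fact an isomorphism \(\cL(1_M)\cong \pi^{-1}(1_M) = \ker(\pi)\), which I will record for use in (2).

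For (2), the direction ``\(n\in N^\times \Rightarrow \pi(n)\in M^\times\)'' is immediate because \(\pi\) is a monoid homomorphism. For the converse, let \(\pi(n) = m\) be invertible with inverse \(m^{-1}\). By surjectivity of \(\pi\), I pick some \(n'\in\pi^{-1}(m^{-1})\). Then \(nn' \in \pi^{-1}(1_M)\), so by (1) \(nn' = f(x) = x\pt 1_N\) for some \(x\in\cL(1_M)\). Since \(\cL(1_M)\) is a group, \(f(x)\) is invertible in \(N\) with inverse \(f(x^{-1})\), because \(f\) is a homomorphism. Hence \(n\cdot\bigl(n' f(x^{-1})\bigr) = 1_N\). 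Commutativity of \(N\) (Convention on monoids) then makes \(n\) invertible.

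The only subtle point is the bijectivity of \(f\) onto the fibre, and it is where the precartesian condition genuinely enters: existence is needed to write \(nn'\) as \(f(x)\). I also need to make sure the unit of \(\cL(1_M)\) acts as the identity; I take this to be part of the meaning of ``action'' of a monoid. With these in place, the argument is routine bookkeeping.
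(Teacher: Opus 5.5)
Your proof is correct and follows the paper's argument. You use compatibility at $n=n'=1_N$ for the homomorphism property and uniqueness at the basepoint $1_N$ for injectivity. In (2) you write $nn'=f(x)$ for a lift $n'$ of $\pi(n)^{-1}$ and use that $f$ preserves invertibles. You are slightly more careful than the paper: you check that $f$ sends the unit of $\cL(1_M)$ to $1_N$, which is needed for $f(x)f(x^{-1})=1_N$, and you also record the trivial direction $n\in N^\times\Rightarrow\pi(n)\in M^\times$.
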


    \begin{proof}
      (1) We use the compatibility condition with \(n = n' = 1_N\) to get
      \begin{eqnarray*}
        f(x) f(x') & = & (x \pt 1_N)\cdot (x' \pt 1_N)                        \\
                   & = & ((1_M)_*(x)) \cdot ((1_M)_*(x')) \pt (1_N \cdot 1_N) \\
                   & = & (x \cdot x') \pt (1_N)                               \\
                   & = & f(xy).
      \end{eqnarray*}
      This implies that \(f\) is a monoid homomorphism.

      To see injectivity, assume \(f(x) = f(x')\), meaning \(x \pt 1_N = x' \pt 1_N\).
      The uniqueness property of the precartesian condition then implies \(x = x'\).

      \medskip

      (2) Let \(\pi(n) \in M^\times\) be invertible and \(n'\in
      \pi^{-1}(\pi(n)^{-1})\). It follows that \(nn'\in \pi^{-1}(1_M)\), implying the
      existence of a unique \(x \in \cL(1_M)\) with \(nn' = x \pt 1_N = f(x)\). But as
      \(\cL(1_M)\) is a group, every element (including \(x\)) is invertible, and a
      morphism maps invertibles to invertibles. This makes \(f(x)\) and as such \(n\)
      invertible.
    \end{proof}

\subsection{Morphisms}

    \begin{De} \label{def:morphism_precoext}
      A \emph{precartesian morphism}, or simply \emph{morphism} \(\cE \to \cE'\) of
      precartesian coextensions over (possibly different) bases is a triple \((\lambda,
      h, \varphi)\),
      \[
        \xymatrix{
          \cE \;=\; \ar@<-1ex>[d] & 0 \ar[r]                 & \cL \ar[r]\ar@{=>}[d]^{\lambda} & N
          \ar[r]^{\pi}\ar[d]^{h}  & M \ar[r]\ar[d]^{\varphi} & 0 \\
          \phantom{'}\cE' \;=\;   & 0 \ar[r]                 & \cL' \ar[r]                     & N' \ar[r]^{\pi'} & M' \ar[r]
                                  & 0
        }
      \]
      where \(h, \varphi\) are monoid homomorphisms and \(\lambda\colon \cL \to
      \varphi^*\cL'\) is a morphism in \(\cHCM(M)\) (meaning natural transformation),
      satisfying
      \[
        \varphi \circ \pi \;=\; \pi' \circ h \,\qqtext{and}\, (h(x \pt n) \;=\;
        \lambda(x) \pt h(n)
      \]
      for all \(n\in N\), \(x \in \cL(m)\) for every \(m\in M\). Moreover, \(h\) sends
      basepoints to basepoints, meaning for every \(m \in M\) and every basepoint \(n_0
      \in \pi^{-1}(m)\), \(h(n_0)\) is a basepoint of \(\pi'^{-1}(\varphi(m))\).
    \end{De}

    We will denote the category of precartesian coextensions of \(M\) by \(\Pcoex(M)\),
    with morphisms as in Definition~\ref{def:morphism_precoext}. In the case when we
    have a fixed system \(\cL\), we will use the notation \(\Pcoex(M, \cL)\). The
    morphisms here are \((\id_\cL, h, \id_M)\).

\section{First results} \label{sec:first-results}

\subsection{Split precartesian coextensions} \label{sec:split-coextension}

    \begin{De}
      A precartesian coextension \emph{splits} (or is \emph{split}) if there exists a
      cleavage monoid homomorphism \(\mu\colon M \to N\) with \(\pi \circ \mu =
      \id_M\).
    \end{De}

    To clarify, by cleavage monoid homomorphism we mean a cleavage that is additionally
    a monoid homomorphism.

    The semidirect product \(\cL \rtimes M\), with \(\pi(x,a) = a\) and \(z \pt (x,a)
    := (z \cdot x,a) \) (see Definition~\ref{def:semidirect}), is split precartesian
    coextension, with basepoint of the fibre over \(a\) being \(( e_a,a)\) and the
    action given by \(z \pt (e_a,a) = ( z,a)\). This is the canonical such coextension.

    \begin{De}
      A \emph{derivation} \(\partial\colon M \to \cL\) is a ``function'' with
      \(\partial(m) \in \cL(m)\) satisfying
      \[ \partial(mm') \;=\; m'_*(\partial(m)) \cdot m_*(\partial(m')) \]
      for all \(m, m' \in M\).
    \end{De}

    \begin{De}
      Let \(\pi\colon M\to N\) be a monoid homomorphism and \(\partial\colon M \to
      \cL\) a derivation. We call \(\partial\) a \emph{cartesian derivation} or simply
      \emph{cartesian} if it maps basepoint elements to invertible elements.
    \end{De}

    See Remark~\ref{rem:predecart_cart} for why we call it cartesian and not
    precartesian.

    \medskip

    The following lemma, in conjunction with Definition~\ref{def:cartesian_map} makes
    the definition of cartesian derivation clearer:

    \begin{Le} \label{le:semidirect_cartesian=invertible}
      Let \(\pi\colon \cL \rtimes M \to M\) be the split precartesian coextension. An
      element \((x,m)\in\pi^{-1}(m)\) is a basepoint of \(\pi^{-1}(m)\), in the sense
      of Definition~\ref{def:pcoext}(i), if and only if \(x\in\cL(m)^\times\).
    \end{Le}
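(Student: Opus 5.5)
The plan is to compute the action map \(\eta_{(x,m)}\) explicitly and then use the earlier lemma that \(n\in\pi^{-1}(m)\) is a basepoint if and only if \(\eta_n\colon \cL(m)\to\pi^{-1}(m)\) is bijective.

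First, I identify the fibre. By definition of the semidirect product, \(\pi^{-1}(m)=\{(y,m)\mid y\in\cL(m)\}\), and \(y\mapsto (y,m)\) is a bijection \(\cL(m)\to\pi^{-1}(m)\). The action is \(z\pt(x,m)=(z\cdot x,m)\). Under this identification, \(\eta_{(x,m)}\) is simply right multiplication by \(x\) in the commutative monoid \(\cL(m)\), that is, \(z\mapsto zx\). So the statement reduces to a purely monoid-theoretic claim: in a commutative monoid \(L\), multiplication by \(x\) is a bijection \(L\to L\) if and only if \(x\in L^\times\).

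For the direction where \(x\) is invertible, the map \(z\mapsto zx^{-1}\) is a two-sided inverse, so \(\eta_{(x,m)}\) is bijective. Alternatively, one can note that \((e_m,m)\) is a basepoint, since \(\eta_{(e_m,m)}\) is the identity, and then apply the last assertion of Lemma~\ref{le:basepoints_invertibly_related}: \(x\pt(e_m,m)=(x,m)\) is again a basepoint.

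For the converse, suppose \((x,m)\) is a basepoint. Surjectivity of \(\eta_{(x,m)}\) gives some \(z\in\cL(m)\) with \(zx=e_m\). By commutativity, \(xz=e_m\) as well, so \(x\in\cL(m)^\times\). Another route is to apply Lemma~\ref{le:basepoints_invertibly_related} to the two basepoints \((e_m,m)\) and \((x,m)\): it produces a unique invertible \(u\) with \(u\pt(e_m,m)=(x,m)\), hence \(u=x\).

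No step is a genuine obstacle. The only points needing care are, first, confirming that the action on \(\cL\rtimes M\) is really \(z\pt(x,a)=(zx,a)\), so that the fibre is identified with \(\cL(m)\) compatibly with the action. Second, one should remember that surjectivity alone already yields invertibility because \(\cL(m)\) is commutative. Injectivity is then automatic.
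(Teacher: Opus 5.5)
Your proof is correct. Your main argument is a direct computation: using the bijectivity criterion for basepoints, you identify \(\eta_{(x,m)}\) with multiplication by \(x\) on \(\cL(m)\), and you observe that in a commutative monoid surjectivity of this map already forces \(x\in\cL(m)^\times\). The paper instead uses exactly what you list as your ``alternative route''. It takes \((e_m,m)\) as a basepoint and invokes Lemma~\ref{le:basepoints_invertibly_related} to conclude that the basepoints of \(\pi^{-1}(m)\) are precisely the \(\cL(m)^\times\)-orbit \(\{(u,m)\mid u\in\cL(m)^\times\}\) of \((e_m,m)\). Your version is self-contained and shows that only surjectivity of \(\eta_{(x,m)}\) is needed. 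The paper's version is shorter because it reuses the torsor lemma, and that description of basepoints as a unit-orbit of a chosen cleavage is what it immediately restates for arbitrary coextensions in Lemma~\ref{le:cartesian_criterion}.
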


    \begin{proof}
      Let \(\mu_0\colon M\to \cL \rtimes M \), \(\mu_0(m):=(e_m,m)\) be the canonical
      cleavage of \(\pi\). By Lemma~\ref{le:basepoints_invertibly_related}, the set of
      basepoints of \(\pi^{-1}(m)\) is precisely the \(\cL(m)^\times\)-orbit of
      \(\mu_0(m)\) under the canonical action \(u \pt (e_m,m) = ( u\cdot e_m, \, m) =
      (u,m)\). Hence \((x,m)\) is a basepoint if and only if \(x=u\) for some
      \(u\in\cL(m)^\times\).
    \end{proof}

    \begin{Rem}
      Henceforth, calling an element \(n\) of a coextension \(\pi\colon N\to M\)
      \emph{precartesian} means that \(n\) is a basepoint of its fibre
      \(\pi^{-1}(\pi(n))\) in the sense of Definition~\ref{def:pcoext}(i). Note also
      that we simply say precartesian and not \(\pi\)-precartesian.
    \end{Rem}

    We mention that the trivial map \(m \mapsto e_m\), where \(e_m\) is the unit of the
    group \(\cL(m)\) is always a derivation.

    \begin{Le} \label{le:split}
      The following are equivalent:
      \begin{enumerate}
        \item A precartesian coextension is split.
        \item There is an isomorphism of precartesian coextensions \((\id_\cL, h,
              \id_M)\) with \(h\colon N \xto{\sim} \cL \rtimes M\)
        \item There is a cartesian derivation \(\partial\colon N \to \pi^*\cL\) with
              \(\partial(x \pt n) = x \cdot \partial(n)\).
      \end{enumerate}
    \end{Le}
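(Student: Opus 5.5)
I will prove the cycle \((1)\Rightarrow(2)\Rightarrow(3)\Rightarrow(1)\), keeping all constructions explicit in terms of the action \(\pt\) and the precartesian condition of Definition~\ref{def:pcoext}.

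\emph{\((1)\Rightarrow(2)\).} Let \(\mu\colon M\to N\) be a cleavage monoid homomorphism. Each \(\mu(m)\) is a basepoint of \(\pi^{-1}(m)\), so every \(n\in N\) can be written uniquely as \(n=x\pt\mu(\pi(n))\) with \(x\in\cL(\pi(n))\). I define \(h(n):=(x,\pi(n))\); this is a bijection onto \(\cL\rtimes M\). For multiplicativity, I apply the compatibility condition together with \(\mu(m)\mu(m')=\mu(mm')\):
\[
  (x\pt\mu(m))(x'\pt\mu(m')) \;=\; \bigl(m'_*(x)\cdot m_*(x')\bigr)\pt\mu(mm'),
\]
which is exactly the product rule of Definition~\ref{def:semidirect}. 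The unit goes to the unit because \(\mu(1)=1_N\) and \(1_N=e_1\pt 1_N\). Equivariance \(h(z\pt n)=(z\cdot x,\pi(n))\) follows from associativity of the action. For basepoints, Lemma~\ref{le:basepoints_invertibly_related} says the basepoints over \(m\) are the \(u\pt\mu(m)\) with \(u\in\cL(m)^\times\). These map to \((u,m)\), which are basepoints by Lemma~\ref{le:semidirect_cartesian=invertible}. The inverse map \((x,m)\mapsto x\pt\mu(m)\) also sends basepoints to basepoints by the same two lemmas, so \((\id_\cL,h,\id_M)\) is an isomorphism.

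\emph{\((2)\Rightarrow(3)\).} Write \(h(n)=(\partial(n),\pi(n))\) and take this first coordinate as \(\partial\colon N\to\pi^*\cL\), so that \(\partial(n)\in\cL(\pi(n))\). Since \(h\) is a homomorphism, the first coordinate of the product rule gives \(\partial(nn')=\pi(n')_*(\partial(n))\cdot\pi(n)_*(\partial(n'))\), which is the derivation identity for \(\pi^*\cL\). Equivariance of \(h\) gives \(\partial(x\pt n)=x\cdot\partial(n)\). Because \(h\) preserves basepoints, Lemma~\ref{le:semidirect_cartesian=invertible} shows \(\partial\) sends basepoints to invertible elements, so \(\partial\) is cartesian.

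\emph{\((3)\Rightarrow(1)\).} This is the step I expect to be the main obstacle, since a section has to be built from \(\partial\). For each \(m\), choose a basepoint \(n_m\); then \(u_m:=\partial(n_m)\) is invertible, and I set \(\mu(m):=u_m^{-1}\pt n_m\). This is again a basepoint by Lemma~\ref{le:basepoints_invertibly_related}, and it satisfies \(\partial(\mu(m))=e_m\). It does not depend on the choice of \(n_m\): replacing \(n_m\) by \(v\pt n_m\) with \(v\) invertible multiplies \(u_m\) by \(v\), and the two factors cancel.

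The key observation is that \(\partial\) is injective on each fibre. Every element of \(\pi^{-1}(m)\) is \(x\pt\mu(m)\) for a unique \(x\), and \(\partial(x\pt\mu(m))=x\cdot e_m=x\). Hence \(\mu(m)\) is the unique element of \(\pi^{-1}(m)\) with \(\partial=e_m\).

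It remains to check that \(\mu\) is multiplicative. The element \(\mu(m)\mu(m')\) lies in \(\pi^{-1}(mm')\), and the derivation rule gives
\[
  \partial(\mu(m)\mu(m'))=m'_*(e_m)\cdot m_*(e_{m'})=e_{mm'},
\]
since the maps \(r_*\) are monoid homomorphisms and so preserve units. By uniqueness, \(\mu(m)\mu(m')=\mu(mm')\). The same argument gives \(\mu(1)=1_N\), because \(\partial(1_N)=e_1\) follows from the derivation identity applied to \(1_N\cdot1_N\) (the basepoint \(1_N\) has an invertible \(\partial\)-value, which can then be cancelled). So \(\mu\) is a cleavage monoid homomorphism, and the coextension splits.
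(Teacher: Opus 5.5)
Your proof is correct, but you run the cycle in the opposite direction to the paper, which proves $(1)\Rightarrow(3)\Rightarrow(2)\Rightarrow(1)$. The underlying data are the same in both arguments. $\partial$ is the coordinate of $n$ relative to the section, $h=(\partial,\pi)$, and the splitting is the unique point of each fibre where $\partial$ takes the value $e_m$. The paper obtains that point as $h^{-1}\circ\mu_0$ with $\mu_0(m)=(e_m,m)$; you obtain it directly as $\partial(n_m)^{-1}\pt n_m$.

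What differs is where the work sits. In the paper, $(3)\Rightarrow(2)$ must show that $(\partial,\pi)$ is bijective, via the fibrewise map $x\mapsto x\cdot\partial(n_m)$, and that it preserves basepoints. The basepoint check there appeals to a section $\mu$ that is not available in that direction; it should instead come straight from cartesianness of $\partial$.

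You instead put the bijectivity and the two-sided basepoint checks into $(1)\Rightarrow(2)$, where $\mu$ is at hand, so Lemmas~\ref{le:basepoints_invertibly_related} and~\ref{le:semidirect_cartesian=invertible} settle them at once. This makes $(2)\Rightarrow(3)$ pure unpacking. Your $(3)\Rightarrow(1)$ also gives an intrinsic description of the splitting, and multiplicativity follows from the derivation identity alone.

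Both routes need the same normalisation $\partial(1_N)=e_{1_M}$. It is obtained by cancelling the invertible element $\partial(1_N)$ in $\partial(1_N)=\partial(1_N)^2$, which uses that $1_N$ is a basepoint.
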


    \begin{proof}
      \((1)\Rightarrow(3)\). Let \(\mu\colon M \to N\) be the splitting. By assumption
      (cleavage), we know that \(\mu(\pi(n))\) is a basepoint and hence, there exists
      an element, formally denoted \(\partial(n) \in \cL(\pi(n))\) such that
      \[ \partial(n)\pt\mu(\pi(n)) \;=\; n. \]
      Define \(\partial\colon N \to \pi^*\cL\) through \(n \mapsto \partial(n)\). We
      have to check three things: That \(\partial\) is a derivation, that \(\partial\)
      satisfies respects the action and that \(\partial\) is cartesian.

      \medskip\noindent\emph{Derivation:} Let \(n,n'\in N\), and \(m=\pi(n)\),
      \(m'=\pi(n')\). Since \(\mu\) is a monoid homomorphism,
      \(\mu(mm')=\mu(m)\mu(m')\). The compatibility condition now gives us
      \begin{align*}
        nn' & = \bigl(\partial(n)\pt\mu(m)\bigr)\bigl(\partial(n')\pt\mu(m')\bigr)               \\
            & = \bigl(m'_*(\partial(n))\cdot m_*(\partial(n'))\bigr)\pt\bigl(\mu(m)\mu(m')\bigr) \\
            & = \bigl(m'_*(\partial(n))\cdot m_*(\partial(n'))\bigr)\pt\mu(mm').
      \end{align*}
      On the other hand, by definition, \(\partial(nn') \in \cL(mm')\) is the unique
      element of satisfying \(\partial(nn')\pt\mu(mm')=nn'\). By uniqueness,
      \[ \partial(nn') \;=\; m'_*(\partial(n))\cdot m_*(\partial(n')). \]
      This is the derivation identity.

      \medskip\noindent\emph{Action:} By definition of \(\partial\) and
      Lemma~\ref{le:pi_respects_action}, \(\partial(x\bullet n)\) is the unique element
      satisfying \(\partial(x\pt n)\pt\mu(\pi(n)) = x\pt n\). On the other hand,
      \[
        (x\cdot\partial(n))\pt\mu(m) \;=\; x\pt\bigl(\partial(n)\pt\mu(m)\bigr) \;=\;
        x\pt n.
      \]
      Equating these gives us
      \[ \partial(x\pt n)\pt\mu(\pi(n)) \;=\; (x\cdot\partial(n))\pt\mu(m). \]
      Uniqueness now implies \(\partial(x\pt n)=x\cdot\partial(n)\), as desired.

      Lastly, we have to verify that it is cartesian. But this is just
      Lemma~\ref{le:basepoints_invertibly_related}, since if we start out with a
      cartesian element and return via \(\mu\circ \pi\) to a new basepoint, the
      connecting element \(\partial(n)\in \cL(n)\) must be a unit.

      \medskip\noindent\((3)\Rightarrow(2)\):
      Define the map
      \[
        h\colon N\longrightarrow \cL \rtimes M,
        \qquad h(n) \;:=\; (\partial(n) ,\,\pi(n)).
      \]
      We claim that this is the desired isomorphism. We start by showing that it is a
      homomorphism. Let \(n,n'\in N\) and \(m=\pi(n)\), \(m'=\pi(n')\). We have
      \begin{align*}
        h(n)\cdot h(n') & = \bigl(\partial(n),\pi(n)\bigr)\cdot\bigl(\partial(n'), \pi(n')\bigr) \\
                        & = \bigl(m_*(\partial(n'))\cdot m'_*(\partial(n), \pi(n)\pi(n')\bigr)   \\
                        & = \bigl(\partial(nn'), \pi(nn')\bigr)                                  \\
                        & = h(nn'),
      \end{align*}
      where the second equality is the product identity of the semi-direct product and
      the third equality follows from \(\partial\) begin a derivation.

      We also have \(h(1) = ( \partial(1), \pi(1)) = ( \partial(1 \cdot 1), 1)\). Using
      the derivation identity with \(n=n'=1_N\) gives us \(\partial(1 \cdot
      1)=\partial(1)^2\). Since \(1_N\) is the canonical basepoint, hence precartesian,
      of \(\pi^{-1}(1_M)\), and \(\partial\) is cartesian, \(\partial(1)\) is
      invertible in \(\cL(1_M)\). Multiplying \(\partial(1)=\partial(1)^2\) by
      \(\partial(1)^{-1}\) gives \(\partial(1) = 1\), and we are done.

      To see that it is an isomorphism, we will show that it is bijective on each
      fibre. We have the bijection \(\pi^{-1}(m) \simeq \cL(m)\) due to the
      precartesianness, given by \(\pi^{-1}(m)\ni n \mapsto x\) for the unique \(x\)
      satisfying \(n = x \bullet n_m\). Hence, we only need to show that the restricted
      \(h|_{\pi^{-1}(a)}: \cL(m) \to \cL(m) \times \{m\}\) is a bijection under this
      identification. But this is given by \(x\mapsto x \cdot \partial(n_m)\). Since
      \(\partial(n_m)\) is invertible, we are done.

      Lastly, we have to show that \((\id_\cL,h,\id_M)\) is a morphism of precartesian
      coextensions in the sense of Definition~\ref{def:morphism_precoext}, meaning in
      particular that \(h\) respects basepoints: indeed \(h\) sends the basepoint
      \(\mu(m)\) of \(\pi^{-1}(m)\) to \(( \partial(\mu(m)),m) = ( e_m,m)\), the
      basepoint of \( \cL \rtimes M\) over \(m\), using \(\partial(\mu(m))=e_m\) (which
      follows since \(\mu(m) = e_m \pt \mu(m)\) and uniqueness of \(\partial\)). This
      can be readily verified in full and we skip the remaining details.

      \medskip\noindent\((2)\Rightarrow(1)\).
      The semidirect product \(\cL \rtimes M\) admits the canonical cleavage monoid
      homomorphism
      \[ \mu_0\colon M\longrightarrow \cL\rtimes M,\qquad\mu_0(m) \;:=\; (e_m,m). \]
      It is readily checked that this is a monoid homomorphism. Defining \(\mu :=
      h^{-1}\circ\mu_0\colon M\to N\), we claim that we get our desired splitting. That
      this is a splitting (monoid homomorphism + \(\pi\circ\mu=\pi\circ
      h^{-1}\circ\mu_0\)) is clear. The only thing to verify is that it is a cleavage.
      By Lemma~\ref{le:semidirect_cartesian=invertible}, \(\mu_0\) is clearly a
      cleavage, since the unit is always invertible. But isomorphisms respect all
      algebraic structures and in particular, precartesianness. Hence, \(\mu\) is also
      a cleavage and we are done.
    \end{proof}

\subsection{Basepoints under morphisms}

    We can reformulate Lemma~\ref{le:basepoints_invertibly_related} to strengthen
    Lemma~\ref{le:semidirect_cartesian=invertible} as follows:

    \begin{Le} \label{le:cartesian_criterion}
      Let \( 0 \to \cL \to N \xto{\pi} M \to 0 \) be a precartesian coextension and
      \(\kappa: M\to N\) a cleavage of \(\pi\). For any \(m\in M\) the set of
      basepoints in \(\pi^{-1}(m)\) is equal to
      \[ \{u \pt \kappa(m) \mid u \in \cL(m)^\times \} \]
      In particular, \(\Pcar(\cL \rtimes M) = \cL^\times\) as \(M\)-graded sets and,
      more generally, for a coextension
      \[ 0 \to \cL \to {}_fM \xto{\pi} M \to 0, \]
      the set of basepoints in \(\pi^{-1}(m)\) equals to
      \[ \{ (x,a) \mid x \in \cL(a)^\times \}. \]
    \end{Le}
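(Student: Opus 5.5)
The plan is to derive the main claim directly from Lemma~\ref{le:basepoints_invertibly_related} and then specialise. Fix \(m\in M\). By definition of a cleavage, \(\kappa(m)\) is a basepoint of \(\pi^{-1}(m)\) in the sense of Definition~\ref{def:pcoext}(i).

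For the inclusion of the set of basepoints into \(\{u\pt\kappa(m)\mid u\in\cL(m)^\times\}\), let \(n_0\) be any basepoint of \(\pi^{-1}(m)\). Lemma~\ref{le:basepoints_invertibly_related}, applied to the two basepoints \(\kappa(m)\) and \(n_0\), gives a unique invertible \(u\in\cL(m)\) with \(u\pt\kappa(m)=n_0\).

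For the reverse inclusion, take \(u\in\cL(m)^\times\) and put \(n_0=u\pt\kappa(m)\). I will check that \(\eta_{n_0}\colon z\mapsto z\pt n_0\) is bijective and then use the criterion lemma (a point is a basepoint if and only if \(\eta_n\) is bijective). Since the action is a monoid action, \(\eta_{n_0}(z)=(z\cdot u)\pt\kappa(m)\). Thus \(\eta_{n_0}=\eta_{\kappa(m)}\circ(-\cdot u)\). Here \(\eta_{\kappa(m)}\) is bijective because \(\kappa(m)\) is a basepoint, and multiplication by \(u\) is a bijection of \(\cL(m)\) with inverse multiplication by \(u^{-1}\). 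Hence \(\eta_{n_0}\) is bijective. This is exactly the remark closing the proof of Lemma~\ref{le:basepoints_invertibly_related}, now made explicit.

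For the semidirect product, I take the canonical cleavage \(\kappa=\mu_0\), \(\mu_0(m)=(e_m,m)\). It is a cleavage because \(\eta_{(e_m,m)}(z)=(z,m)\) is visibly bijective onto \(\pi^{-1}(m)=\cL(m)\times\{m\}\). Then \(u\pt(e_m,m)=(u,m)\), so the basepoints over \(m\) are exactly \(\{(u,m)\mid u\in\cL(m)^\times\}\). Gathered over all \(m\), this is the \(M\)-graded identification \(\Pcar(\cL\rtimes M)=\cL^\times\), recovering Lemma~\ref{le:semidirect_cartesian=invertible}.

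The last assertion, about \({}_fM\), is the main obstacle. The notation \({}_fM\) has not yet been defined at this point in the paper. I will read it as the coextension whose underlying set is \(\{(x,a)\mid a\in M,\ x\in\cL(a)\}\), with multiplication twisted by a cocycle \(f\) and with the action \(z\pt(x,a)=(z\cdot x,a)\). Under that reading, the pairs \((e_a,a)\) again form a cleavage: the action on the first coordinate is just multiplication in \(\cL(a)\), so \(\eta_{(e_a,a)}\) is the identity of \(\cL(a)\) under the evident identification. The argument of the previous paragraph then applies verbatim, and the twist \(f\) never enters. The step to double-check is therefore only that the action in \({}_fM\) is untwisted in the first coordinate, which I expect to hold by construction.
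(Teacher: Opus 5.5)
Your proof is correct and follows the paper's route. The paper presents this lemma simply as a reformulation of Lemma~\ref{le:basepoints_invertibly_related} (the basepoints of a fibre form an \(\cL(m)^\times\)-torsor), with the special cases read off from the canonical basepoints \((e_m,m)\), as in the proof of Lemma~\ref{le:semidirect_cartesian=invertible}; your reading of \({}_fM\) also matches its later definition in Lemma~\ref{le:cocycle_extension}, where the action is \(z\pt(x,a)=(z\cdot x,a)\) and \((e_a,a)\) is shown to be a basepoint, so the twist \(f\) indeed plays no role.
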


    Observe also that if \(\varphi\) is injective and we have chosen a cleavage
    \(\kappa\) in the diagram below, we can choose a cleave \(\kappa'\) such that the
    diagram commutes
    \[
      \xymatrix{
        0 \ar[r]                                       & \cL \ar[r]\ar@{=>}[d]^{\lambda} & N \ar[r]^{\pi}\ar[d]^{h} & M
        \ar[r] \ar[d]^{\varphi} \ar@/_1em/[l]_{\kappa} & 0 \\
        0 \ar[r]                                       & \cL' \ar[r]                     & N' \ar[r]^{\pi'}         & M' \ar[r] \ar@/^1em/[l]^{\kappa'} &
        0.
      }
    \]
    Hence
    \begin{equation*} \label{eq:precartesian_kappa}
      h(\kappa(m)) = \kappa'(\varphi(m))
    \end{equation*}
    for all \(m\in M\).

    \begin{Le}[Short Five Lemma] \label{lem:short5_precart}
      Let \((\lambda, h, \varphi) \colon \cE \to \cE'\) be a morphism of precartesian
      coextensions
      \[
        \xymatrix{
          0 \ar[r]                & \cL \ar[r]\ar@{=>}[d]^{\lambda} & N \ar[r]^{\pi}\ar[d]^{h} & M
          \ar[r] \ar[d]^{\varphi} & 0 \\
          0 \ar[r]                & \cL' \ar[r]                     & N' \ar[r]^{\pi'}         & M' \ar[r] &
          0.
        }
      \]
      \begin{enumerate}
        \item If \(\varphi\) and \(\lambda\) are injective, then \(h\) is injective.
        \item If \(\varphi\) and \(\lambda\) are isomorphisms, then \(h\) is an
              isomorphism.
      \end{enumerate}
    \end{Le}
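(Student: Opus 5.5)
Both parts rest on writing elements in basepoint coordinates. Fix a cleavage \(\kappa\colon M\to N\). Every \(n\in N\) with \(\pi(n)=m\) can then be written uniquely as \(n = x\pt\kappa(m)\) with \(x\in\cL(m)\) (Definition~\ref{def:pcoext}(i)). By the morphism axioms,
\[ h(x\pt\kappa(m)) \;=\; \lambda(x)\pt h(\kappa(m)), \]
and \(h(\kappa(m))\) is a basepoint of \(\pi'^{-1}(\varphi(m))\) because morphisms send basepoints to basepoints. So \(h\) acts fibrewise, mapping \(\pi^{-1}(m)\) into \(\pi'^{-1}(\varphi(m))\). In the coordinates given by \(\kappa(m)\) on the source and by the basepoint \(h(\kappa(m))\) on the target, this fibre map is exactly \(\lambda(m)\colon\cL(m)\to\cL'(\varphi(m))\). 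Note that we do not even need the commuting cleavage \(\kappa'\) from the observation before the lemma; we only use the basepoint \(h(\kappa(m))\) as the reference point in the target fibre.

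\textbf{(1) Injectivity.} Suppose \(h(n_1)=h(n_2)\) with \(\pi(n_i)=m_i\). Applying \(\pi'\) gives \(\varphi(m_1)=\varphi(m_2)\), so \(m_1=m_2=:m\) by injectivity of \(\varphi\). Write \(n_i=x_i\pt\kappa(m)\). Then
\[ \lambda(x_1)\pt h(\kappa(m)) \;=\; \lambda(x_2)\pt h(\kappa(m)). \]
The uniqueness clause at the basepoint \(h(\kappa(m))\) gives \(\lambda(x_1)=\lambda(x_2)\), and injectivity of \(\lambda(m)\) gives \(x_1=x_2\). Hence \(n_1=n_2\).

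\textbf{(2) Isomorphism.} Injectivity follows from (1), so only surjectivity remains. Take \(n'\in N'\) and set \(m'=\pi'(n')\). Since \(\varphi\) is surjective, \(m'=\varphi(m)\) for some \(m\). Because \(h(\kappa(m))\) is a basepoint of \(\pi'^{-1}(m')\), we can write \(n'=y\pt h(\kappa(m))\) for a unique \(y\in\cL'(m')\). Since \(\lambda(m)\) is bijective, \(y=\lambda(x)\) for some \(x\in\cL(m)\), and then \(n'=h(x\pt\kappa(m))\). So \(h\) is a bijective monoid homomorphism, hence an isomorphism of monoids.

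For \((\lambda^{-1},h^{-1},\varphi^{-1})\) to be a morphism of coextensions, one should also check that \(h^{-1}\) respects the actions and preserves basepoints. Compatibility with the action follows by applying \(h^{-1}\) to \(h(x\pt n)=\lambda(x)\pt h(n)\). The main subtle point is that \(h^{-1}\) sends basepoints to basepoints. Every basepoint of \(\pi'^{-1}(m')\) has the form \(u\pt h(\kappa(m))\) with \(u\) invertible, by Lemma~\ref{le:basepoints_invertibly_related}. Since \(\lambda\) is an isomorphism, it restricts to an isomorphism on unit groups, so \(u=\lambda(v)\) with \(v\in\cL(m)^\times\). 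Then \(h^{-1}\) of this basepoint is \(v\pt\kappa(m)\), which is a basepoint by Lemma~\ref{le:cartesian_criterion}.
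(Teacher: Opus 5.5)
Your proof is correct and follows the paper's argument: both reduce \(h\) fibrewise to \(\lambda\), using that \(h\) carries a basepoint of \(\pi^{-1}(m)\) to a basepoint of \(\pi'^{-1}(\varphi(m))\), and your part (1) is essentially the paper's. In (2) you use \(h(\kappa(m))\) directly as the reference basepoint, which avoids the paper's detour of picking an arbitrary basepoint \(n_0'\) and correcting by an invertible \(y'\) with \(h(n_0) = y' \pt n_0'\); you also check that \(h^{-1}\) respects the actions and basepoints, which the paper leaves implicit but which Corollary~\ref{cor:Pcoex_groupoid} actually requires.
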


    \begin{proof}
      (1) Assume \(h(n) = h(n')\). Then \(\pi'(h(n)) = \pi'(h(n')) \;\Leftrightarrow\;
      \varphi(\pi(n)) = \varphi(\pi(n'))\) and thus, the injectivity of \(\varphi\)
      gives us \(\pi(n) = \pi(n')\). Let us call this element \(m\in M\) and denote the
      basepoint of \(\pi^{-1}(m)\) by \(n_0\). This makes \(h(n_0)\) a basepoint of
      \(\cE'\), since \(h\) respects basepoints by
      Definition~\ref{def:morphism_precoext}. Let \(x, y\) be the unique elements
      satisfying \(x \pt n_0 = n\), \(y \pt n_0 = n'\). It follows that
      \[ \lambda(x) \pt h(n_0) \;=\; h(n) \;=\; h(n') \;=\; \lambda(y) \pt h(n_0), \]
      and thus, \(\lambda(x) = \lambda(y)\) by uniqueness (using that \(h(n_0)\) is a
      basepoint). It follows that \(x = y\) by the injectivity of \(\lambda\), making
      \(h\) injective.

      (2) Take an element \(n'\) in \(N'\) and denote \(m:= \pi'(n') m'\). Choose a
      basepoint \(n_0'\) in \(\pi'^{-1}(m')\). Then \(n'=x'n'_0\) for a uniquely
      defined \(x'\in \cL'(m')\). Since \(\varphi\) is surjective, we can write
      \(m'=\varphi(m)\) for some \(m\in M\). Choose a basepoint \(n_0\) in
      \(\pi^{-1}(m)\). By assumption \(h\) sends basepoints to basepoints. It follows
      that \(h(n_0)\) is also a basepoint in \(\pi'^{-1}(m')\).

      As such, \(h(n_0)=y'n_0'\) for an invertible element \(y'\in \cL'(m')\) and thus,
      \(n'=x'(y')^{-1}h(n_0)\). Since
      \[ \lambda(m):\cL(m)\to \cL'(\varphi(m)) \;=\; \cL'(m') \]
      is surjective, there exists an element \(x\in \cL(m)\) such that
      \(\lambda(x)=x'(y')^{-1}\). We obtain
      \[ h(xn_0) \;=\; \lambda(x)h(n_0) \;=\; x'(y')^{-1}y'n_0'=x'n_0'=n' \]
      and surjectivity of \(h\) follows.
    \end{proof}

    As a direct corollary of Lemma~\ref{lem:short5_precart}, we have the following
    corollary in the notation of Section~\ref{sec:pcoex}.

    \begin{Cor} \label{cor:Pcoex_groupoid}
      \(\Pcoex(M,\cL)\) is a groupoid.
    \end{Cor}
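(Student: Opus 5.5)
The plan is to apply the Short Five Lemma~\ref{lem:short5_precart} to each morphism of \(\Pcoex(M,\cL)\) and then check that the set-theoretic inverse is again a morphism of precartesian coextensions. A morphism there has the form \((\id_\cL, h, \id_M)\) with \(h\colon N\to N'\). Since \(\id_\cL\) and \(\id_M\) are isomorphisms, part (2) of Lemma~\ref{lem:short5_precart} gives that \(h\) is bijective. A bijective monoid homomorphism is a monoid isomorphism, so \(h^{-1}\colon N'\to N\) is a monoid homomorphism. It remains to verify that \((\id_\cL, h^{-1}, \id_M)\) satisfies the three conditions of Definition~\ref{def:morphism_precoext}.

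The first two conditions are formal. For commutation with the projections, \(\pi\circ h^{-1} = \pi'\circ h\circ h^{-1} = \pi'\), using \(\pi = \pi'\circ h\). For equivariance, take \(n'\in N'\) and \(x\in\cL(\pi'(n'))\) and set \(n=h^{-1}(n')\). Then \(h(x\pt n) = x\pt h(n) = x\pt n'\), and applying \(h^{-1}\) gives \(h^{-1}(x\pt n') = x\pt h^{-1}(n')\).

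The third condition, that \(h^{-1}\) sends basepoints to basepoints, is the only step needing an argument. Let \(n_0'\) be a basepoint of \(\pi'^{-1}(m)\) and choose a basepoint \(n_0\) of \(\pi^{-1}(m)\), which exists by Definition~\ref{def:pcoext}(i). Since \(h\) preserves basepoints, \(h(n_0)\) is a basepoint of \(\pi'^{-1}(m)\). By Lemma~\ref{le:basepoints_invertibly_related} there is an invertible \(u\in\cL(m)^\times\) with \(n_0' = u\pt h(n_0) = h(u\pt n_0)\). Hence \(h^{-1}(n_0') = u\pt n_0\). This is a basepoint because the basepoints of a fibre form an \(\cL(m)^\times\)-torsor (the last assertion of Lemma~\ref{le:basepoints_invertibly_related}, or equivalently Lemma~\ref{le:cartesian_criterion}).

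Therefore \((\id_\cL,h^{-1},\id_M)\) is a morphism of \(\Pcoex(M,\cL)\), and it is inverse to \((\id_\cL,h,\id_M)\) because composition is componentwise. So every morphism of \(\Pcoex(M,\cL)\) is an isomorphism, and \(\Pcoex(M,\cL)\) is a groupoid.
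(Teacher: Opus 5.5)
Your proof is correct and follows the paper's route: the paper records this statement as a direct corollary of Lemma~\ref{lem:short5_precart}(2), with no further argument. Since that lemma's proof only establishes bijectivity of \(h\), your explicit check that \(h^{-1}\) commutes with the projections, is equivariant, and sends basepoints to basepoints (via the \(\cL(m)^\times\)-torsor structure of Lemma~\ref{le:basepoints_invertibly_related}) is exactly the step the paper leaves implicit.
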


\section{Pushforwards, and Pullbacks}\label{sec:pushforward_pullback}

\subsection{Pushforward along a morphism of systems} \label{sec:pushforward}

    Let \(\alpha\colon \cL \to \cL'\) be a morphism in \(\cHCM(M)\) and let \(\cE = (0
    \to \cL \to N \xto{\pi} M \to 0) \in \Pcoex(M)\) be a precartesian coextension. We
    wish to construct a coextension \(\alpha_*\cE \in \Pcoex(M, \cL')\) fitting into a
    commutative diagram of morphisms of precartesian coextensions
    \[
      \xymatrix{
        0 \ar[r] & \cL \ar[r] \ar@{=>}[d]_{\alpha} & N \ar[r]^{\pi} \ar@{..>}[d]^{\rho}
                 & M \ar[r] \ar[d]^{\id}           & 0 \\
        0 \ar[r] & \cL' \ar@{..>}[r]               & K \ar@{..>}[r]_{\sigma}
                 & M \ar[r]                        & 0.
      }
    \]
    We fix a cleavage \(\kappa\colon M \to N\) throughout this section. That is to say,
    for each \(m \in M\), we have a basepoint \(\kappa(m) \in \pi^{-1}(m)\), which
    exists by the precartesian condition. We also write
    \[
      \xi\colon N \to \cL \,\qtext{with}\, \xi(n) \;\in\; \cL(\pi(n)),
    \]
    where \(\xi(n)\) is the unique element provided by the precartesian condition
    satisfying \(\xi(n) \pt \kappa(\pi(n)) = n\). Note that \(\kappa\) is not a
    splitting in general, as it need not be a homomorphism, and likewise, \(\xi\) is
    not generally a derivation.

    \begin{Const}[Pushforward] \label{const:pushforward}
      Consider the set of pairs
      \[
        \widetilde K \;:=\; \bigl\{(y,n) \;\bigm|\; n \in N,\; y \in
        \cL'(\pi(n))\bigr\}
      \]
      and define the relation \(\sim\) on \(\widetilde K\) under which \((y,n) \sim
      (y',n')\) if and only if
      \begin{enumerate}
        \item[(i)] \(\pi(n) = \pi(n')\), and
        \item[(ii)] \(y \cdot \alpha_{\pi(n)}(\xi(n)) \;=\; y' \cdot
              \alpha_{\pi(n)}(\xi(n'))\) in \(\cL'(\pi(n))\).
      \end{enumerate}
      Write \([y,n]\) for the \(\sim\)-class of \((y,n)\), define the operation
      \[
        [y,n] \cdot [z,n'] \;:=\; \bigl[\pi(n')_*(y) \cdot \pi(n)_*(z),\; nn'\bigr]
      \]
      on \(K := \widetilde K / {\sim}\), and define \(\sigma\colon K \to M\) by
      \(\sigma[y,n] = \pi(n)\). For \(u \in \cL'(m)\), define the action on
      \(\sigma^{-1}(m)\) through \(u \pt [y,n] = [u \cdot y,\, n]\).
    \end{Const}

    \begin{Le} \label{le:pushforward_welldef}
      The relation \(\sim\) is an equivalence relation and the operation on \(K\) is
      well-defined. The triple \(\alpha_*\cE := (0 \to \cL' \to K \xto{\sigma} M \to
      0)\) is a precartesian coextension of \(M\) by \(\cL'\), with basepoint
      \(\kappa'(m) := [e_m, \kappa(m)]\) of \(\sigma^{-1}(m)\), where \(e_m \in
      \cL'(m)\) denotes the identity element.
    \end{Le}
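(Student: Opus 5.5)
The whole argument rests on one observation. The relation \(\sim\) is the kernel of an explicit map, and that map converts the multiplication of \(\widetilde K\) into a formula involving only data in \(\cL'\). I would first record the \emph{cocycle of the cleavage}. Since \(\kappa(m)\kappa(m')\in\pi^{-1}(mm')\), there is a unique \(f(m,m')\in\cL(mm')\) with \(\kappa(m)\kappa(m') = f(m,m')\pt\kappa(mm')\). Write \(n=\xi(n)\pt\kappa(m)\) and \(n'=\xi(n')\pt\kappa(m')\). The compatibility condition of Definition~\ref{def:pcoext}, together with uniqueness at the basepoint \(\kappa(mm')\), gives
\[
  \xi(nn') \;=\; m'_*(\xi(n))\cdot m_*(\xi(n'))\cdot f(m,m').
\]
Also \(\xi(\kappa(m))=e_m\) by uniqueness. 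By Convention~\ref{conv:canonical_basepoint} and \(\kappa(1)=1_N\), we get \(\xi(1_N)=e_1\).

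Next, define \(\Phi\colon\widetilde K\to\coprod_{m}\cL'(m)\) by \(\Phi(y,n):=\bigl(\pi(n),\,y\cdot\alpha_{\pi(n)}(\xi(n))\bigr)\). By Construction~\ref{const:pushforward}, \((y,n)\sim(y',n')\) if and only if \(\Phi(y,n)=\Phi(y',n')\). Hence \(\sim\) is an equivalence relation with no computation needed. For the product, I would use the displayed formula, the naturality of \(\alpha\) (so \(\alpha_{mm'}\circ m'_* = m'_*\circ\alpha_m\)), and the fact that the \(r_*\) are monoid homomorphisms. This should give
\[
  \Phi\bigl(\pi(n')_*(y)\cdot\pi(n)_*(z),\,nn'\bigr) \;=\; m'_*\bigl(y\cdot\alpha(\xi(n))\bigr)\cdot m_*\bigl(z\cdot\alpha(\xi(n'))\bigr)\cdot\alpha_{mm'}(f(m,m')).
\]
The right-hand side depends only on \(\Phi(y,n)\) and \(\Phi(z,n')\), so the operation on \(K\) is well defined. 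I expect this to be the main obstacle. The key point is to route everything through \(\Phi\) and the cocycle \(f\), rather than trying to cancel elements in \(\cL'(m)\); cancellation is unavailable because these are only monoids.

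Once well-definedness is settled, the monoid axioms are checked on representatives. Associativity follows from associativity of \(N\) and functoriality \((rs)_*=r_*s_*\). Commutativity follows from commutativity of \(N\) and of each \(\cL'(m)\). The unit is \([e_1,1_N]\), since \(1_*=\id\). The map \(\sigma\) is visibly a surjective homomorphism, since \(\pi\) is. The action \(u\pt[y,n]=[u\cdot y,n]\) is well defined because \(\Phi(u\cdot y,n)=u\cdot\Phi(y,n)\) in the second coordinate. It is a monoid action because \(\cL'(m)\) is a monoid.

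Compatibility is immediate from the definition of the product: \([x\cdot y,n][x'\cdot z,n']=[m'_*(x)\cdot m_*(x')\cdot m'_*(y)\cdot m_*(z),\,nn']\). For the precartesian condition with basepoint \(\kappa'(m)=[e_m,\kappa(m)]\), I use \(\xi(\kappa(m))=e_m\). This gives \([y,n]=[y\cdot\alpha(\xi(n)),\kappa(m)]=\bigl(y\cdot\alpha(\xi(n))\bigr)\pt\kappa'(m)\), which proves existence. For uniqueness, \(w\pt\kappa'(m)=w'\pt\kappa'(m)\) means \(\Phi(w,\kappa(m))=\Phi(w',\kappa(m))\), that is, \(w=w'\). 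This also shows that \(\sigma^{-1}(m)\to\cL'(m)\), \([y,n]\mapsto y\cdot\alpha(\xi(n))\), is a bijection. Finally, \(\kappa'(1)=[e_1,1_N]=1_K\), which is consistent with Convention~\ref{conv:canonical_basepoint}.
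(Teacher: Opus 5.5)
Your proof is correct and follows essentially the same route as the paper. The paper likewise derives \(\xi(nn') = m'_*(\xi(n))\cdot m_*(\xi(n'))\cdot c\) with \(c=\xi(\kappa(m)\kappa(m'))\), your \(f(m,m')\). It then uses naturality of \(\alpha\) to reduce well-definedness to the invariants \(y\cdot\alpha_m(\xi(n))\), and checks the basepoint via \(\xi(\kappa(m))=e_m\); routing everything through \(\Phi\) is only a repackaging that makes the equivalence-relation and well-definedness steps a little more transparent.
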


    \begin{proof}
      That \(\sim\) is an equivalence relation is immediate. To see that our operation
      is well-defined, let \((y_1, n_1) \sim (y_2, n_2)\) with \(m = \pi(n_1) =
      \pi(n_2)\) and \((y'_1, n'_1) \sim (y'_2, n'_2)\) with \(m' = \pi(n'_1) =
      \pi(n_2')\). We must show that
      \[
        \bigl(m'_*(y_1) \cdot m_*(y'_1),\; n_1 n_1'\bigr)
        \;\sim\;
        \bigl(m'_*(y_2) \cdot m_*(y'_2),\; n_2 n_2'\bigr).
      \]
      For this, we have to show two equalities:
      \begin{itemize}
        \item[(i)] \(\pi(n_1n_1') = \pi(n_2n_2')\) and
        \item[(ii)] \(m'_*(y_1)\cdot m_*(y'_1)\cdot \alpha_{\pi(n_1n'_1)}(\xi(n_1n'_1))
              = m'_*(y_2)\cdot m_*(y'_2)\cdot \alpha_{\pi(n_2n'_2)}(\xi(n_2n'_2))\)
      \end{itemize}
      The first one is trivial, as \(\pi\) is a homomorphism. For the second one, we
      need a bit more arguing. Let \(i\in \{1, 2\}\). We start by understanding
      \(\xi(n_in'_i)\). Using the compatibility condition for \(n_i =
      \xi(n_i)\pt\kappa(m)\) and \(n_i' = \xi(n_i')\pt\kappa(m')\) gives us
      \begin{eqnarray*}
        n_i n'_i & = & \left(\xi(n_i)\pt\kappa(m)\right)\left(\xi(n'_i)\pt\kappa(m')\right) \\
                 & = & \left(m'_*(\xi(n_i))\cdot m_*(\xi(n'_i))\right)\pt\left(\kappa(m) \cdot \kappa(m')\right).
      \end{eqnarray*}
      Using this, we get
      \begin{eqnarray*}
        \xi(n_i n_i') \pt \kappa(mm') & = & n_1n'_i                                                                                   \\
                                      & = & \left(m'_*(\xi(n_i))\cdot m_*(\xi(n_i'))\right)\pt\left(\kappa(m) \cdot \kappa(m')\right) \\
                                      & = & \left(m'_*(\xi(n_i))\cdot m_*(\xi(n_i'))\cdot c\right)\pt\kappa(mm'),
      \end{eqnarray*}
      where the last equality of the associativity condition and the action property,
      with
      \[ c \;:=\; \xi(\kappa(m)\kappa(m')) \in \cL(mm') \]
      being shorthand for the unique element satisfying \(c \pt \kappa(mm') \;=\;
      \kappa(m) \cdot \kappa(m')\). The uniqueness of \(\xi(n_i n_i')\) now implies
      \begin{equation} \label{eq:xi_product}
        \xi(n_i n_i') \;=\; m'_*(\xi(n_i))\cdot m_*(\xi(n_i'))\cdot c.
      \end{equation}
      Using the fact that \(\alpha\) is a natural isomorphism (and thus commutes with
      \(m_*, m'_*\)), we can do the following simplification of Equation~(ii) (of the
      well-definedness) using Equation~\eqref{eq:xi_product}:
      \begin{eqnarray*}
        m'_*(y_i)\cdot m_*(y'_i)\cdot \alpha_{\pi(n_in'_i)}(\xi(n_in'_i)) & = & m'_*(y_i)\cdot m_*(y'_i)\cdot \alpha_{mm'}(m'_*(\xi(n_i))\cdot m_*(\xi(n_i'))\cdot c)                        \\
                                                                          & = & m'_*(y_i)\cdot m_*(y'_i)\cdot m'_*(\alpha_m(\xi(n_i)))\cdot m_*(\alpha_{m'}(\xi(n_i')))\cdot \alpha_{mm'}(c) \\
                                                                          & = & m'_*(y_i\cdot \alpha_m(\xi(n_i)))\cdot m_*(y'_i \cdot \alpha_{m'}(\xi(n_i')))\cdot \alpha_{mm'}(c)
      \end{eqnarray*}
      But as \(y_1\cdot \alpha_m(\xi(n_1)) = y_2\cdot \alpha_m(\xi(n_2))\) (since
      \((y_1, n_1) \sim (y_2, n_2)\)), we are done.

      It is clear that the structure is commutative, as it is defined symmetrically.

      To see associativity, let \(m_1 = \pi(n_1)\), \(m_2 = \pi(n_2)\) and \(m_3 =
      \pi(n_3)\). Using functoriality \((m_1m_2)_* = m_{1*}\circ m_{2*}\), we have
      \begin{eqnarray*}
        \bigl[[y_1, n_1] \cdot [y_2, n_2]\bigr] \cdot [y_3, n_3] & = & [m_{2*}(y_1) \cdot m_{1*}(y_2), n_1 n_2] \cdot [y_3, n_3] \\
                                                                 & = & [(m_2m_3)_*(y_1) \cdot (m_1m_3)_*(y_2) \cdot (m_1m_2)_*(y_3), n_1 n_2 n_3]
      \end{eqnarray*}
      and on the one hand
      \begin{eqnarray*}
        [y_1, n_1] \cdot \bigl[[y_2, n_2]\cdot [y_3, n_3]\bigr] & = & [y_1, n_1] \cdot [m_{3*}(y_2) \cdot m_{2*}(y_3), n_2 n_3] \\
                                                                & = & [(m_2m_3)_*(y_1) \cdot (m_1m_3)_*(y_2) \cdot (m_1m_2)_*(y_3), n_1 n_2 n_3],
      \end{eqnarray*}
      giving us associativity.

      The identity is played by \([e_{1_M}, 1_N]\).

      \medskip It is obvious that \(\sigma\) is a well-defined surjective monoid
      homomorphism, and likewise, that the action is well-defined.

      We verify that the compatibility condition holds:

      Let \(u \in \cL'(m)\), \(u' \in \cL'(m')\), \([y,n]\in\sigma^{-1}(m)\),
      \([y',n']\in\sigma^{-1}(m')\). We have
      \begin{eqnarray*}
        (u\pt[y,n])\cdot(u'\pt[y',n']) & = & [u\cdot y,\,n]\cdot[u'\cdot y',\,n']                    \\
                                       & = & [m'_*(u\cdot y)\cdot m_*(u'\cdot y'),\; nn']            \\
                                       & = & [m'_*(u)\cdot m'_*(y)\cdot m_*(u')\cdot m_*(y'),\; nn'] \\
                                       & = & (m'_*(u)\cdot m_*(u'))\pt[m'_*(y)\cdot m_*(y'),\; nn']  \\
                                       & = & (m'_*(u)\cdot m_*(u'))\pt\bigl([y,n]\cdot[y',n']\bigr),
      \end{eqnarray*}
      which is Definition~\ref{def:pcoext}(ii) for \(\cL'\).

      \medskip

      Lastly, we show that \(\kappa'(m) = [e_m, \kappa(m)]\) serves as a basepoint of
      \(\sigma^{-1}(m)\). For this, we have to show that for every \([y,n]\in
      \sigma^{-1}(m)\), there exists a unique \(u\in\cL'(m)\) with \(u\pt\kappa'(m) =
      [y,n]\).

      We have
      \[
        u\pt[e_m,\kappa(m)] \;=\; [u\cdot e_m,\,\kappa(m)] \;=\; [u,\,\kappa(m)] \] and
        thus, for \([u,\kappa(m)] = [y,n]\) to hold, we must have \[ u\cdot \alpha_m
        (\xi(\kappa(m))) \;=\; y\cdot \alpha_m(\xi(n)) \] as \(\pi(\kappa(m)) =
        \pi(n)\) always holds. As \(\xi(\kappa(m)) = e_m\), the left hand side
        simplifies to \(u \cdot \alpha_m(e_m) = u\) and thus, \[ u \;=\;
        y\cdot\alpha_m(\xi(n))
      \]
      is the unique solution.
    \end{proof}

    This proves that \(0 \to \cL' \to K \xto{\sigma} M \to 0\) is a coextension. Next,
    we ave to show that there exists a morphism between this and the original
    coextension.

    \begin{Pro} \label{pro:pushforward_morphism}
      The tripple \((\alpha, \rho, \id_M)\colon \cE \to \alpha_*\cE\) is a precartesian
      homomorphism, where \(\rho\colon N \to K\) is given by \(n \mapsto [e_{\pi(n)},\,
      n]\).
    \end{Pro}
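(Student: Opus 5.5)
We verify, in order, the requirements of Definition~\ref{def:morphism_precoext} for the triple \((\alpha,\rho,\id_M)\): that \(\rho\) is a monoid homomorphism, that \(\sigma\circ\rho=\pi\), that \(\rho\) intertwines the actions via \(\alpha\), and that \(\rho\) sends basepoints to basepoints. Since \(\varphi=\id_M\), we have \(\varphi^*\cL'=\cL'\), so \(\alpha\) is already a morphism in \(\cHCM(M)\) of the required type. The equality \(\sigma\rho(n)=\sigma[e_{\pi(n)},n]=\pi(n)\) holds by definition.

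\emph{Homomorphism.} By the product on \(K\),
\[
\rho(n)\rho(n') \;=\; \bigl[\pi(n')_*(e_{\pi(n)})\cdot\pi(n)_*(e_{\pi(n')}),\, nn'\bigr] \;=\; [e_{\pi(nn')},\, nn'] \;=\; \rho(nn'),
\]
because the structure maps \(r_*\) of \(\cL'\) are monoid homomorphisms and so preserve units. The unit of \(K\) is \([e_{1_M},1_N]=\rho(1_N)\), as observed in the proof of Lemma~\ref{le:pushforward_welldef}.

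\emph{Equivariance.} For \(x\in\cL(m)\) and \(n\in\pi^{-1}(m)\), we must show \([e_m,x\pt n]=[\alpha_m(x),n]\), since the right-hand side equals \(\alpha(x)\pt\rho(n)\). Both representatives lie over \(m\), so by Construction~\ref{const:pushforward} it suffices to show
\[
\alpha_m(\xi(x\pt n)) \;=\; \alpha_m(x)\cdot\alpha_m(\xi(n)).
\]
This follows from \(\xi(x\pt n)=x\cdot\xi(n)\). To see that identity, note that \((x\cdot\xi(n))\pt\kappa(m)=x\pt(\xi(n)\pt\kappa(m))=x\pt n\), and apply the uniqueness in the precartesian condition at the basepoint \(\kappa(m)\). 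Then use that \(\alpha_m\) is a monoid homomorphism. This is the step where \(\xi\) does real work, but it is short.

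\emph{Basepoints.} This is the only step requiring some care. Lemma~\ref{le:pushforward_welldef} only identifies \(\kappa'(m)=[e_m,\kappa(m)]=\rho(\kappa(m))\) as a basepoint, so \(\rho\) maps the chosen basepoints to basepoints. For an arbitrary basepoint \(n_0\in\pi^{-1}(m)\), Lemma~\ref{le:basepoints_invertibly_related} gives \(n_0=u\pt\kappa(m)\) with \(u\in\cL(m)^\times\). Equivariance then gives \(\rho(n_0)=\alpha_m(u)\pt\kappa'(m)\). Since \(\alpha_m\) is a monoid homomorphism, \(\alpha_m(u)\in\cL'(m)^\times\). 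By the last remark in the proof of Lemma~\ref{le:basepoints_invertibly_related} (equivalently Lemma~\ref{le:cartesian_criterion}), acting on a basepoint by a unit yields a basepoint, so \(\rho(n_0)\) is a basepoint of \(\sigma^{-1}(m)\). This completes the verification.
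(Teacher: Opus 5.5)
Your proof is correct and follows the paper's argument step by step: the same computation shows \(\rho\) is multiplicative, the identity \(\xi(x\pt n)=x\cdot\xi(n)\) gives \([e_m,x\pt n]=[\alpha_m(x),n]\), and \(\rho(\kappa(m))=\kappa'(m)\) handles basepoints. Your extra checks fill in what the paper leaves implicit: that \(\rho(1_N)\) is the unit of \(K\), and that an arbitrary basepoint \(u\pt\kappa(m)\) with \(u\in\cL(m)^\times\) is sent to the basepoint \(\alpha_m(u)\pt\kappa'(m)\). The latter matters because Definition~\ref{def:morphism_precoext} requires every basepoint, not only the chosen ones, to go to a basepoint.
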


    \begin{proof}
      We start by showing that \(\rho\) is a monoid homomorphism. Indeed, for \(n,
      n'\in N\) with \(\pi(n)=m\), \(\pi(n')=m'\) we have
      \[
        \rho(n)\cdot\rho(n') \;=\; [e_m,n]\cdot[e_{m'},n'] \;=\; [m'_*(e_m)\cdot
        m_*(e_{m'}),\,nn'] \;=\; [e_{mm'},\,nn'] \;=\; \rho(nn').
      \]
      We also have \(\sigma\circ\rho=\pi\) as \(\sigma[e_m,n]=\pi(n)\). For the
      equivariance with the action, let \(x\in\cL(m)\) and \(n\in\pi^{-1}(m)\). We have
      \((x\cdot\xi(n))\pt\kappa(m) = x\pt(\xi(n)\pt\kappa(m)) = x\pt n\). Uniqueness of
      the values associated by \(\xi\) now gives us \(\xi(x\pt n) = x\cdot\xi(n)\). We
      can follow
      \[
        e_m\cdot\alpha_m(\xi(x\pt n)) \;=\; e_m\cdot\alpha_m(x\cdot\xi(n)) \;=\;
        \alpha_m(x)\cdot\alpha_m(\xi(n)) \;=\; \alpha_m(x)\cdot\alpha_m(\xi(n))
      \]
      and thus, \([e_m, x\pt n] = [\alpha_m(x), n]\). But this means \(\rho(x\pt n) =
      \alpha_m(x)\pt\rho(n)\) as required.

      Lastly, to see that basepoints are respected, observe that \(\rho(\kappa(m)) =
      [e_m, \kappa(m)] = \kappa'(m)\).
    \end{proof}

\subsection{Pullback along a monoid homomorphism} \label{sec:pullback}

    Let \(\varphi\colon M' \to M\) be a monoid homomorphism and \(\cE = (0 \to \cL \to
    N \xto{\pi} M \to 0) \in \Pcoex(M, \cL)\) a precartesian coextension with cleavage
    \(\kappa\). Our aim is to construct the pullback \(\varphi^*\cE \in \Pcoex(M',
    \varphi^*\cL)\) fitting into a commutative diagram
    \begin{equation} \label{eq:pullback_diagramme}
      \xymatrix{
      0 \ar[r] & \varphi^*\cL \ar[r] \ar@{=>}[d] & \varphi^*N \ar[r]^{\sigma} \ar@{..>}[d]^{\lambda} & M' \ar[r] \ar[d]^{\varphi} & 0 \\
      0 \ar[r] & \cL \ar[r]                      & N \ar[r]_{\pi}                                    & M \ar[r]                   & 0,
      }
    \end{equation}
    where the left vertical arrow is the canonical morphism \(\varphi^*\cL \to \cL\) of
    systems over \(\varphi\).

    \begin{Const}[Pullback]
      The pullback is simply played by the fibre product, meaning
      \[
        \varphi^*N \;:=\; N \times_M M' \;=\; \bigl\{( n,m') \;\bigm|\; m' \in M',\; n
        \in N,\; \varphi(m') \;=\; \pi(n)\bigr\},
      \]
      with multiplication \((n_1, m'_1) (n_2,m'_2) := ( n_1 n_2.\, m_1'm_2')\). We have
      the projection \(\sigma\colon \varphi^*N \to M'\), given by \(\sigma(n, m') :=
      m'\) and the actions
      \[ x \pt (n, m') \;:=\; (x \pt n,\, m') \]
      for each fibre \((n, m') \in \sigma^{-1}(m')\). Here \(x \in (\varphi^*\cL)(m') =
      \cL(\varphi(m'))\). Given a cleavage \(\kappa\) of \(\cE\), we have the cleavage
      \(\kappa'(m') := (\kappa(\varphi(m')), \, m')\) of the pullback.
    \end{Const}

    \begin{Pro} \label{pro:pullback_welldef}
      In the above construction, \(\varphi^*N\) is a monoid,
      \[
        \varphi^*\cE \;:=\; \bigl(0 \to \varphi^*\cL \to \varphi^*N
        \xto{\sigma} M' \to 0\bigr)
      \]
      is a precartesian coextension of \(M'\) by \(\varphi^*\cL\), and \(\kappa'\) is a
      cleavage of \(\varphi^*\cE\).
    \end{Pro}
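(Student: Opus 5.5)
The argument is a sequence of direct verifications, carried out in the order of the statement.

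\emph{Monoid structure.} \(\varphi^*N = N\times_M M'\) is a submonoid of the product monoid \(N\times M'\). If \(\pi(n_i)=\varphi(m_i')\) for \(i=1,2\), then \(\pi(n_1n_2)=\varphi(m_1'm_2')\) because \(\pi\) and \(\varphi\) are homomorphisms, and \((1_N,1_{M'})\) lies in it. Commutativity and associativity are inherited from \(N\times M'\), and \(\sigma\) is a homomorphism since it is the restriction of a projection.

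\emph{Surjectivity and the action.} \(\sigma\) is surjective because \(\pi\) is: for \(m'\in M'\), any \(n\in\pi^{-1}(\varphi(m'))\) gives \((n,m')\in\sigma^{-1}(m')\). The action of \((\varphi^*\cL)(m')=\cL(\varphi(m'))\) on \(\sigma^{-1}(m')\) is well defined by Lemma~\ref{le:pi_respects_action}, since \(\pi(x\pt n)=\pi(n)=\varphi(m')\). It is an action because the action of \(\cL(\varphi(m'))\) on \(\pi^{-1}(\varphi(m'))\) is one.

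\emph{Compatibility.} The one point to check is that the structure maps of \(\varphi^*\cL\) are \(m'_*:=\varphi(m')_*\). Then for \(x\in\cL(\varphi(m_1'))\) and \(x'\in\cL(\varphi(m_2'))\),
\[
  (x\pt(n_1,m_1'))(x'\pt(n_2,m_2')) \;=\; \bigl((x\pt n_1)(x'\pt n_2),\, m_1'm_2'\bigr),
\]
and applying Definition~\ref{def:pcoext}(ii) in \(N\) with \(\pi(n_i)=\varphi(m_i')\) gives exactly \(\bigl(\varphi(m_2')_*(x)\cdot\varphi(m_1')_*(x')\bigr)\pt\bigl((n_1,m_1')(n_2,m_2')\bigr)\).

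\emph{Precartesian condition and cleavage.} Take \(\kappa'(m')=(\kappa(\varphi(m')),m')\) as the basepoint over \(m'\). The map \(\sigma^{-1}(m')\to\pi^{-1}(\varphi(m'))\), \((n,m')\mapsto n\), is a bijection that commutes with the actions. Existence and uniqueness of \(x\) with \(x\pt\kappa'(m')=(n,m')\) therefore reduce to the precartesian property of \(\kappa(\varphi(m'))\); equivalently, the map \(\eta\) of the lemma characterising basepoints is bijective. Finally, \(\kappa'(1)=(\kappa(1),1)=(1_N,1_{M'})\), so \(\kappa'\) is a cleavage.

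The only step needing care is this fibrewise identification \(\sigma^{-1}(m')\cong\pi^{-1}(\varphi(m'))\) and the bookkeeping of the transported structure maps \(\varphi(m')_*\). Everything else is routine.
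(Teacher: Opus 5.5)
Your proposal is correct and follows essentially the same route as the paper: direct verification of the monoid structure and surjectivity of \(\sigma\), well-definedness of the action via \(\pi(x\pt n)=\pi(n)\), compatibility by applying Definition~\ref{def:pcoext}(ii) in \(N\) coordinatewise, and reduction of the basepoint property of \(\kappa'(m')\) to that of \(\kappa(\varphi(m'))\). You are slightly more careful than the paper in two places: you make the transported structure maps \(\varphi(m')_*\) explicit, and you check the normalisation \(\kappa'(1_{M'})=(1_N,1_{M'})\).
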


    \begin{proof}
      It is clear that \(\varphi^*N\) is a commutative monoid and that \(\sigma\) is a
      surjective morphism. To see that the action is well-defined, let \(x \in
      \cL(\varphi(m'))\) and \((n, m') \in \sigma^{-1}(m')\). We have \(x \pt n \in
      \pi^{-1}(\varphi(m'))\) by assumption and subsequently, \((x\pt n,\, m') \in
      \varphi^*N\). Of course, \(\sigma(x\pt n,m') = m'\).

      It also satisfies the compatibility condition, as for \(x \in
      \cL(\varphi(m'_1))\), \(y \in \cL(\varphi(m'_2))\), \((n_1, m'_1) \in
      \sigma^{-1}(m'_1)\), \((n_2, m'_2) \in \sigma^{-1}(m'_2)\), we have
      \begin{align*}
        (x \pt (n_1, m'_1))\cdot(y \pt (n_2, m'_2)) & = ( x\pt n_1,m'_1)\cdot(y\pt n_2,m'_2)                     \\
                                                    & = ( (x\pt n_1)(y\pt n_2),\, m'_1m'_2)                      \\
                                                    & = ( (m'_{2*}(x)\cdot m_{1_*}'(y))\pt (n_1n_2),\, m'_1m'_2) \\
                                                    & = (m'_{2*}(x)\cdot m_{1*}'(y))\pt ( n_1n_2,\,m'_1m'_2 )    \\
                                                    & = (m'_{2*}(x)\cdot m_{1*}'(y))\pt \bigl((n_1, m'_1)\cdot(n_2, m'_2)\bigr).
      \end{align*}

      Lastly, it remains to verify the precartesianness of our pullback construction.
      For this, we aim to show that \(\kappa'(m') = ( \kappa(\varphi(m')), m')\) is a
      basepoint of \(\sigma^{-1}(m')\), meaning, for any given \((n,m') \in
      \sigma^{-1}(m')\), we need to find a unique \(x\in \cL(\varphi(m'))\), such that
      \(x\pt \kappa'(m') = (n,m')\).

      As \(\pi(n) = \varphi(m')\) and \(\cE\) is precartesian, there exists a unique
      \(x \in \cL(\varphi(m'))\) with \(x \pt \kappa(\varphi(m')) = n\). But then \(x
      \pt \kappa'(m') = ( x\pt\kappa(\varphi(m'), m')) = (n, m')\), as desired.
      Uniqueness of \(x\) in the pullback follows form the uniqueness of \(x\) in
      \(\cE\).
    \end{proof}

    It is clear that \(\lambda\) is a monoid homomorphism and that \(\pi\circ\lambda =
    \varphi\circ\sigma\). The action is likewise respected, that is to say, we have
    \(\lambda(x\pt(n, m')) = x\pt\lambda(n,m')\), since both sides equal \(x\pt n\).
    Hence, Diagram~\eqref{eq:pullback_diagramme} is a commutative diagram of
    coextensions. Moreover, by construction \(\lambda(\kappa'(m')) = \lambda(
    \kappa(\varphi(m'),m')) = \kappa(\varphi(m'))\) holds. This is exactly the chosen
    basepoint of \(\cE\) over \(\varphi(m')\) and thus, \(\lambda\) sends basepoints to
    basepoints, so we have indeed constructed a precartesian morphism.

    Indeed, the pullback construction \(\cE \mapsto \varphi^*\cE\) is functorial for
    both, varying \(\cE\) and \(\varphi\):

    \begin{Pro} \label{prop:pullback_functor}
      \phantom{.}
      \begin{enumerate}
        \item Let \((\id_\cL, h, \id_M)\colon \cE \to \cE'\) be a precartesian
              morphism. The map
              \[
                \varphi^*h\colon \varphi^*N \to \varphi^*N', \qquad
                (n,m') \;\mapsto\; (h(n),\,m'),
              \]
              is a precartesian morphism \((\id_{\varphi^*\cL}, \varphi^*h,
              \id_{M'})\colon \varphi^*\cE \to \varphi^*\cE'\).
        \item Let \(\psi\colon M'' \to M'\) be a further homomorphism. There is a
              natural isomorphism
              \[
                (\varphi\circ\psi)^*\cE \cong \psi^*(\varphi^*\cE),
                \qquad (n,m'') \;\mapsto\; ((n,\, \psi(m'')),\,m''))
              \]
              of precartesian morphism, which is itself precartesian.
      \end{enumerate}
    \end{Pro}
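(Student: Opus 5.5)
Both parts are direct verifications against Definition~\ref{def:morphism_precoext}. The plan is to check in turn well-definedness, the homomorphism property, compatibility with \(\sigma\), equivariance of the action, and preservation of basepoints. Throughout, the pullback structure of Proposition~\ref{pro:pullback_welldef} is used: multiplication is componentwise, \(x \pt (n,m') = (x\pt n, m')\), and the basepoints of \(\sigma^{-1}(m')\) are exactly the pairs \((n_0,m')\) with \(n_0\) a basepoint of \(\pi^{-1}(\varphi(m'))\).

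\emph{Part (1).} Start from \((n,m')\in\varphi^*N\). Then \(\pi'(h(n)) = \pi(n) = \varphi(m')\), since \(\pi'\circ h = \pi\), so \((h(n),m')\in\varphi^*N'\) and \(\varphi^*h\) is well defined.
\begin{itemize}
  \item It is a monoid homomorphism because \(h\) is and the second component is the identity.
  \item It commutes with the projections to \(M'\) trivially.
  \item Equivariance follows from the equivariance of \(h\) with \(\lambda=\id\):
  \[
    \varphi^*h(x\pt(n,m')) = (h(x\pt n),m') = (x\pt h(n),m') = x\pt \varphi^*h(n,m').
  \]
\end{itemize}
For basepoints, first show that every basepoint of \(\sigma^{-1}(m')\) has the form \((n_0,m')\) with \(n_0\) a basepoint of \(\pi^{-1}(\varphi(m'))\). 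By Lemma~\ref{le:cartesian_criterion} applied to the cleavage \(\kappa'\), the basepoints are the elements \(u\pt\kappa'(m') = (u\pt\kappa(\varphi(m')),m')\) with \(u\) invertible. By Lemma~\ref{le:basepoints_invertibly_related}, \(u\pt\kappa(\varphi(m'))\) is then a basepoint of \(\pi^{-1}(\varphi(m'))\). Now \(h\) sends \(n_0\) to a basepoint, and the same description applied to \(\varphi^*\cE'\) shows that \((h(n_0),m')\) is a basepoint.

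\emph{Part (2).} The object \((\varphi\psi)^*N\) consists of pairs \((n,m'')\) with \(\pi(n)=\varphi\psi(m'')\). The object \(\psi^*\varphi^*N\) consists of pairs \(((n,m'),m'')\) with \(\pi(n)=\varphi(m')\) and \(m'=\psi(m'')\). Under the given map, \(m'\) is forced to equal \(\psi(m'')\), so the map lands in \(\psi^*\varphi^*N\). Its inverse is \(((n,m'),m'')\mapsto(n,m'')\), so it is a bijection. It is a homomorphism because all operations are componentwise.

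The systems agree on the nose, since \((\psi^*\varphi^*\cL)(m'') = \cL(\varphi\psi(m''))\), so \(\lambda\) is the identity. With this, equivariance is immediate:
\[
  x\pt(n,m'') = (x\pt n, m'') \mapsto ((x\pt n,\psi(m'')),m'') = x\pt((n,\psi(m'')),m'').
\]
For basepoints, the description above, used twice, shows that the basepoints on either side are exactly those whose \(N\)-component is a basepoint of \(\pi^{-1}(\varphi\psi(m''))\), so they correspond. Alternatively, it is enough to check that the map sends basepoints to basepoints and then invoke Corollary~\ref{cor:Pcoex_groupoid} or Lemma~\ref{lem:short5_precart}(2) to conclude that it is an isomorphism. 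Naturality in \(\cE\) means compatibility with the maps \(\varphi^*h\) from part (1), and this holds because both composites send \((n,m'')\) to \(((h(n),\psi(m'')),m'')\).

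The main obstacle is the basepoint bookkeeping. One needs the characterisation that basepoints of a pullback fibre are precisely the pairs whose \(N\)-component is a basepoint, not merely that \(\kappa'(m')\) is one. This is where Lemma~\ref{le:cartesian_criterion} and Lemma~\ref{le:basepoints_invertibly_related} come in; everything else is componentwise and routine.
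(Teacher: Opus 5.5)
Your proposal is correct and follows the paper's proof: a direct componentwise check of well-definedness, multiplicativity, compatibility with the projections, equivariance and basepoints. The only difference is the basepoint step. The paper checks only the induced cleavage \(\kappa'(m')=(\kappa(\varphi(m')),m')\), after choosing cleavages with \(h\circ\kappa=\kappa'\). You instead characterise all basepoints of a pullback fibre, which matches the ``every basepoint'' clause of Definition~\ref{def:morphism_precoext} more literally. You use both directions of that characterisation but derive only one. The other is immediate, for instance by rerunning the proof of Proposition~\ref{pro:pullback_welldef} with an arbitrary basepoint in place of \(\kappa(\varphi(m'))\).
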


    \begin{proof}
      (1) Since the map between the monoids is the identity, meaning \(\id_M\), we have
      \(\pi'(h(n)) = \pi(n) = \varphi(m')\), so \(( h(n),m') \in \varphi^*N'\). It
      respects multiplication since
      \[
        \varphi^*h((n_1,m'_1)(n_2,m'_2))
        \;=\; ( h(n_1 n_2),m'_1 m'_2)
        \;=\; (
        h(n_1)h(n_2), m'_1 m'_2).
      \]
      That \(\sigma'\circ\varphi^*h = \sigma\) holds, and the action condition
      \[
        \varphi^*h(x\pt(n,m')) \;=\; (h(x\pt n),m') \;=\; (x\pt h(n),m') \;=\;
        x\pt\varphi^*h(n,m')
      \]
      are immediate. Basepoints are sent to basepoints as
      \[
        \varphi^*h(\kappa_1'(m')) \;=\; (h(\kappa(\varphi(m')),m') \;=\;
        (\kappa'(\varphi(m')),m') \;=\; \kappa_2'(m'),
      \]
      where \(\kappa_1', \kappa_2'\) are the induced cleavages of \(\varphi^*\cE\),
      \(\varphi^*\cE'\) respectively. Note that \(\kappa, \kappa'\) can always be
      chosen so that \(h(\kappa(m)) = \kappa'(m)\) for all \(m \in M\).

      (2) Both sides have the same underlying set, being \(\{(n,m'') \mid
      \varphi\psi(m'') = \pi(n)\}\). The given map is a bijection with inverse \((n,
      \psi(m''),m'') \mapsto(n,m'')\). It is easily verified that it is a monoid
      homomorphism, respects \(\sigma\) and the action. It also sends the basepoint \((
      \kappa(\varphi\psi(m'')), m'')\) of \((\varphi\psi)^*\cE\) to the basepoint \(((
      \kappa(\varphi(\psi(m''))), \psi(m'')), m'' ))\) of \(\psi^*(\varphi^*\cE)\).
    \end{proof}

\section{The category \(\Pcoex(M, \cL)\) and the Baer sum}\label{sec:pcoex}

  We now proceed to study the category \(\Pcoex(M, \cL)\), for a fixed monoid \(M\) and
  a fixed system of monoids \(\cL\). We have shown in \cite{p15} that when \(\cL\) is a
  system of abelian groups, \(\Pcoex(M, \cL)\) is a symmetric categorical group. We
  will show in Theorem~\ref{thm:monoidal_Pcoex_affine} that this remains mostly true
  for precartesian coextensions. Specifically, we show that \(\Pcoex(M, \cL)\) is a
  symmetric monoidal groupoid.

\subsection{The Baer sum} \label{sec:baer_affine}

    Fix \(\cL\in\cHCM(M)\), precartesian coextensions \(\cE,\cE'\in\Pcoex(M,\cL)\), and
    cleavages \(\kappa,\kappa'\). We will use the pushforward and pullback
    constructions we just developed in the preceding two sections, to construct a
    symmetric monoidal structure on \(\Pcoex(M,\cL)\). This constructed operation will
    be called the Baer sum, much as in the abelian case.

    For \(\cE, \cE'\), denote \(\cE\times\cE' = (0\to \cL\times\cL\to N\times N'\to
    M\times M\to 0)\) to be the componentwise product coextension of \(M\times M\) by
    \(\cL\times\cL\), with basepoint \(\bigl(m,(\kappa(m),\kappa'(m))\bigr)\) over
    \(m\in M\).

    \begin{De} \label{def:box_product_precart}
      Define the \emph{box product} of \(\cE, \cE'\) as
      \[ \cE\boxtimes\cE' \;:=\; \Delta^*(\cE\times\cE'), \]
      where \(\Delta\colon M\to M\times M\) is the diagonal.
    \end{De}

    \begin{Le}
      \(\cE\boxtimes\cE' \in \Pcoex(M,\cL\times\cL)\).
    \end{Le}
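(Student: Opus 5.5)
The plan is to reduce the statement to two earlier results: first check that the componentwise product \(\cE\times\cE'\) is a precartesian coextension of \(M\times M\) by the external product system, then apply Proposition~\ref{pro:pullback_welldef} to the diagonal \(\Delta\colon M\to M\times M\). The only remaining work is to identify the pulled-back system \(\Delta^*(\cL\boxtimes\cL)\) with the pointwise product system \(\cL\times\cL\in\cHCM(M)\).

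\emph{Step 1: the product is a coextension.} Regard \(\cL\boxtimes\cL\) as the system over \(M\times M\) given by \((m_1,m_2)\mapsto \cL(m_1)\times\cL(m_2)\). A morphism \((r_1,r_2)\) acts by \(r_{1*}\times r_{2*}\), and functoriality holds componentwise. The map \(\pi\times\pi'\colon N\times N'\to M\times M\) is a surjective monoid homomorphism, and its fibre over \((m_1,m_2)\) is \(\pi^{-1}(m_1)\times\pi'^{-1}(m_2)\). We let \(\cL(m_1)\times\cL(m_2)\) act on this fibre coordinatewise.
\begin{itemize}
  \item[(i)] \emph{Precartesian condition.} Take \((\kappa(m_1),\kappa'(m_2))\) as the basepoint. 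For \((n,n')\) in the fibre, the unique pair \((x,x')\) with \((x,x')\pt(\kappa(m_1),\kappa'(m_2))=(n,n')\) is \((\xi(n),\xi'(n'))\). Existence and uniqueness follow coordinate by coordinate from the precartesian condition for \(\cE\) and \(\cE'\).
  \item[(ii)] \emph{Compatibility.} Writing everything in coordinates, the compatibility identity for the product is just the pair of identities Definition~\ref{def:pcoext}(ii) for \(\cE\) and for \(\cE'\). The only thing to keep track of is that \((m_1',m_2')_*\) acts as \((m'_{1*},m'_{2*})\).
\end{itemize}

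\emph{Step 2: pull back along \(\Delta\).} Proposition~\ref{pro:pullback_welldef} now shows that \(\Delta^*(\cE\times\cE')\) is a precartesian coextension of \(M\) by \(\Delta^*(\cL\boxtimes\cL)\). Its cleavage is \(m\mapsto\bigl((\kappa(m),\kappa'(m)),m\bigr)\), which matches the basepoint specified in the text. Unwinding the definitions gives
\[
  (\Delta^*(\cL\boxtimes\cL))(m)=\cL(m)\times\cL(m),
\]
and for \(r\colon m\to rm\) in \(\bH(M)\) the structure map is \(r_*\times r_*\), because \(\Delta(r)=(r,r)\). This is exactly the pointwise product \(\cL\times\cL\) in \(\cHCM(M)\), since limits there are computed pointwise. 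Hence \(\cE\boxtimes\cE'\in\Pcoex(M,\cL\times\cL)\).

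\emph{Expected obstacle.} There is no real obstacle here. The one point needing care is notational: the paper writes the product coextension as being over \(\cL\times\cL\), while strictly it lives over the external product on \(M\times M\). One must check that pulling back along \(\Delta\) identifies this external product with the internal product \(\cL\times\cL\) on \(M\), including the structure maps.
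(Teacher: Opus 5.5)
Your proposal is correct and follows the paper's argument: verify coordinatewise that \(\cE\times\cE'\) is a precartesian coextension of \(M\times M\), then apply Proposition~\ref{pro:pullback_welldef} to the diagonal \(\Delta\). Your explicit check that \(\Delta^*\) of the external product system is the pointwise product \(\cL\times\cL\), with structure maps \(r_*\times r_*\), fills in a step the paper leaves implicit.
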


    \begin{proof}
      \(\cE\times\cE'\) is a precartesian coextension of \(M\times M\) by
      \(\cL\times\cL\) (basepoints and their uniqueness properties are checked in each
      coordinate separately), and pull-back of a precartesian coextension along
      \emph{any} monoid homomorphism -- here the diagonal \(\Delta\) -- is again
      precartesian by Proposition~\ref{pro:pullback_welldef}, which uses nothing beyond
      precartesianness of its input.
    \end{proof}

    \begin{De}
      The \emph{Baer sum} of \(\cE,\cE'\in\Pcoex(M,\cL)\) is
      \[ \cE+\cE' \;:=\; \nabla_*(\cE\boxtimes\cE'), \]
      where \(\nabla\colon\cL\times\cL \to \cL\) is the fold map \(\nabla(x,x')=xx'\).
    \end{De}

    It is a direct consequence of Prospoition~\ref{pro:pushforward_morphism} that the
    Baer sum lands again in \(\Pcoex(M,\cL)\).

    \begin{Le} \label{le:pushforward_functorial_L}
      Let \(\alpha\colon\cL\to\cL'\) be a morphism of systems. The assignment
      \(\cE\mapsto\alpha_*\cE\) extends to a functor \(\alpha_*\colon\Pcoex(M,\cL)\to
      \Pcoex(M,\cL')\).
    \end{Le}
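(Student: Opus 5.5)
To make \(\alpha_*\) a functor we must say what it does on morphisms and then check identities and composition. Since \(\Pcoex(M,\cL)\) is a groupoid by Corollary~\ref{cor:Pcoex_groupoid}, a morphism is a triple \((\id_\cL, h, \id_M)\colon \cE\to\cE'\) with \(\pi'\circ h=\pi\), \(h(x\pt n)=x\pt h(n)\), and \(h\) sending basepoints to basepoints.

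Write \(\alpha_*\cE=(K,\sigma)\) and \(\alpha_*\cE'=(K',\sigma')\), built from cleavages \(\kappa,\kappa'\) with coordinate maps \(\xi,\xi'\). We set
\[
  \alpha_*h\colon K\to K',\qquad [y,n]\;\mapsto\;[y,h(n)].
\]
This is the forced choice, because it makes the square \(\alpha_*h\circ\rho=\rho'\circ h\) commute with the maps of Proposition~\ref{pro:pushforward_morphism}.

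\emph{Well-definedness.} This is the main step. Suppose \((y_1,n_1)\sim(y_2,n_2)\), with \(\pi(n_1)=\pi(n_2)=m\). We need
\[
  y_1\cdot\alpha_m(\xi'(h(n_1)))\;=\;y_2\cdot\alpha_m(\xi'(h(n_2))).
\]
Since \(h(\kappa(m))\) is a basepoint of \(\pi'^{-1}(m)\), Lemma~\ref{le:basepoints_invertibly_related} gives a unique unit \(u_m\in\cL(m)^\times\) with \(h(\kappa(m))=u_m\pt\kappa'(m)\). Then
\[
  h(n)=h(\xi(n)\pt\kappa(m))=\xi(n)\pt h(\kappa(m))=(\xi(n)\cdot u_m)\pt\kappa'(m),
\]
so uniqueness at the basepoint \(\kappa'(m)\) yields \(\xi'(h(n))=\xi(n)\cdot u_m\). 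Therefore
\[
  y_i\cdot\alpha_m(\xi'(h(n_i)))=y_i\cdot\alpha_m(\xi(n_i))\cdot\alpha_m(u_m),
\]
and the required equality follows by multiplying the defining relation of \(\sim\) by \(\alpha_m(u_m)\). No inverses are used anywhere, only commutativity. Alternatively, one can choose \(\kappa'=h\circ\kappa\), as the remark preceding Lemma~\ref{lem:short5_precart} allows (\(h\) is an isomorphism here). Then \(u_m=e_m\), though one must still note that \(\alpha_*\cE'\) does not depend on the cleavage up to canonical isomorphism.

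\emph{Morphism axioms.} These are direct checks on representatives:
\begin{itemize}
  \item multiplicativity: \([y,n][z,n']\mapsto[\pi(n')_*(y)\cdot\pi(n)_*(z),\,h(nn')]\), which equals the product of the images because \(h\) is multiplicative and \(\pi'h=\pi\);
  \item compatibility with projections: \(\sigma'\circ\alpha_*h=\sigma\);
  \item equivariance: \(u\pt[y,n]=[u\cdot y,n]\mapsto[u\cdot y,h(n)]=u\pt[y,h(n)]\);
  \item basepoints: the basepoint \([e_m,\kappa(m)]\) goes to \([e_m,h(\kappa(m))]=[e_m,\,u_m\pt\kappa'(m)]=[\alpha_m(u_m),\kappa'(m)]=\alpha_m(u_m)\pt[e_m,\kappa'(m)]\). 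Since \(\alpha_m(u_m)\) is a unit, this is again a basepoint by Lemma~\ref{le:basepoints_invertibly_related}. The middle equality uses the identity \([e_m,x\pt n]=[\alpha_m(x),n]\) from the proof of Proposition~\ref{pro:pushforward_morphism}.
\end{itemize}

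\emph{Functoriality.} Both identities are immediate from the formula: \(\alpha_*(\id)=\id\), and \(\alpha_*(h'\circ h)[y,n]=[y,h'h(n)]=\alpha_*h'(\alpha_*h[y,n])\). The only real obstacle is the well-definedness step, specifically tracking the discrepancy unit \(u_m\) between \(h\circ\kappa\) and \(\kappa'\).
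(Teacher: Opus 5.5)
Your proof is correct and has the same structure as the paper's. You use the same map \([y,n]\mapsto[y,h(n)]\), then check well-definedness, the morphism axioms and functoriality in the same way.

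The one real difference is how the cleavage of the target is handled. The paper invokes the remark preceding Lemma~\ref{lem:short5_precart} to choose the target cleavage as \(h\circ\kappa\). This gives \(\xi''(h(n))=\xi(n)\) directly, and chosen basepoints map exactly onto chosen basepoints. You instead keep an arbitrary cleavage fixed for each object and carry the discrepancy unit \(u_m\), obtaining \(\xi'(h(n))=\xi(n)\cdot u_m\). You then finish by multiplying through by the unit \(\alpha_m(u_m)\).

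The paper's version is shorter. As written, though, it lets the cleavage of the target depend on the morphism \(h\). That is awkward for a functor, since an object can receive several different morphisms while its image \(\alpha_*\cE'\) should be fixed once. Your version avoids this.

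Your computation also settles the caveat you raise, in a stronger form. Suppose each \(\kappa(m)\) is replaced by \(v_m\pt\kappa(m)\) with \(v_m\in\cL(m)^\times\). Then \(\xi(n)\) becomes \(\xi(n)\cdot v_m^{-1}\), so both sides of condition (ii) in Construction~\ref{const:pushforward} are multiplied by the same unit \(\alpha_m(v_m)^{-1}\), and \(\sim\) is unchanged. The multiplication and the action on \(K\) do not involve \(\kappa\) at all. Hence \(\alpha_*\cE\) is literally independent of the cleavage, not merely up to canonical isomorphism, which justifies the paper's shortcut after the fact.
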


    \begin{proof}
      Let us expand the functor on morphisms. For \(h\colon\cE\to\cE'' \in
      \Pcoex(M,\cL)\), define
      \[ \alpha_*(h)\colon\alpha_*\cE\to\alpha_*\cE'', \qquad [y,n]\mapsto[y,h(n)]. \]
      We start by proving that \(\alpha_*(h)\) is well-defined on the equivalence
      classes. Let \((y,n)\sim(y_1,n_1)\), we must show that
      \((y,h(n))\sim(y_1,h(n_1))\) in \(\alpha_*\cE''\). Note that
      \((y,n)\sim(y_1,n_1)\) means \(\pi(n)=\pi(n_1)\) and \(y\cdot \alpha_m(\xi(n)) =
      y_1\cdot\alpha_m(\xi(n_1))\). Since \(h\) is a morphism with \(\lambda=\id_\cL\),
      it respects basepoints and the action. Denote \(m = \pi(n)\). It follows that
      \[
        h(\xi(n)\pt \kappa(m)) \;=\; \xi(n)\pt h(\kappa(m)) \;=\; \xi(n)\pt\kappa''(m),
      \]
      which, by uniqueness of \(\xi''\), now gives \(\xi''(h(n)) = \xi(n)\), and
      likewise \(\xi''(h(n_1)) = \xi(n_1)\).

      As \(h\) commutes with the projections, we have \(\pi''(h(n)) = \pi(n) = m =
      \pi(n_1) = \pi''(h(n_1))\), so that the pairs \((y, h(n))\) and \((y_1, h(n_1))\)
      do indeed lie in \(\widetilde{K''}\) and satisfy the first of the two conditions
      defining \(\sim\). For the second condition, we substitute the two identities we
      have just obtained and get
      \[
        y\cdot\alpha_m(\xi''(h(n))) \;=\; y\cdot\alpha_m(\xi(n)) \;=\;
        y_1\cdot\alpha_m(\xi(n_1)) \;=\; y_1\cdot\alpha_m(\xi''(h(n_1))),
      \]
      which is exactly what we needed. Hence \(\alpha_*(h)\) is well-defined.

      \medskip

      It remains to see that \((\id_{\cL'}, \alpha_*(h), \id_M)\) is a morphism of
      precartesian coextensions in the sense of Definition~\ref{def:morphism_precoext}.
      Using \(\pi''\circ h = \pi\) once more, we obtain
      \begin{eqnarray*}
        \alpha_*(h)\bigl([y,n]\cdot[z,n_1]\bigr) & = & \bigl[\pi(n_1)_*(y)\cdot\pi(n)_*(z),\; h(nn_1)\bigr]              \\
                                                 & = & \bigl[\pi''(h(n_1))_*(y)\cdot\pi''(h(n))_*(z),\; h(n)h(n_1)\bigr] \\
                                                 & = & \alpha_*(h)[y,n]\cdot\alpha_*(h)[z,n_1],
      \end{eqnarray*}
      and the identity \([e_{1_M}, 1_N]\) is sent to \([e_{1_M}, h(1_N)] = [e_{1_M},
      1_{N''}]\), so that \(\alpha_*(h)\) is a monoid homomorphism. Compatibility with
      the projections to \(M\) reads \(\sigma''(\alpha_*(h)[y,n]) = \pi''(h(n)) =
      \pi(n) = \sigma[y,n]\). Since the accompanying morphism of systems is the
      identity of \(\cL'\), the condition on the actions amounts to
      \[
        \alpha_*(h)\bigl(u\pt[y,n]\bigr) \;=\; [u\cdot y,\, h(n)] \;=\;
        u\pt\alpha_*(h)[y,n]
      \]
      for \(u\in\cL'(m)\), which holds by the very definition of the two actions.
      Finally, writing \(\kappa_1'\) and \(\kappa_2'\) for the induced cleavages of
      \(\alpha_*\cE\) and \(\alpha_*\cE''\) provided by
      Lemma~\ref{le:pushforward_welldef}, basepoints are sent to basepoints, as
      \[
        \alpha_*(h)(\kappa_1'(m)) \;=\; [e_m,\, h(\kappa(m))] \;=\; [e_m,\,
        \kappa''(m)] \;=\; \kappa_2'(m),
      \]
      where the middle equality is the compatibility of the chosen cleavages,
      Equation~\eqref{eq:precartesian_kappa}.

      Functoriality is now immediate. The identity of \(\cE\) is sent to the map
      \([y,n]\mapsto[y,n]\), which is the identity of \(\alpha_*\cE\), and for two
      composable morphisms \(h\) and \(g\) in \(\Pcoex(M,\cL)\) we have
      \[
        \alpha_*(g\circ h)[y,n] \;=\; [y,\, g(h(n))] \;=\; \alpha_*(g)\bigl([y,
        h(n)]\bigr) \;=\; \bigl(\alpha_*(g)\circ\alpha_*(h)\bigr)[y,n],
      \]
      which finishes the proof.
    \end{proof}

    \begin{Le} \label{le:pushforward_functorial_M}
      Let \(\varphi\colon M'\to M\) be a monoid homomorphism. We have the induced
      functor \(\varphi^*\colon\Pcoex(M,\cL)\to\Pcoex(M',\varphi^*\cL)\).
    \end{Le}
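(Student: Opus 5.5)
The plan is to mirror the proof of Lemma~\ref{le:pushforward_functorial_L}, now for pullbacks. On objects, I define \(\varphi^*\cE\) as in the pullback construction. Proposition~\ref{pro:pullback_welldef} already shows that \(\varphi^*\cE\in\Pcoex(M',\varphi^*\cL)\), with induced cleavage \(\kappa'(m')=(\kappa(\varphi(m')),m')\). On morphisms \((\id_\cL,h,\id_M)\colon\cE\to\cE''\), I set \(\varphi^*h(n,m'):=(h(n),m')\). Proposition~\ref{prop:pullback_functor}(1) is precisely the statement that \((\id_{\varphi^*\cL},\varphi^*h,\id_{M'})\) is a precartesian morphism. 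So only three things remain to check: that the construction is independent of the chosen cleavages, that identities are preserved, and that composition is preserved.

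\textbf{Cleavages.} Definition~\ref{def:morphism_precoext} only requires that \(h\) send basepoints to basepoints, not chosen basepoints to chosen basepoints. So I will check basepoint preservation for \(\varphi^*h\) directly, without relying on \(h(\kappa(m))=\kappa''(m)\). By Lemma~\ref{le:cartesian_criterion}, every basepoint of \(\sigma^{-1}(m')\) has the form \(u\pt\kappa'(m')=(u\pt\kappa(\varphi(m')),m')\) with \(u\in\cL(\varphi(m'))^\times\), so its first coordinate is a basepoint of \(\pi^{-1}(\varphi(m'))\). Applying \(h\) gives \(u\pt h(\kappa(\varphi(m')))\). Here \(h(\kappa(\varphi(m')))\) is a basepoint of \(\cE''\), and acting on it by a unit again gives a basepoint, by the last line of the proof of Lemma~\ref{le:basepoints_invertibly_related}. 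Hence \((h(u\pt\kappa(\varphi(m'))),m')\) is a basepoint of \(\varphi^*\cE''\), again by Lemma~\ref{le:cartesian_criterion} applied to \(\varphi^*\cE''\) with its induced cleavage. This removes the dependence on the compatibility of cleavages that Proposition~\ref{prop:pullback_functor}(1) tacitly uses.

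\textbf{Functoriality.} This is a direct computation. We have \(\varphi^*(\id_N)(n,m')=(n,m')\), so identities go to identities. We also have \(\varphi^*(g\circ h)(n,m')=(g(h(n)),m')=\varphi^*g(\varphi^*h(n,m'))\), so composition is preserved. Because \(\varphi^*N\) is an honest fibre product with no quotient, no well-definedness issue arises, which is the main difference from the pushforward case.

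\textbf{Main obstacle.} The only delicate point is the basepoint bookkeeping just described, i.e.\ ensuring the construction does not depend on compatible cleavage choices. Everything else is formal. By Corollary~\ref{cor:Pcoex_groupoid}, all these morphisms are isomorphisms, and the functor automatically sends inverses to inverses.
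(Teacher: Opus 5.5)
Your proposal is correct and follows the paper's proof. Objects are handled by Proposition~\ref{pro:pullback_welldef}, morphisms by Proposition~\ref{prop:pullback_functor}(1), and then identities and composition are checked directly on \((n,m')\mapsto(h(n),m')\). Your extra basepoint check is a harmless refinement that avoids the tacit choice \(h\circ\kappa=\kappa''\) in Proposition~\ref{prop:pullback_functor}(1); it amounts to observing that \((n,m')\) is a basepoint of \(\sigma^{-1}(m')\) exactly when \(n\) is a basepoint of \(\pi^{-1}(\varphi(m'))\).
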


    \begin{proof}
      This is an amalgamation of previous results: On objects, we set
      \(\cE\mapsto\varphi^*\cE\), which lands in \(\Pcoex(M',\varphi^*\cL)\) by
      Proposition~\ref{pro:pullback_welldef}. On morphisms, we map
      \(h\colon\cE\to\cE''\in \Pcoex(M,\cL)\) to \(\varphi^*(h) := \varphi^*h \colon
      \varphi^*\cE\to\varphi^*\cE''\in \Pcoex(M',\varphi^*\cL)\) by
      Proposition~\ref{prop:pullback_functor}(1).

      All that is left to check is that this assignment respects identities and
      composition. Since \(\varphi^*h\) acts as \((n,m')\mapsto(h(n), m')\), leaving
      the second coordinate untouched, the identity of \(\cE\) is sent to the identity
      of \(\varphi^*\cE\), and for two composable morphisms \(h\) and \(g\) we have
      \[
        \varphi^*(g\circ h)(n,m') \;=\; ( g(h(n)), \,m') \;=\;
        \varphi^*(g)\bigl(h(n), m'\bigr) \;=\;
        \bigl(\varphi^*(g)\circ\varphi^*(h)\bigr)(n,m'),
      \]
      as desired.
    \end{proof}

    \begin{Th} \label{thm:monoidal_Pcoex_affine}
      The Baer sum \(+\) makes \(\Pcoex(M,\cL)\) into a symmetric monoidal groupoid,
      with unit \(\cL\rtimes M\).
    \end{Th}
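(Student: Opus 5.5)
To prove Theorem~\ref{thm:monoidal_Pcoex_affine}, I would first make the Baer sum explicit, so that every coherence check becomes a direct computation. Unwinding Definition~\ref{def:box_product_precart} and Construction~\ref{const:pushforward}, the monoid underlying \(\cE+\cE'\) is the quotient of
\[
  \{(y,n,n') \mid n\in N,\; n'\in N',\; \pi(n)=\pi'(n')=m,\; y\in\cL(m)\}
\]
by the relation
\[
  (y,n,n')\sim(z,p,p') \iff y\cdot\xi(n)\cdot\xi'(n') \;=\; z\cdot\xi(p)\cdot\xi'(p').
\]
Each class therefore has a canonical representative \([y\,\xi(n)\xi'(n'),\kappa(m),\kappa'(m)]\), and this gives a bijection of the fibre over \(m\) with \(\cL(m)\). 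Multiplication is described by Equation~\eqref{eq:xi_product}: it multiplies the two ``cocycle'' correction terms \(c\) and \(c'\).

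Functoriality of \(+\) in both variables comes from Lemma~\ref{le:pushforward_functorial_L} and Lemma~\ref{le:pushforward_functorial_M}, applied to \(h\times h'\), which is a morphism of product coextensions. Since \(\Pcoex(M,\cL)\) is a groupoid by Corollary~\ref{cor:Pcoex_groupoid}, every structure map will automatically be an isomorphism once it is shown to be a morphism. So the remaining work is to write down the structure maps and check that they are morphisms in the sense of Definition~\ref{def:morphism_precoext}.

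The structure maps I would use are these:
\begin{itemize}
  \item Symmetry \(\cE+\cE'\to\cE'+\cE\): the map \([y,n,n']\mapsto[y,n',n]\). It is well defined because \(\cL(m)\) is commutative.
  \item Associativity: \([y,[z,n,n'],n'']\mapsto[yz,n,[e,n',n'']]\), or better, through a common triple model
  \[
    \{[y,n,n',n'']\}\quad\text{with}\quad y\,\xi(n)\xi'(n')\xi''(n'')
  \]
  as invariant.
  \item Unit \((\cL\rtimes M)+\cE\to\cE\): the map \([y,(x,m),n]\mapsto (yx)\pt n\).
\end{itemize}
For each map one checks that it preserves the invariant, which gives well-definedness, and then multiplicativity, the action, and basepoints. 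Basepoints go to basepoints because \([e_m,\kappa(m),\kappa'(m)]\) is sent to a basepoint of the same form, and \((e_m,m)\) has \(\xi\)-value \(e_m\).

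The coherence axioms (pentagon, hexagon, triangle, and \(s^2=\id\)) reduce to the observation that all the structure maps act on the canonical representatives as permutations or regroupings of the factors in \(\cL(m)\). Since \(\cL(m)\) is commutative, any two composites with the same source and target agree, so every diagram commutes. Naturality follows in the same way, because morphisms \(h\) in \(\Pcoex(M,\cL)\) satisfy \(\xi''\circ h=\xi\) once compatible cleavages are chosen, as in the proof of Lemma~\ref{le:pushforward_functorial_L}.

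I expect the main obstacle to be that everything depends on the choice of cleavages \(\kappa,\kappa'\). The relation \(\sim\) is stated using \(\xi\), so I must show that different cleavages produce the same equivalence relation up to a canonical isomorphism; otherwise neither \(+\) nor the structure maps are well defined. My first attempt would use Lemma~\ref{le:basepoints_invertibly_related}. Changing the cleavage \(\kappa\) replaces \(\xi\) by \(u_m^{-1}\xi\) for units \(u_m\in\cL(m)^\times\). This multiplies both sides of condition~(ii) by the same unit, so the relation is unchanged and the same quotient set is obtained. The remaining checks are the multiplicativity of the associator and of the unit map, which require matching the correction terms \(c\) through Equation~\eqref{eq:xi_product}. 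This is the computational heart of the proof, but it involves only bookkeeping in commutative monoids and no inverses beyond those of units.
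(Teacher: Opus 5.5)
Your proposal is correct and follows essentially the same route as the paper: the groupoid property via Corollary~\ref{cor:Pcoex_groupoid}, bifunctoriality via Lemmas~\ref{le:pushforward_functorial_L} and~\ref{le:pushforward_functorial_M}, symmetry from the swap, associativity through the triple box product, and a unitor of the form $[y,(x,m),n]\mapsto (yx)\pt n$ (the paper uses the mirror-image right unitor). In addition, your explicit invariant $y\,\xi(n)\xi'(n')$ and your observation that changing a cleavage multiplies both sides of the defining relation by the same unit supply the cleavage-independence of the Baer sum and the coherence and naturality checks, which the paper's proof leaves implicit.
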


    \begin{proof}
      That \(\Pcoex(M,\cL)\) is a groupoid is Corollary~\ref{cor:Pcoex_groupoid}. That
      \(+\) is functorial in each variable are Lemmas~\ref{le:pushforward_functorial_L}
      and~\ref{le:pushforward_functorial_M}. What remains to check is that it satisfies
      all required identities, such as associativity condition and symmetry.

      Symmetry is simply induced by the swap \(\tau(x,x')=(x',x)\), giving us the
      isomorphism \(\cE+\cE'\xto{\sim}\cE'+\cE\). It is associativie since both
      \((\cE+\cE')+\cE''\) and \(\cE+(\cE'+\cE'')\) are naturally isomorphic to the
      push-forward of the triple box product \(\Delta^*(\cE\times\cE'\times\cE'')\)
      along the diagonal \((x,y,z)\mapsto xyz\).

      The unit is given by the map
      \[
        \upsilon\colon \nabla_*\Delta^*(\cE\times(\cL\rtimes M)) \to \cE, \qquad
        [z,(n,(x,m))] \mapsto (z\cdot x)\pt n,
      \]
      with \(m = \pi(n)\). As we are in a groupoid, any morphism is an isomorphism and
      thus, so is \(\upsilon\).
    \end{proof}

\section{Cohomological classification}~\label{sec:cohomology_affine}

\subsection{The cocycle monoid}

    Fix \(M\) and \(\cL \in \cHCM(M)\). We define the algebraic data classifying
    precartesian coextensions. Following Patchkoria's monoid cohomology
    \cite{patchkoria}, we group the alternating terms in the definitions of the cochain
    by sign and put them on aether side of the equality, to avoid the need for
    inverses. We also use the multiplicative notation as we are working with monoids
    and not abelian groups.

    We will use the notation along the lines of \(f: M \to \cL\) and treat it almost
    like a function. Of course, this is not directly a map, as \(\cL\) is a not a set.
    What we mean by that is, of course, the "nearest sensible thing", meaning, for each
    \(m\in M\), \(f(m)\in \cL(m)\). see also \cite{p15} for the same convention. We
    will also assume that such ''maps'' are reduced, meaning that the function take
    value \(1\) if one of the variables is \(1_M\).

    \begin{De}
      Define the commutative monoids under (pointwise) multiplication
      \begin{align*}
        \sC^0(M, \cL) & := \bigl\{ g \colon M \to \cL \;\bigm|\; g(m) \in \cL(m) \;\forall m \bigr\},                                    \\
        \sC^1(M, \cL) & := \bigl\{ f \colon M \times M \to \cL \;\bigm|\; f(a,b) \in \cL(ab),\;\; f(a,b) = f(b,a) \;\forall a,b \bigr\}, \\
        \sC^2(M, \cL) & := \bigl\{ h \colon M \times M \times M \to \cL \;\bigm|\; h(a,b,c) \in \cL(abc),\;\; h(a,b,c) \cdot h(b,c,a) = h(b,a,c) \;\forall a,b,c \bigr\}.
      \end{align*}
    \end{De}

    We would now like to define maps between these monoids and take the cohomology.
    This, however, is not as straightforward for monoids as for abelian groups.

    \begin{De}
      Define the \(0\)-th dimenional cohomology monoid \(\sD^0(M,\cL)\) to be the
      collection of all reduced ''maps'' \(g:M\to \cL\) for which
      \[ f(ab) \;=\; a_*f(b)b_*f(a) \]
      hold for all \(a,b\in M\).
    \end{De}
    \begin{De}
      Define the \emph{1-cocycle monoid} as
      \[
        \sZ^1(M, \cL) \;:=\;
        \left\{
        \begin{array}{lcl}
          f \in \sC^1(M, \cL) & \bigm| & a_*(f(b,c)) \cdot f(a,bc) \;=\; f(ab,c) \cdot c_*(f(a,b))\quad \forall a,b,c \in M \\[2pt]
                              &        & f(1_M, a) \;=\; e_a \quad \forall a \in M
        \end{array}
        \right\}.
      \]
    \end{De}

    \begin{Rem}[Relation to the Grillet complex]
      Note that for monoids, unlike for groups, the unit needs a little more attention,
      and is not automatically respected. Hence, the added condition \(f(1_M, a) \;=\;
      e_a\). However, when the cochain monoids happen to be (abelian) groups, the above
      still simplifies exactly to the Grillet complex.
    \end{Rem}

    \begin{Pro} \label{prop:cocycle_coboundary_submonoids}
      \(\sZ^1(M, \cL)\) is a submonoids of \(\sC^1(M, \cL)\).
    \end{Pro}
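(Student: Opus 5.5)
To show that \(\sZ^1(M,\cL)\) is a submonoid of \(\sC^1(M,\cL)\), I will check three things directly from the definitions: that \(\sC^1(M,\cL)\) is itself a commutative monoid under pointwise multiplication, that its unit lies in \(\sZ^1\), and that \(\sZ^1\) is closed under the product. No earlier result beyond the functoriality of \(\cL\) is needed.

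First, the pointwise product \((f\cdot f')(a,b) := f(a,b)\cdot f'(a,b)\) is computed in the commutative monoid \(\cL(ab)\). It is symmetric whenever \(f\) and \(f'\) are, because \(f(a,b)f'(a,b) = f(b,a)f'(b,a)\). Associativity and commutativity are inherited pointwise. The unit is the constant ``map'' \(\varepsilon(a,b) := e_{ab}\), which is symmetric and reduced. So \(\sC^1\) is a commutative monoid.

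Next I check that \(\varepsilon \in \sZ^1\). Every \(r_*\) is a monoid homomorphism, so \(a_*(e_{bc}) = e_{abc}\) and \(c_*(e_{ab}) = e_{abc}\). Both sides of the cocycle identity then reduce to \(e_{abc}\). The normalisation condition \(\varepsilon(1_M,a) = e_a\) holds by definition.

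Closure under products is the main step. Take \(f, f' \in \sZ^1\). Since \(a_*\) is a monoid homomorphism \(\cL(bc)\to\cL(abc)\), we have \(a_*((ff')(b,c)) = a_*(f(b,c))\cdot a_*(f'(b,c))\), and likewise for \(c_*\). Using commutativity of \(\cL(abc)\), the left-hand side of the cocycle identity for \(ff'\) can be regrouped as
\[
\bigl(a_*(f(b,c))\, f(a,bc)\bigr)\cdot\bigl(a_*(f'(b,c))\, f'(a,bc)\bigr).
\]
Applying the cocycle identity to each factor separately turns this into
\[
\bigl(f(ab,c)\, c_*(f(a,b))\bigr)\cdot\bigl(f'(ab,c)\, c_*(f'(a,b))\bigr),
\]
which regroups to the right-hand side for \(ff'\). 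For normalisation, \((ff')(1_M,a) = e_a e_a = e_a\).

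Nothing here is a real obstacle. The two points that need care are that each \(r_*\) is a monoid homomorphism, so it preserves both units and products, and that each \(\cL(m)\) is commutative, which is what lets the terms be regrouped. The cocycle condition is written in Patchkoria's ``positive terms on each side'' form, with no inverses anywhere, so multiplying two instances of it together stays inside the same form. That is exactly why the argument works for monoids and not only for groups.
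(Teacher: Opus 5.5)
Your proposal is correct and matches the paper's proof: closure comes from splitting \(a_*\) and \(c_*\) over the product, regrouping in the commutative monoid \(\cL(abc)\), and applying the cocycle identity to each factor, while the unit \(e(a,b)=e_{ab}\) and the normalisation \(f(1_M,a)=e_a\) are checked directly. Your extra verification that \(\sC^1(M,\cL)\) is itself closed under the pointwise product (symmetry is preserved) is a harmless addition that the paper leaves implicit.
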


    \begin{proof}
      We need to show that \(\sZ^1(M, \cL)\) is closed under pointwise multiplication.
      Let \(f_1, f_2 \in \sZ^1\). Using that \(a_*\) is a monoid homomorphism and that
      \(\cL(abc)\) is commutative, we can write the following computation:
      \begin{align*}
        a_*((f_1 \cdot f_2)(b,c)) \cdot (f_1 \cdot f_2)(a,bc)
         & = a_*(f_1(b,c)) \cdot a_*(f_2(b,c)) \cdot f_1(a,bc) \cdot f_2(a,bc)                         \\
         & = \bigl[a_*(f_1(b,c)) \cdot f_1(a,bc)\bigr] \cdot \bigl[a_*(f_2(b,c)) \cdot f_2(a,bc)\bigr] \\
         & = \bigl[f_1(ab,c) \cdot c_*(f_1(a,b))\bigr] \cdot \bigl[f_2(ab,c) \cdot c_*(f_2(a,b))\bigr] \\
         & = f_1(ab,c) \cdot f_2(ab,c) \cdot c_*(f_1(a,b)) \cdot c_*(f_2(a,b))                         \\
         & = (f_1 \cdot f_2)(ab,c) \cdot c_*((f_1 \cdot f_2)(a,b)).
      \end{align*}
      The identity element of \(\sC^1\) is \(e\colon M \times M \to \cL\) with \(e(a,b)
      = e_{ab}\), which clearly satisfies the cocycle condition and \(e(1_M, a) =
      e_a\), so \(e \in \sZ^1\). That it is the unit, meaning \((f_1 \cdot f_2)(1_M, a)
      = f_1(1_M,a) \cdot f_2(1_M,a) = e_a \cdot e_a = e_a\) always holds, is likewise
      immediate.\\[2pt]
    \end{proof}

\subsection{The classifying category \texorpdfstring{\(\fC(M,\cL)\)}{C(M,L)}}
    \label{sec:classifying_cat}

    This section will introduce the category \(\fC(M,\cL)\) which will allow us to
    classifies precartesian coextensions.

    \begin{De} \label{def:classifying_cat}
      Define \(\fC(M, \cL)\) to be the category whose objects are elements of
      \(\sZ^1(M, \cL)\), and for \(f, f'\in \sZ^1(M, \cL)\), morphisms are elements \(g
      \in \sC^0(M, \cL)\), such that \(g(1_M) = e_{1_M}\) and
      \begin{equation} \label{eq:morphism_condition}
        f'(a,b) \cdot g(ab) \;=\; b_*(g(a)) \cdot a_*(g(b)) \cdot f(a,b)
        \qquad \forall\, a, b \in M.
      \end{equation}
      The composition is given by pointwise multiplication (\((g' \circ g)(a) := g(a)
      \cdot g'(a)\)), and the identity morphism on \(f\) is \(e \in \sC^0(M, \cL)\),
      which is defined by \(e(a) := e_a\) for all \(a \in M\).
    \end{De}

    \begin{Le} \label{le:C_is_category}
      The composition in Definition~\ref{def:classifying_cat} is well define, making
      \(\fC(M, \cL)\) indeed a category.
    \end{Le}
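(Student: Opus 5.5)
To prove Lemma~\ref{le:C_is_category}, I will verify directly from \eqref{eq:morphism_condition} that the composite of two morphisms is again a morphism. Associativity and the unit laws will then follow from pointwise multiplication in the commutative monoids \(\cL(a)\).

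\emph{Composition.} Let \(g\colon f\to f'\) and \(g'\colon f'\to f''\) be morphisms, and put \(k(a):=g(a)\cdot g'(a)\in\cL(a)\). Normalisation is immediate, since \(k(1_M)=e_{1_M}\cdot e_{1_M}=e_{1_M}\). For the morphism condition I multiply the two instances of \eqref{eq:morphism_condition} in \(\cL(ab)\):
\[
f'(a,b)\,g(ab)\cdot f''(a,b)\,g'(ab) \;=\; b_*(g(a))\,a_*(g(b))\,f(a,b)\cdot b_*(g'(a))\,a_*(g'(b))\,f'(a,b).
\]
Since \(a_*\) and \(b_*\) are monoid homomorphisms and \(\cL(ab)\) is commutative, this rearranges to
\[
f'(a,b)\cdot\bigl(f''(a,b)\,k(ab)\bigr) \;=\; f'(a,b)\cdot\bigl(b_*(k(a))\,a_*(k(b))\,f(a,b)\bigr).
\]

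\emph{Main obstacle.} The factor \(f'(a,b)\) appears on both sides, and we need to cancel it. In a monoid this is not automatic, and it is exactly where the no-inverses bookkeeping matters. I would not try to cancel \(f'(a,b)\) itself. Instead I would rewrite \(f'(a,b)\) using the first equation, \(f'(a,b)g(ab)=b_*(g(a))a_*(g(b))f(a,b)\), and try to chain the equalities so that no cancellation is needed. Concretely, I would substitute the second equation into \(f''(a,b)k(ab)=\bigl(f''(a,b)g'(ab)\bigr)g(ab)\), obtaining
\[
f''(a,b)\,k(ab) \;=\; b_*(g'(a))\,a_*(g'(b))\,\bigl(f'(a,b)\,g(ab)\bigr),
\]
and then substitute the first equation for the bracket. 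This yields
\[
f''(a,b)\,k(ab) \;=\; b_*(g'(a))\,a_*(g'(b))\,b_*(g(a))\,a_*(g(b))\,f(a,b) \;=\; b_*(k(a))\,a_*(k(b))\,f(a,b),
\]
which is precisely \eqref{eq:morphism_condition} for \(k\colon f\to f''\). This direct substitution uses only associativity and commutativity in \(\cL(ab)\) and the homomorphism property of \(a_*,b_*\), so no cancellation is needed after all.

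\emph{Identities and associativity.} For the identity \(e\) with \(e(a)=e_a\), we have \(b_*(e_a)=e_{ab}\) and \(a_*(e_b)=e_{ab}\). Hence \eqref{eq:morphism_condition} reads \(f(a,b)=f(a,b)\), so \(e\colon f\to f\) is a morphism. Since \(e_a\) is the unit of \(\cL(a)\), it is a two-sided unit for pointwise composition. Associativity of composition is associativity of multiplication in each \(\cL(a)\). Together these show that \(\fC(M,\cL)\) is a category.
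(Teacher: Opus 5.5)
Your proof is correct and follows the paper's argument: the paper likewise substitutes one instance of \eqref{eq:morphism_condition} directly into the other in a single chain, so no cancellation is ever needed (your initial attempt of multiplying the two equations is a detour you rightly abandon). The only difference is cosmetic: you read \(g\colon f\to f'\) as \(f'(a,b)\,g(ab)=b_*(g(a))\,a_*(g(b))\,f(a,b)\), whereas the paper's proof (consistent with Lemma~\ref{le:associated_mon_hom}) places the source cocycle on the left; the computation is the same under either convention, and your explicit checks of the identity morphism and associativity fill in what the paper leaves implicit.
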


    \begin{proof}
      Let \(g \colon f'' \to f'\) and \(g' \colon f' \to f\) be morphisms and consider
      \((g' \circ g) := g \cdot g'\colon f'' \to f\). The identity condition holds as
      \((g \cdot g')(1_M) = g(1_M) \cdot g'(1_M) = e_{1_M} \cdot e_{1_M} = e_{1_M}\).
      For Condition~\eqref{eq:morphism_condition}, we write
      \begin{align*}
        f''(a,b) \cdot (g \cdot g')(ab) & = f''(a,b) \cdot g(ab) \cdot g'(ab)                                        \\
                                        & = b_*(g(a)) \cdot a_*(g(b)) \cdot f'(a,b) \cdot g'(ab)                     \\
                                        & = b_*(g(a)) \cdot a_*(g(b)) \cdot b_*(g'(a)) \cdot a_*(g'(b)) \cdot f(a,b) \\
                                        & = b_*(g(a) \cdot g'(a)) \cdot a_*(g(b) \cdot g'(b)) \cdot f(a,b)           \\
                                        & = b_*((g \cdot g')(a)) \cdot a_*((g \cdot g')(b)) \cdot f(a,b). \qedhere
      \end{align*}
    \end{proof}

    The category \(\fC(M, \cL)\) equivalent to the groupoid \([\sC^0(M, \cL)
    \xto{\partial^0} \sZ^1(M, \cL)]\) as given in \cite{p15} in the particular case
    when \(\cL\) is taking values in abelian groups. It is, however, not a groupoid in
    general. Thus, it makes sense to examine under what conditions morphism inside
    \(\fC(M, \cL)\) are isomorphisms.

    \begin{Le} \label{le:g_invertible}
      A morphism \(g: f'\to f\) is an isomorphism in \(\fC(M, \cL)\) if and only if
      \(g(a)\in \cL(a)^\times\) for all \(a\in M\).
    \end{Le}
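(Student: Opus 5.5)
We prove the two implications separately, using that composition in \(\fC(M,\cL)\) is pointwise multiplication and that the identity morphism is \(e\), with \(e(a)=e_a\).

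\emph{Only if.} Suppose \(g\colon f'\to f\) has an inverse \(g'\colon f\to f'\). Then \(g'\circ g = e\) and \(g\circ g' = e\) as morphisms. By Definition~\ref{def:classifying_cat} this means \(g(a)\cdot g'(a)=e_a\) for every \(a\in M\), and the same product appears in the other order. Since \(\cL(a)\) is commutative, \(g(a)\in\cL(a)^\times\) with inverse \(g'(a)\). This direction is immediate.

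\emph{If.} Suppose \(g(a)\in\cL(a)^\times\) for all \(a\). Define \(g'\in\sC^0(M,\cL)\) by \(g'(a):=g(a)^{-1}\). Then \(g'(1_M)=e_{1_M}^{-1}=e_{1_M}\), and pointwise \(g\cdot g'=e\). The only substantive point is that \(g'\) is a morphism \(f\to f'\), i.e. that Condition~\eqref{eq:morphism_condition} holds with the roles of \(f,f'\) swapped:
\[ f(a,b)\cdot g'(ab) \;=\; b_*(g'(a))\cdot a_*(g'(b))\cdot f'(a,b). \]

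To check this, start from the identity for \(g\),
\[ f'(a,b)\cdot g(ab) = b_*(g(a))\cdot a_*(g(b))\cdot f(a,b), \]
which holds in the commutative monoid \(\cL(ab)\). Multiply both sides by
\[ g(ab)^{-1}\cdot b_*(g(a))^{-1}\cdot a_*(g(b))^{-1}. \]
These inverses exist because \(b_*\) and \(a_*\) are monoid homomorphisms and so send units to units, with \(b_*(g(a))^{-1}=b_*(g(a)^{-1})=b_*(g'(a))\), and similarly for \(a_*(g(b))^{-1}\). After multiplying, the \(g(ab)\) terms cancel on the left and the \(b_*(g(a))\), \(a_*(g(b))\) terms cancel on the right. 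Rearranging by commutativity gives exactly the required identity for \(g'\).

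Hence \(g'\) is a morphism \(f\to f'\), and it is a two-sided inverse of \(g\).

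The one step needing care, though it is routine, is confirming that the homomorphisms \(a_*\) and \(b_*\) carry inverses to inverses. Without that, the cancellation in the if-direction would not be justified, since no other terms in \(\cL(ab)\) are assumed invertible.
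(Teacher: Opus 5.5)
Your proof is correct and takes the same route as the paper: composition is the pointwise product with identity \(a\mapsto e_a\), so invertibility is pointwise invertibility, with inverse \(a\mapsto g(a)^{-1}\). You are in fact more complete than the paper, which only asserts that \(a\mapsto g(a)^{-1}\) is the inverse without checking Condition~\eqref{eq:morphism_condition} with \(f,f'\) swapped; your cancellation argument, using that \(a_*,b_*\) preserve units, supplies exactly that verification.
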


    \begin{proof}
      This is straightforward as composition in \(\fC(M,\cL)\) is the pointwise product
      in \(\cL(a)\), and the identity morphism is \(a \mapsto e_a\). A morphism \(g
      \colon f \to f'\) is subsequently invertible in \(\fC(M,\cL)\) if and only if
      \(g(a) \in \cL(a)^\times\) for every \(a \in M\), with the inverse being
      \(g^{-1}(a) := g(a)^{-1}\).
    \end{proof}

\subsection{Classification theorem}

    \begin{Le} \label{le:cocycle_extension}
      Let \(f\in \sZ^1(M, \cL)\) be a cocyle. There exists a coextension \({}_fM \in
      \Pcoex(M, \cL)\) given by
      \[ {}_f M \;:=\; \{(x, a) \mid a \in M,\; x \in \cL(a)\}, \]
      with multiplication
      \[ (x, a)(y, b) \;:=\; (b_*(x) \cdot a_*(y) \cdot f(a,b),\; ab). \]
    \end{Le}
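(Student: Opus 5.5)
We need to check that \({}_fM\) is a commutative monoid, that \(\pi(x,a)=a\) is a surjective homomorphism, that there is a fibrewise action satisfying the precartesian and compatibility conditions, and that the unit is \((e_{1_M},1_M)\). The plan is to follow the semidirect product of Definition~\ref{def:semidirect}, which is the special case \(f=e\), inserting the cocycle factor \(f(a,b)\) throughout.

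\emph{Commutativity and associativity.} Commutativity is immediate because \(f\in\sC^1\) is symmetric and \(\cL(ab)\) is commutative. For associativity we expand both bracketings using functoriality \((bc)_*=b_*c_*\). For \(((x,a)(y,b))(z,c)\) we get
\[
\bigl((bc)_*(x)\cdot(ac)_*(y)\cdot c_*(f(a,b))\cdot(ab)_*(z)\cdot f(ab,c),\; abc\bigr),
\]
and for \((x,a)((y,b)(z,c))\) we get
\[
\bigl((bc)_*(x)\cdot(ac)_*(y)\cdot(ab)_*(z)\cdot a_*(f(b,c))\cdot f(a,bc),\; abc\bigr).
\]
These agree exactly by the cocycle identity \(a_*(f(b,c))\cdot f(a,bc)=f(ab,c)\cdot c_*(f(a,b))\) in the definition of \(\sZ^1\). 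No inverses are needed, since the identity is used as stated.

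\emph{Unit.} We compute \((e_{1_M},1_M)(y,b)=(b_*(e_{1_M})\cdot y\cdot f(1_M,b),\,b)\). Here \(b_*(e_{1_M})=e_b\) because \(b_*\) is a monoid homomorphism, and \(f(1_M,b)=e_b\) by the normalisation condition in \(\sZ^1\). Hence the product is \((y,b)\), and commutativity handles the other side. The projection \(\pi(x,a)=a\) is then clearly a surjective homomorphism.

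\emph{Action, precartesian and compatibility conditions.} We define \(z\pt(x,a):=(z\cdot x,a)\) for \(z\in\cL(a)\); this is a monoid action. We take \((e_a,a)\) as basepoint: \(z\pt(e_a,a)=(z,a)\), so each element of the fibre is reached by a unique \(z\), giving the precartesian condition. Compatibility reads
\[
(z\cdot x,a)(z'\cdot y,b)=\bigl(b_*(z)\,b_*(x)\,a_*(z')\,a_*(y)\,f(a,b),\;ab\bigr)=\bigl(b_*(z)\cdot a_*(z')\bigr)\pt\bigl((x,a)(y,b)\bigr).
\]
This follows since \(a_*,b_*\) are homomorphisms and \(\cL(ab)\) is commutative. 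Convention~\ref{conv:canonical_basepoint} is respected, as \((e_{1_M},1_M)\) is the unit.

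The only step with real content is associativity, where one must check that the placement of \(c_*\) and \(a_*\) on the cocycle terms matches the paper's convention for \(\sZ^1\). Everything else is routine bookkeeping.
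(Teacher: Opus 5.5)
Your proposal is correct and follows the paper's proof. Both expand the two bracketings via functoriality, identify the required equality with the cocycle identity in \(\sZ^1(M,\cL)\), and take \((e_a,a)\) as the basepoint of each fibre. You additionally check two things the paper's proof only asserts or omits: that \((e_{1_M},1_M)\) is the unit, via the normalisation \(f(1_M,b)=e_b\), and the compatibility condition of Definition~\ref{def:pcoext}(ii), so your version is slightly more complete.
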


    \begin{proof}
      We first show that \({}_fM\) is a commutative monoid. Commutativity is clear and
      follows form the commutativity of the monoids \(\cL(ab)\). The identity is
      \((e_{1_M}, 1_M)\). For associativity, we have
      \begin{eqnarray*}
        [{(x,a)(y,b)}](z,c) & = & (b_*(x) \cdot a_*(y) \cdot f(a,b),\; ab)(z,c)                                \\
                            & = & (c_*(b_*(x) \cdot a_*(y) \cdot f(a,b)) \cdot (ab)_*(z) \cdot f(ab,c),\; abc) \\
                            & = & (c_*(b_*(x)) \cdot c_*(a_*(y)) \cdot c_*(f(a,b)) \cdot (ab)_*(z) \cdot f(ab,c),\; abc),
      \end{eqnarray*}
      and
      \begin{eqnarray*}
        (x,a)[{(y,b)(z,c)}] & = & (x,a)(c_*(y) \cdot b_*(z) \cdot f(b,c),\; bc)                                \\
                            & = & ((bc)_*(x) \cdot a_*(c_*(y) \cdot b_*(z) \cdot f(b,c)) \cdot f(a,bc),\; abc) \\
                            & = & ((bc)_*(x) \cdot (ac)_*(y) \cdot (ab)_*(z) \cdot a_*(f(b,c)) \cdot f(a,bc),\; abc).
      \end{eqnarray*}
      Since \(\cL(abc)\) is commutative and \((bc)_* = c_* \circ b_*\), \((ac)_* = c_*
      \circ a_*\), the two expressions agree if and only if
      \[ c_*(f(a,b)) \cdot f(ab,c) \;=\; a_*(f(b,c)) \cdot f(a,bc). \]
      This, however, is precisely the cocycle condition for \(f \in \sZ^1(M, \cL)\).
      Hence, \({}_fM\) is a commutative monoid.

      We define the map
      \[ \pi_f \colon {}_f M \to M \,\qtext{given by}\, \pi_f(x, a) \;=\; a. \]
      This is clearly a surjective monoid homomorphism.

      Next, we need to define an \(\cL\)-action on the fibres of \(\pi_f\). We do so by
      the rule \(z \pt (x, m) = (z \cdot x, m)\). As
      \[
        z \pt (z' \pt (x, m)) \;=\; z \pt (z' \cdot x, m) \;=\; (z \cdot z' \cdot x, m)
        \;=\; (z \cdot z') \pt (x, m),
      \]
      holds for all \(z, z' \in \cL(m)\), and \(e_m \pt (x, m) = (e_m \cdot x, m) = (x,
      m)\), we see that this is indeed an action.

      Lastly, we need to see that the precartesian condition is satisfied. Let \(m \in
      M\). Then \((e_m, m)\) is a basepoint of \(\pi^{-1}_f(m)\). Indeed, for any \((y,
      m)\), \(y\) is the unique element for which \(y \pt (e_m, m) = (y, m)\) holds.
      Thus, we have constructed a precartesian coextension
      \[
        {}_f\cE \;=\; (0 \to \cL \to {}_fM \xto{\pi_f} M \to 0) \;\in\; \Pcoex(M, \cL).
        \qedhere
      \]
    \end{proof}

    Given \(f, f'\in \sZ^1(M,\cL)\) we have just constructed their associated monoids
    \(M_f\) and \(M_{f'}\). The next question to explore is, what a map \(g: f'\to f\)
    corresponds to in terms of its associated monoids.

    \begin{Le} \label{le:associated_mon_hom}
      A map \(g: f'\to f\) induces a monoids homomorphism
      \[
        \gamma(g)\colon {}_{f'}M \to {}_fM, \qqtext{given by} (x,a)\mapsto (x\cdot
        g(a),a),
      \]
      compatible with the projections to \(M\) and with the \(\cL\)-action on the
      fibres.
    \end{Le}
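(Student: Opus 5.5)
The plan is to verify directly that \(\gamma(g)\) is a monoid homomorphism, using Condition~\eqref{eq:morphism_condition} with \(g\colon f'\to f\). Well-definedness is immediate: \(g(a)\in\cL(a)\), so \(x\cdot g(a)\in\cL(a)\) and \((x\cdot g(a),a)\in{}_fM\).

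For multiplicativity, take \((x,a),(y,b)\in{}_{f'}M\). On the one hand,
\[
  \gamma(g)\bigl((x,a)(y,b)\bigr) \;=\; \bigl(b_*(x)\cdot a_*(y)\cdot f'(a,b)\cdot g(ab),\; ab\bigr).
\]
On the other hand, using that \(a_*,b_*\) are monoid homomorphisms and that \(\cL(ab)\) is commutative,
\[
  \gamma(g)(x,a)\cdot\gamma(g)(y,b) \;=\; \bigl(b_*(x)\cdot a_*(y)\cdot b_*(g(a))\cdot a_*(g(b))\cdot f(a,b),\; ab\bigr).
\]
Cancelling the common factor \(b_*(x)\cdot a_*(y)\) is not allowed in a monoid. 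It is also unnecessary: Condition~\eqref{eq:morphism_condition} gives \(f'(a,b)\cdot g(ab)=b_*(g(a))\cdot a_*(g(b))\cdot f(a,b)\), so we simply multiply both sides of this equality by \(b_*(x)\cdot a_*(y)\). This is the only real step, and the main thing to watch is the orientation of the morphism condition. Note that \(f'\) must sit on the side with \(g(ab)\), which matches the direction \(g\colon f'\to f\) in the statement. For the unit, \(\gamma(g)(e_{1_M},1_M)=(g(1_M),1_M)=(e_{1_M},1_M)\), using the normalisation \(g(1_M)=e_{1_M}\) from Definition~\ref{def:classifying_cat}.

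Compatibility with the projections is clear, since \(\pi_f(\gamma(g)(x,a))=a=\pi_{f'}(x,a)\). For the action, with \(z\in\cL(a)\) we have
\[
  \gamma(g)\bigl(z\pt(x,a)\bigr) \;=\; (z\cdot x\cdot g(a),\,a) \;=\; z\pt\gamma(g)(x,a),
\]
by associativity in \(\cL(a)\). So \((\id_\cL,\gamma(g),\id_M)\) satisfies the equivariance and commuting-square conditions of Definition~\ref{def:morphism_precoext}.

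Finally, whether \(\gamma(g)\) also preserves basepoints depends on \(g\), and the statement does not assert it. By Lemma~\ref{le:cartesian_criterion}, the basepoint \((e_a,a)\) is sent to \((g(a),a)\), which is a basepoint exactly when \(g(a)\in\cL(a)^\times\). By Lemma~\ref{le:g_invertible}, this holds precisely when \(g\) is an isomorphism in \(\fC(M,\cL)\).
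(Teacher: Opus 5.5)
Your proposal is correct and matches the paper's proof: you expand both products, close the gap with Condition~\eqref{eq:morphism_condition} in the orientation forced by \(g\colon f'\to f\), and then make the immediate checks on the unit, the projections and the action. Where the paper says ``for them to agree, we need\ldots'', you multiply the morphism condition by \(b_*(x)\cdot a_*(y)\) instead of cancelling, which is slightly more careful; your closing remark on basepoints is the content of Lemma~\ref{le:iso_determined_by_val} and is not needed for this statement.
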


    \begin{proof}
      We have
      \begin{eqnarray*}
        \gamma(g)(x,a) \cdot \gamma(g)(y,b) & = & (x \cdot g(a),\, a) \cdot (y \cdot g(b),\, b)                  \\
                                            & = & (b_*(x \cdot g(a)) \cdot a_*(y \cdot g(b)) \cdot f(a,b),\; ab) \\
                                            & = & (b_*(x) \cdot b_*(g(a)) \cdot a_*(y) \cdot a_*(g(b)) \cdot f(a,b),\; ab),
      \end{eqnarray*}
      and on the other hand,
      \begin{eqnarray*}
        \gamma(g)((x,a)(y,b)) & = & \gamma(g)(b_*(x) \cdot a_*(y) \cdot f'(a,b),\; ab) \\
                              & = & (b_*(x) \cdot a_*(y) \cdot f'(a,b) \cdot g(ab),\; ab).
      \end{eqnarray*}
      Thus, for them to agree, we need
      \[ b_*(g(a)) \cdot a_*(g(b)) \cdot f(a,b) \;=\; f'(a,b) \cdot g(ab) \]
      to hold. But this is exactly Condition~\eqref{eq:morphism_condition}. That it
      respects the unit is clear. Commutativity with \(\pi_f, \pi_{f'}\) and with the
      action are immediate from the formula for \(\gamma(g)\), since \(\pi_f(\gamma(g)
      (x,a)) = a = \pi_{f'}(x,a)\) and \(\gamma(g)(z\pt(x,a)) = (z\cdot x\cdot g(a),a)
      = z\pt\gamma(g)(x,a)\) for \(z\in\cL(a)\).
    \end{proof}

    \begin{Le} \label{le:iso_determined_by_val}
      The tripple \((\id_\cL,\gamma(g),\id_M)\) is precartesian if and only if \(g\) is
      an isomorphism in \(\fC(M, cL)\).
    \end{Le}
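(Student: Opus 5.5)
The plan is to reduce the statement to the one condition in Definition~\ref{def:morphism_precoext} that Lemma~\ref{le:associated_mon_hom} does not already give us, namely that \(\gamma(g)\) sends basepoints to basepoints. Lemma~\ref{le:associated_mon_hom} shows that \(\gamma(g)\) is a monoid homomorphism commuting with the projections to \(M\). It also shows that \(\gamma(g)(z\pt(x,a)) = z\pt\gamma(g)(x,a)\), which is the action condition with \(\lambda=\id_\cL\). So the triple \((\id_\cL,\gamma(g),\id_M)\) is precartesian if and only if, for every \(a\in M\) and every basepoint \(n_0\) of \(\pi_{f'}^{-1}(a)\), the element \(\gamma(g)(n_0)\) is a basepoint of \(\pi_f^{-1}(a)\).

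Next I would identify the basepoints explicitly using Lemma~\ref{le:cartesian_criterion}. In both \({}_{f'}M\) and \({}_fM\) the canonical cleavage is \(a\mapsto(e_a,a)\); the proof of Lemma~\ref{le:cocycle_extension} shows these are basepoints. Hence the basepoints over \(a\) are exactly the elements \((u,a)\) with \(u\in\cL(a)^\times\).

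Both implications then follow from \(\gamma(g)(u,a)=(u\cdot g(a),a)\). If \(g\) is an isomorphism in \(\fC(M,\cL)\), then by Lemma~\ref{le:g_invertible} every \(g(a)\) is invertible. So \(u\cdot g(a)\) is invertible whenever \(u\) is, and \(\gamma(g)\) preserves basepoints. Conversely, if \(\gamma(g)\) preserves basepoints, apply it to the basepoint \((e_a,a)\): its image \((g(a),a)\) must be a basepoint, so \(g(a)\in\cL(a)^\times\) for every \(a\), and Lemma~\ref{le:g_invertible} makes \(g\) an isomorphism.

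I do not expect a real obstacle. The only step needing care is using Lemma~\ref{le:cartesian_criterion} for the twisted monoids \({}_fM\) rather than only for \(\cL\rtimes M\). That lemma is stated for \({}_fM\) and needs only a cleavage, which \((e_a,a)\) supplies. If one prefers not to rely on it, Lemma~\ref{le:basepoints_invertibly_related} gives the same description directly from the basepoint \((e_a,a)\). As a sanity check, the condition \(g(1_M)=e_{1_M}\) ensures \(\gamma(g)\) fixes the canonical basepoint \(1\) of Convention~\ref{conv:canonical_basepoint}, consistent with the above.
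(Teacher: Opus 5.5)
Your proposal is correct and follows the same route as the paper: reduce to basepoint preservation, describe the basepoints of \({}_fM\) as the pairs \((u,a)\) with \(u\in\cL(a)^\times\) via Lemma~\ref{le:cartesian_criterion}, and compare with \(\gamma(g)(e_a,a)=(g(a),a)\) using Lemma~\ref{le:g_invertible}. You are slightly more careful than the paper: you check every basepoint \((u,a)\), not only \((e_a,a)\), and you keep the direction \({}_{f'}M\to{}_fM\) straight, whereas the paper's wording swaps the roles of \(f\) and \(f'\) (harmlessly, since both have the same basepoints).
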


    \begin{proof}
      Recall that \(g\) is an isomorphism if and only if \(g(a) \in \cL(a)^\times\) for
      every \(a \in M\). Hence, all we have to do is check that this is precisely the
      condition under which \(\gamma(g)\) respects basepoints.

      By definition, \(\gamma(g)(x,a) = (x \cdot g(a), a)\). Thus \(\gamma(g)(e_a,a) =
      (g(a), a)\), with \((e_a, a)\) a basepoint of \({}_fM\). The description of
      basepoints of \({}_{f'}M\) in Lemma~\ref{le:cartesian_criterion} implies that
      this point is a basepoint in \({}_{f'}M\) if and only if \(g(a) \in
      \cL(a)^\times\).
    \end{proof}

    For a category \(\fC\), denote by \(\fC^\times\) its core, that is to say, the full
    subcategory of \(\fC\) whose morphisms are its isomorphisms. In particular, denote
    by \(\fC(M,\cL)^\times\) the core of \(\fC(M,\cL)\). Another way we can describe
    this is to say that its objects are \(\sZ^1(M,\cL)\) and morphisms are \(g \in
    \sC^0(M, \cL)\) with \(g(a) \in \cL(a)^\times\) for all \(a\). We have the
    following theorem, which is one of the main theorems of this paper:

    \begin{Th} \label{thm:classification_pcoex}
      There is an equivalence of categories
      \[ \fC(M,\cL)^{\times} \;\simeq\; \Pcoex(M, \cL), \]
      given on objects by \(f \mapsto {}_fM\) and on morphisms by \(g \mapsto
      \gamma(g)\). In particular, the following hold:
      \begin{enumerate}
        \item Let \(\sim\) be an equivalence relation on \(\sZ^1(M,\cL)\),
              where \(f \sim f'\) exactly when there is an \emph{isomorphism} \(f \to
              f'\) in \(\fC(M,\cL)\). We have
              \[ \pi_0(\Pcoex(M,\cL)) \simeq \sZ^1(M,\cL)/\sim, \]
        \item \(\End_{\fC(M,\cL)}(e) \simeq \sD^0(M,\cL)\), where \(e\in\sZ^1(M,\cL)\)
              is the unit cocycle and in particular,
              \[
                \pi_1(\Pcoex(M,\cL) \simeq \sD^0(M,\cL)^\times.
              \]
        \item When all \(\cL(m)\) are abelian groups, this recovers the equivalence of
              symmetric categorical groups from~\cite[Theorem~4.4]{p15}.
      \end{enumerate}
    \end{Th}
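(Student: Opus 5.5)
The plan is to build a functor \(\Phi\colon \fC(M,\cL)^\times \to \Pcoex(M,\cL)\), \(f\mapsto {}_fM\), \(g\mapsto \gamma(g)\), and prove that it is essentially surjective and fully faithful.

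\emph{Well-definedness and functoriality.} Lemma~\ref{le:cocycle_extension} gives \({}_fM\in\Pcoex(M,\cL)\). Lemmas~\ref{le:associated_mon_hom} and~\ref{le:iso_determined_by_val} show that for an invertible \(g\), \((\id_\cL,\gamma(g),\id_M)\) is a precartesian morphism. Since composition in \(\fC\) is pointwise multiplication, we have \(\gamma(g\cdot g')(x,a)=(x\cdot g(a)\cdot g'(a),a)=\gamma(g')\gamma(g)(x,a)\). Together with \(\gamma(e)=\id\), this gives functoriality; I will fix the order convention so the variance matches the paper's composition rule.

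\emph{Faithfulness.} \(\gamma(g)(e_a,a)=(g(a),a)\), so \(g\) is recovered from \(\gamma(g)\).

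\emph{Essential surjectivity.} Given \(\cE=(N\xto{\pi}M)\), choose a cleavage \(\kappa\) with \(\kappa(1)=1_N\) (Convention~\ref{conv:canonical_basepoint}). Define \(f(a,b):=\xi(\kappa(a)\kappa(b))\in\cL(ab)\), which is the element \(c\) from the proof of Lemma~\ref{le:pushforward_welldef}. Symmetry of \(f\) follows from commutativity of \(N\), and \(f(1,a)=e_a\) holds because \(\kappa(1)=1_N\). The cocycle identity comes from expanding \(\kappa(a)\kappa(b)\kappa(c)\) in both bracketings using the compatibility condition and the uniqueness at the basepoint \(\kappa(abc)\), exactly as in the derivation of \eqref{eq:xi_product}. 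Then set \(h\colon {}_fM\to N\), \((x,a)\mapsto x\pt\kappa(a)\). By compatibility,
\[h(x,a)h(y,b)=(b_*(x)\cdot a_*(y))\pt(\kappa(a)\kappa(b))=(b_*(x)\cdot a_*(y)\cdot f(a,b))\pt\kappa(ab),\]
so \(h\) is a homomorphism. It is equivariant, and it sends \((e_a,a)\mapsto\kappa(a)\), so by Lemma~\ref{le:cartesian_criterion} it preserves all basepoints \((u,a)\) with \(u\) invertible. Lemma~\ref{lem:short5_precart} then makes \(h\) an isomorphism.

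\emph{Fullness.} This is the step I expect to be the main obstacle, mostly bookkeeping without inverses. Given a morphism \(h\colon {}_{f'}M\to{}_fM\) in \(\Pcoex(M,\cL)\), \(h\) preserves \(\pi\), so \(h(e_a,a)=(g(a),a)\) for some \(g(a)\in\cL(a)\), and \(g(a)\) is invertible since basepoints go to basepoints (Lemma~\ref{le:cartesian_criterion}). Equivariance gives \(h(x,a)=x\pt h(e_a,a)=(x\cdot g(a),a)\), so \(h=\gamma(g)\). Since \(h\) is a homomorphism, comparing \(h((e_a,a)(e_b,b))\) with \(h(e_a,a)h(e_b,b)\) yields \eqref{eq:morphism_condition} directly, with no cancellation needed, and \(h(1)=1\) gives \(g(1)=e_1\). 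Hence \(\Phi\) is an equivalence.

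\emph{Consequences.} (1) is immediate from the equivalence. For (2), a morphism \(e\to e\) satisfies \(g(ab)=b_*(g(a))\cdot a_*(g(b))\), which is the derivation condition defining \(\sD^0\) (reducedness corresponds to \(g(1)=e_1\)). Restricting to invertible values computes the automorphisms of \({}_eM=\cL\rtimes M\), the unit, which gives \(\pi_1\). For (3), when all \(\cL(m)\) are groups, every \(g\) is invertible and \(\sZ^1\) becomes the Grillet cocycles. By Proposition~\ref{prop:generalises_group_coext} the two categories coincide with those of \cite{p15}, and the functor is the same one as in \cite[Theorem~4.4]{p15}.
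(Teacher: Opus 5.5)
Your proposal is correct and follows essentially the same route as the paper. You use the same functor $f\mapsto {}_fM$, $g\mapsto\gamma(g)$, prove full faithfulness by evaluating at the basepoints $(e_a,a)$ and using equivariance, and get essential surjectivity from the cleavage cocycle $\kappa(a)\kappa(b)=f(a,b)\pt\kappa(ab)$; the only cosmetic differences are that you write the comparison isomorphism as ${}_fM\to N$, $(x,a)\mapsto x\pt\kappa(a)$ (the inverse of the paper's $n\mapsto(\xi(n),\pi(n))$, which makes the homomorphism property immediate from compatibility) and that you carry out the cocycle verification the paper leaves to the reader.
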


    \begin{proof}
      By Lemma~\ref{le:associated_mon_hom}, a morphism \(g \colon f' \to f\) of
      \(\fC(M,\cL)\) induces a well-defined monoid homomorphism \(\gamma(g)\colon
      {}_{f'}M \to {}_fM\), which is compatible with the projections and the action.
      Lemma~\ref{le:iso_determined_by_val} says that \(\gamma(g)\) is precartesian
      exactly when \(g\) is an isomorphism of \(\fC(M,\cL)\). Hence \(\gamma\)
      restricts to a functor \(\fC(M,\cL)^\times \to \Pcoex(M,\cL)\). We show that this
      restriction is full and faithful and essentially surjective.

      Full and faithful: Let \(\zeta\colon {}_{f'}M \to {}_fM\) be any morphism of
      \(\Pcoex(M,\cL)\). The commutativity of the appropriate diagram forces
      \(\pi_f(\zeta(x,a)) = a\), and thus \(\zeta(x,a) = (h(x,a), a)\) for some
      \(h(x,a) \in \cL(a)\). The compatibility of the action of \(\cL\) on the fibres
      means that \(\zeta(z \pt (x,a)) = z \pt \zeta(x,a)\). This gives us \(h(z \cdot
      x, a) = z \cdot h(x,a)\) and so \(h(x,a) = x \cdot h(e_a, a)\). Setting \(g(a) :=
      h(e_a, a) \in \cL(a)\), gives us \(\gamma(g) = \zeta\), with
      Condition~\eqref{eq:morphism_condition} coming from the fact that \(\zeta\) is in
      particular a monoid homomorphism. As \(\zeta\) sends basepoints to basepoints (it
      is a morphism of \(\Pcoex(M,\cL)\)), Lemma~\ref{le:iso_determined_by_val} forces
      \(g(a) \in \cL(a)^\times\) for every \(a\), so \(g\) is the (unique) morphism of
      \(\fC(M,\cL)^\times\) with \(\gamma(g) = \zeta\). This proves that our functor is
      full and faithful.

      Essential surjectivity: We take \(0 \to \cL \to N \xto{\pi} M \to 0 \in
      \Pcoex(M,\cL)\) and let \(\kappa\) be a cleavage. For all \(a, b \in M\),
      \(\kappa(ab)\) and \(\kappa(a)\kappa(b)\) lie in the same fibre \(\pi^{-1}(ab)\)
      and thus, by the definition of the cleavage, there is a unique \(f(a,b) \in
      \cL(ab)\) with
      \[ \kappa(a)\kappa(b) \;=\; f(a,b) \pt \kappa(ab). \]
      We claim that this \(f\) is our desired cocycle, meaning \(f\in \sZ^1(M, \cL)\)
      with \(N \cong {}_fM\) in \(\Pcoex(M,\cL)\). This is easily verified and we leave
      it to the reader. We just mention that the commutativity of \(N\) gives the
      symmetry of \(f\), the associativity of \(N\) gives the cocycle condition, and
      the map \(\varphi \colon N \to {}_fM\) given by \(\varphi(n) = (\xi(n),
      \pi(n))\), where \(\xi(n) \in \cL(\pi(n))\) is the unique element with \(\xi(n)
      \pt \kappa(\pi(n)) = n\), is a bijective homomorphism. Moreover
      \(\varphi(\kappa(m)) = (e_m,m)\), which is precisely the basepoint of \({}_fM\)
      over \(m\). Hence, \(\varphi\) sends basepoints to basepoints, and being
      bijective, so does its inverse. It follows that \(\varphi\) is an isomorphism of
      \(\Pcoex(M,\cL)\), proving \(N \cong {}_fM\) as precartesian coextensions,
      proving our desired equivalence of categories.

      \medskip

      Claims~(1) and~(3) are direct consequences of the equivalence constructed.

      To see Claim~(2), let \(\cL \rtimes M = {_e\cE}\), where \(e \in \sZ^1(M,\cL)\)
      is the unit section defined by \(e(a,b) = e_{ab}\) for all \(a, b \in M\). Being
      an equivalence of categories, an endomorphism of \(\cL \rtimes M\) in
      \(\Pcoex(M,\cL)\) naturally corresponds to a unique endomorphism of \(e\) in
      \(\fC(M,\cL)\), meaning \(g \colon e \to e\). Substituting \(f = f' = e\) into
      Condition~\eqref{eq:morphism_condition} gives
      \[ e(a,b) \cdot g(ab) \;=\; b_*(g(a)) \cdot a_*(g(b)) \cdot e(a,b) \]
      and thus
      \[ g(ab) \;=\; b_*(g(a)) \cdot a_*(g(b)), \]
      as \(e(a,b) = e_{ab}\), the unit. This is exactly the condition defining
      \(\sD^0(M,\cL)\). Lemma~\ref{le:g_invertible} gives our desired result for
      automorphisms.
    \end{proof}

    As there is an equivalence of categories, all structures are transpherable. In
    particular, the Baer sum should translate in a natural way to the classifying
    category. We claim that it is nothing but the monoidal
    (Proposition~\ref{prop:cocycle_coboundary_submonoids}) product.

    \begin{Pro} \label{prop:baer_cocycle_affine}
      Let \(f,f'\in\sZ^1(M,\cL)\). There is a natural isomorphism of precartesian
      coextensions
      \[ {}_fM + {}_{f'}M \;\cong\; {}_{f\cdot f'}M, \]
      where \(f\cdot f'\) denotes the pointwise product in the commutative monoid
      \(\sZ^1(M,\cL)\).
    \end{Pro}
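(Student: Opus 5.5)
The plan is to unwind the Baer sum ${}_fM + {}_{f'}M = \nabla_*\Delta^*({}_fM\times{}_{f'}M)$ completely, using the explicit models from Constructions of pullback and pushforward with the canonical cleavages $\kappa(a)=(e_a,a)$, $\kappa'(a)=(e_a,a)$. Then I will write down a map to ${}_{f\cdot f'}M$ by hand. Rather than a direct isomorphism, I will produce a morphism in $\Pcoex(M,\cL)$; by Corollary~\ref{cor:Pcoex_groupoid} it is then automatically an isomorphism.

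First, the pullback. $\Delta^*({}_fM\times{}_{f'}M)$ is the set of triples $((x,a),(x',a))$ with $x,x'\in\cL(a)$, which I identify with pairs over $a$. Its multiplication is the componentwise one in ${}_fM$ and ${}_{f'}M$, and its basepoint over $a$ is $((e_a,a),(e_a,a))$. The associated $\xi$ sends such an element to $(x,x')\in(\cL\times\cL)(a)$.

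Second, the pushforward along $\nabla$. An element of the Baer sum is a class $[y,((x,a),(x',a))]$ with $y\in\cL(a)$, and two pairs are equivalent iff $y\cdot\nabla(\xi)=y\cdot x\cdot x'$ agree. This suggests defining
\[
\Theta\colon {}_fM+{}_{f'}M\to {}_{f\cdot f'}M,\qquad [y,((x,a),(x',a))]\mapsto (y\cdot x\cdot x',\,a).
\]
It is well defined and injective precisely by the definition of $\sim$. Surjectivity is clear: take $x=x'=e_a$.

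Third, I check that $\Theta$ is a monoid homomorphism. In the Baer sum the product of classes over $a,b$ has first coordinate $b_*(y)\,a_*(z)$. The underlying element of the box product has components $(b_*x\cdot a_*u\cdot f(a,b),ab)$ and $(b_*x'\cdot a_*u'\cdot f'(a,b),ab)$. Multiplying everything together and using that $a_*,b_*$ are monoid homomorphisms and that $\cL(ab)$ is commutative, the image is
\[
\bigl(b_*(yxx')\cdot a_*(zuu')\cdot (f\cdot f')(a,b),\,ab\bigr),
\]
which is exactly the product in ${}_{f\cdot f'}M$. The remaining checks are the unit, compatibility with $\sigma$ and $\pi_{f\cdot f'}$, and the action, since $u\pt[y,\dots]=[uy,\dots]$ maps to $(uyxx',a)$. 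Basepoints go to basepoints because $\Theta(\kappa''(a))=(e_a,a)$, and Lemma~\ref{le:cartesian_criterion} covers any other choice of basepoint. So $(\id_\cL,\Theta,\id_M)$ is a morphism, hence an isomorphism.

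The only genuine obstacle is bookkeeping. I have to make sure that the Baer sum's cleavage and $\xi$, which are defined via the chosen cleavages, are compatible with the description above, and that naturality in $f,f'$ holds. For naturality, take isomorphisms $g\colon f\to f_1$ and $g'\colon f'\to f'_1$ in $\fC(M,\cL)^\times$. Then $\gamma(g)+\gamma(g')$ acts on representatives by multiplying $x$ by $g(a)$ and $x'$ by $g'(a)$, and $\Theta$ turns this into multiplication by $(g\cdot g')(a)$, i.e.\ $\gamma(g\cdot g')$. So the square commutes on the nose, with no coherence issues.
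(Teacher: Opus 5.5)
Your proposal is correct and follows essentially the same route as the paper. Both arguments unwind $\nabla_*\Delta^*({}_fM\times{}_{f'}M)$ as classes $[u,(x,x',m)]$ with invariant $u\cdot x\cdot x'$ and define $\Theta[u,(x,x',m)]=(u\cdot x\cdot x',m)$. They then check that $\Theta$ is a morphism of precartesian coextensions, hence an isomorphism by Corollary~\ref{cor:Pcoex_groupoid}, and obtain naturality by identifying the induced map with $\gamma(g\cdot g')$. Your direct bijectivity check and your use of Lemma~\ref{le:cartesian_criterion} to handle arbitrary basepoints are small refinements of the paper's argument, not a different method.
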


    \begin{proof}
      Since \(\Pcoex(M, \cL)\) is a groupoid, all we have to do is show the existence
      of any morphism
      \[ \Theta\colon {}_fM + {}_{f'}M \longrightarrow {}_{f\cdot f'}M. \]
      It will by default be an isomorphism. The morphism is fairly automatic once we
      decupher what the elements are, excplicitely. We start with the box-product, as
      the Baer sum is based on it. By Definition~\ref{def:box_product_precart}, the
      underlying monoid of \({}_fM\boxtimes{}_{f'}M = \Delta^*({}_fM\times{}_{f'}M)\)
      consists of pairs of elements of laying over the same element of \(M\). This can
      be expressed as triples \((x,x',m)\) with \(m\in M\) and \(x,x'\in\cL(m)\). The
      product of \({}_fM\boxtimes{}_{f'}M\) is given by
      \[
        (x,x',m)\cdot(y,y',m') \;=\; \bigl(b_*(x)\cdot a_*(y)\cdot f(m,m'),
        b_*(x')\cdot a_*(y')\cdot f'(m,m'), mm'\bigr).
      \]
      The action of \((z,z')\in\cL(a)\times\cL(a)\) is given by
      \[ (z,z')\pt(x,x',m) \;=\; (z\cdot x,\, z'\cdot x', m), \]
      and the basepoint over \(m\) is \((e_m,e_m,m)\). As such, the unique element
      \(\xi((x,x',m))\) of \((\cL\times\cL)(m)\) carrying the basepoint to \((x,x',m)\)
      is \((x,x')\) itself. Passing to \(\nabla_*\), the invariant governing the
      equivalence relation of Construction~\ref{const:pushforward} becomes
      \[
        u\cdot\nabla_m\bigl(\xi(x,x',m)\bigr) \;=\; u\cdot x\cdot x' \;\in\; \cL(m).
      \]
      By definition, two classes \([u,(x,x',m)]\) and \([v,(y,y',m')]\) of
      \({}_fM+{}_{f'}M\) agree if and only if \(m = m'\), and these invariants
      coincide. We are now in a position to define our desired morphism as
      \[
        \Theta\bigl[u,(x,x',m)\bigr] \;:=\; \bigl(u\cdot x\cdot x',m\bigr),
      \]
      which is clearly well-defined. It also clearly respects the action, as
      \(w\pt[u,(x,x',m)] = [w\cdot u,(x,x',m)]\) is sent to \((w\cdot u\cdot x\cdot
      x',m) = w\pt\Theta[u,(x,x',m)]\) for every \(w\in\cL(m)\), and it sends the
      basepoint \([e_m,(e_m,e_m,m)]\) of \({}_fM+{}_{f'}M\) provided by
      Lemma~\ref{le:pushforward_welldef} to \((e_m,m)\), which is the basepoint of
      \({}_{f\cdot f'}M\) exhibited in Lemma~\ref{le:cocycle_extension}.

      All that remains is to see that \(\Theta\) is a monoid homomorphism. Unwinding
      the product of the pushforward and then that of the box product, we obtain
      \[
        [u,(x,x',m)]\cdot[v,(y,y',m')] \;=\; \bigl[m'_*(u)\cdot m_*(v),\;
        (x,x',m)\cdot(y,y',m')\bigr].
      \]
      Under \(\Theta\), its image is
      \[
        m'_*(u)\cdot m_*(v)\cdot m'_*(x)\cdot m_*(y)\cdot f(m,m')\cdot m'_*(x')\cdot
        m_*(y')\cdot f'(m,m') \;=\; m'_*(u\cdot x\cdot x')\cdot m_*(v\cdot y\cdot
        y')\cdot (f\cdot f')(m,m').
      \]
      But this is exactley the first coordinate of
      \[
        \bigl(u\cdot x\cdot x',m\bigr)\cdot\bigl(v\cdot y\cdot y',m'\bigr) \;=\;
        \Theta[u,(x,x',m)]\cdot\Theta[v,(y,y'm')]
      \]
      computed in \({}_{f\cdot f'}M\), so the two agree. The unit is respected as well,
      since \([e_{1_M},(e_{1_M},e_{1_M},1_M)]\) is sent to \((e_{1_M},1_M)\). Hence
      \(\Theta\) is a morphism, and therefore an isomorphism, of precartesian
      coextensions.

      To see that this is natural in \(f\) and \(f'\), let \(g\colon f_1\to f\) and
      \(g'\colon f_1'\to f'\) be isomorphisms in \(\fC(M,\cL)\). The Baer sum of the
      induced morphisms sends \([u,(x,x';a)]\) to \([u,(x\cdot g(m),\, x'\cdot
      g'(m),m)]\), and applying \(\Theta\) to this gives
      \[
        \bigl(u\cdot x\cdot g(m)\cdot x'\cdot g'(m),\; a\bigr) \;=\; \bigl(u\cdot
        x\cdot x'\cdot (g\cdot g')(m),m\bigr) \;=\; \gamma(g\cdot
        g')\bigl(\Theta[u,(x,x'm)]\bigr).
      \]
      As \(g\cdot g'\) is the composite of \(g\) and \(g'\) under the pointwise product
      of \(\sC^0(M,\cL)\), which is exactly the morphism \(f_1\cdot f_1'\to f\cdot f'\)
      corresponding to the pair \((g,g')\), the isomorphism \(\Theta\) is natural in
      both arguments.
    \end{proof}

    \begin{Cor} \label{cor:transport_monoidal_affine}
      We have an equivalence of symmetric monoidal groupoids
      \[ \Gamma\colon\fC(M,\cL)^\times\to\Pcoex(M,\cL) \]
      mapping the pointwise product to the Baer sum.
    \end{Cor}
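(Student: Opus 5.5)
We assemble the statement from Theorem~\ref{thm:classification_pcoex} and Proposition~\ref{prop:baer_cocycle_affine}. The first task is to say precisely what symmetric monoidal structure \(\fC(M,\cL)^\times\) carries. On objects the tensor product is the pointwise product \(f\cdot f'\) in \(\sZ^1(M,\cL)\), which lies in \(\sZ^1\) by Proposition~\ref{prop:cocycle_coboundary_submonoids}. On morphisms it is \((g,g')\mapsto g\cdot g'\). Since each \(\cL(m)\) is commutative, this data is strictly associative, strictly unital with unit \(e\), and strictly symmetric, so all associator, unitor and symmetry isomorphisms are identities.

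We must also check that \(g\cdot g'\) is a morphism \(f_1f_1'\to ff'\) whenever \(g\colon f_1\to f\) and \(g'\colon f_1'\to f'\). This follows by multiplying the two instances of Condition~\eqref{eq:morphism_condition} and regrouping, exactly as in the proof of Lemma~\ref{le:C_is_category}. Invertibility of \(g\cdot g'\) is immediate from Lemma~\ref{le:g_invertible}. Bifunctoriality is also immediate, since composition is itself pointwise multiplication in a commutative monoid. Hence \(\fC(M,\cL)^\times\) is a strict symmetric monoidal groupoid.

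Next, take \(\Gamma\) to be the equivalence of Theorem~\ref{thm:classification_pcoex}, given by \(f\mapsto{}_fM\) and \(g\mapsto\gamma(g)\). For the monoidal structure on \(\Gamma\), use the natural isomorphisms \(\Theta_{f,f'}\colon{}_fM+{}_{f'}M\to{}_{ff'}M\) of Proposition~\ref{prop:baer_cocycle_affine}, whose naturality that proposition already establishes. For the unit constraint, identify \({}_eM\) with \(\cL\rtimes M\): this is literally the same monoid, because \(e(a,b)=e_{ab}\) turns the multiplication of Lemma~\ref{le:cocycle_extension} into that of Definition~\ref{def:semidirect}.

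It remains to verify the coherence axioms of a symmetric monoidal functor: the associativity hexagon, the unit triangles and the symmetry square. The intended shortcut is that \(\Pcoex(M,\cL)\) is a groupoid (Corollary~\ref{cor:Pcoex_groupoid}) and that morphisms there are determined by explicit formulas. Each axiom asserts the equality of two morphisms of the form \(({}_fM+{}_{f'}M)+{}_{f''}M\to{}_{ff'f''}M\), and similarly for the other axioms. It suffices to evaluate both composites on a general element.

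For associativity, an element of the iterated Baer sum is represented by nested classes of the form \([u,([v,(x,x',m)],x'',m)]\). Applying \(\Theta\) twice sends it to \((u\cdot v\cdot x\cdot x'\cdot x'',m)\). The associator of Theorem~\ref{thm:monoidal_Pcoex_affine} passes through the triple box product, and computing along the other side gives the same value by commutativity of \(\cL(m)\).

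For symmetry, the swap \(\tau\) exchanges \(x\) and \(x'\), and \(\Theta\) then outputs \(u\cdot x'\cdot x=u\cdot x\cdot x'\), so the symmetry square commutes. For the unit, we check that \(\Theta_{f,e}\) composed with the identification \({}_eM=\cL\rtimes M\) agrees with the unitor \(\upsilon\) of Theorem~\ref{thm:monoidal_Pcoex_affine}. Both send \([z,((x,m),(y,m))]\) to \((z\cdot x\cdot y,m)\), so they coincide.

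The main obstacle is that the associator and unitor of Theorem~\ref{thm:monoidal_Pcoex_affine} were only sketched, as ``natural isomorphisms through the triple box product''. We must therefore first fix explicit formulas for them on representatives \([u,\dots]\) of the pushforward classes, and check that these formulas are compatible with the relation \(\sim\) of Construction~\ref{const:pushforward}. The plan is to define every such map by the single invariant ``product of all \(\cL(m)\)-coordinates'', because \(\sim\) is controlled exactly by this quantity. With that choice, every coherence diagram reduces to commutativity of \(\cL(m)\). Uniqueness of morphisms is not available here, since \(\Pcoex(M,\cL)\) has nontrivial automorphisms given by \(\sD^0(M,\cL)^\times\), which is why the explicit element-wise check is needed.
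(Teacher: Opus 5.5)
Your proposal is correct and follows the paper's route: the paper gives no separate proof of this corollary and treats it as an immediate consequence of the equivalence in Theorem~\ref{thm:classification_pcoex} together with the natural isomorphisms \(\Theta\) of Proposition~\ref{prop:baer_cocycle_affine}. Your explicit choice of associator and unitors via the invariant ``product of all \(\cL(m)\)-coordinates'', and your element-wise check of the hexagon, unit and symmetry axioms, supply coherence details the paper leaves implicit. You are also right that these checks cannot be replaced by a uniqueness-of-morphisms argument, because of the automorphisms \(\sD^0(M,\cL)^\times\).
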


\section{Versal precartesian coextensions}

\subsection{Definition and construction}

    \begin{De}
      A precartesian coextension \(0 \to \cB \to K \xto{q} M \to 0\) is called
      \emph{versal} if for every \(0 \to \cL \to N \xto{\pi} M \to 0 \in \Pcoex(M)\),
      there exists a morphism
      \[
        (\alpha, \psi, \id_M) \colon (0 \to \cB \to K \to M \to 0)
        \longrightarrow (0 \to \cL \to N \to M \to 0).
      \]
    \end{De}

    This can be constructed in the same manner as for the abelian case \cite{p15}.

    \begin{Const}[Versal precartesian coextension]
      Choose a free commutative monoid \(F\) with a surjective monoid homomorphism
      \(\pi_F \colon F \twoheadrightarrow M\). Consider the system \(\cB = \cB^{\pi_F}
      \in \cHCM(M)\) generated by symbols \(\delta_{f,g}\), where \(\pi_F(f) = \pi_F(g)
      = m \in M\), modulo the relations
      \[
        \delta_{f,f} \;=\; e_m, \qquad
        \delta_{ff_1, gg_1}
        \;=\; \pi_F(f_1)_*(\delta_{f,g}) \cdot \pi_F(f)_*(\delta_{f_1,g_1}).
      \]
      Of course, \(\pi_F(f_1) = \pi_F(g_1)\) as well. For \((a,m): m \to am \in
      \cHCM(M)\), we have \(a_* \colon \cB(m) \to \cB(am)\) given by \(\delta_{f,g}
      \mapsto \delta_{ff_0, gf_0}\), where \(f_0 \in \pi_F^{-1}(a)\) is a lift of
      \(a\).\\

      The coextension monoid \(K\) is subsequently defined to be the set of equivalence
      classes of pairs \((\omega, f)\), where \(f \in F\) and \(\omega \in
      \cB(\pi_F(f))\). We put an equivalence relation on this where \((\omega, f) \sim
      (\tau, g)\) if and only if
      \[
        \pi_F(f) \;=\; \pi_F(g) \qquad \,\text{and}\, \qquad \omega \cdot \tau^{-1}
        \;=\; \delta_{g,f}.
      \]
      The class of \((\omega, f)\) is denoted by \([\omega, f]\). The monoid structure
      of \(K\) come from declaring
      \[
        [\omega, f] \cdot [\omega_1, f_1] \;:=\; [\pi_F(f_1)_*(\omega) \cdot
        \pi_F(f)_*(\omega_1),\; ff_1].
      \]

      Finally, the projection \(q \colon K \to M\) is given by \(q[\omega, f] =
      \pi_F(f)\) with the action of \(u \in \cB(m)\) on the fibre \(q^{-1}(m)\) defined
      as \(u \pt [\omega, f] = [u \cdot \omega, f]\).
    \end{Const}

    \begin{Le}
      The above construction yields a versal precartesian coextension.
    \end{Le}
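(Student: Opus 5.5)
The plan has two halves: first show that the construction gives an object of \(\Pcoex(M,\cB)\), then build a morphism into an arbitrary precartesian coextension.

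\textbf{Well-definedness.} Because no inverses are available, I will read the relation \(\omega\cdot\tau^{-1}=\delta_{g,f}\) in its inverse-free form \(\omega=\delta_{g,f}\cdot\tau\). I then take \(\sim\) to be the equivalence relation this generates. The basic tool is the identity obtained from the defining relations of \(\cB\) with \(f_1=g\) and \(g_1=f\): since \(F\) is commutative, \(\delta_{fg,gf}=e\), so \(m_*(\delta_{f,g})\cdot m_*(\delta_{g,f})=e_{m^2}\). I will also want a transitivity rule \(\delta_{f,g}\cdot\delta_{g,h}=\delta_{f,h}\), which I would try to derive from the relations in the same way. These two facts should give the following.
\begin{enumerate}
  \item The product \([\omega,f]\cdot[\omega_1,f_1]\) is independent of representatives. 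This is a direct expansion using the multiplicativity relation for \(\delta_{ff_1,gg_1}\), in the same pattern as the proof of Lemma~\ref{le:pushforward_welldef}.
  \item Commutativity, associativity and the unit \([e_{1_M},1_F]\) are checked verbatim as in Lemma~\ref{le:pushforward_welldef}, using \((ab)_*=a_*\circ b_*\).
  \item The map \(q\) is a surjective homomorphism, since \(\pi_F\) is.
  \item The compatibility condition of Definition~\ref{def:pcoext}(ii) is the same four-line computation as for the pushforward.
\end{enumerate}

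\textbf{Precartesian condition.} Fix a set-theoretic section \(s\colon M\to F\) of \(\pi_F\) with \(s(1_M)=1_F\). I claim \([e_m,s(m)]\) is a basepoint of \(q^{-1}(m)\). Every element \([\omega,f]\) of this fibre equals \([\omega\cdot\delta_{f,s(m)},\,s(m)]=(\omega\cdot\delta_{f,s(m)})\pt[e_m,s(m)]\) (with the orientation of \(\delta\) matched to the convention fixed above), which gives existence. For uniqueness I need \([u,s(m)]=[u',s(m)]\Rightarrow u=u'\). Under the generated relation, any chain from \((u,s(m))\) back to a pair over \(s(m)\) contributes a factor \(\delta_{s(m),s(m)}=e_m\) after using the transitivity rule, which forces \(u=u'\).

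\textbf{Versality.} Let \(0\to\cL\to N\xto{\pi}M\to0\) be given, with cleavage \(\kappa\) and \(\xi\) as in Section~\ref{sec:pushforward}. Since \(F\) is free, choose a homomorphism \(\phi\colon F\to N\) with \(\pi\phi=\pi_F\) by sending each free generator \(t\) to \(\kappa(\pi_F(t))\). I then set
\[
  \psi[\omega,f] \;:=\; \alpha(\omega)\pt\phi(f),
\]
and \(\alpha\) must be chosen so that \(\alpha(\delta_{g,f})\pt\phi(f)=\phi(g)\). The defining relations of \(\cB\) are exactly the multiplicativity required of such \(\alpha\), by the compatibility condition, so \(\alpha\) extends to a morphism of systems once it is defined on generators. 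With that in place, \(\psi\) is a homomorphism that respects the action, and it sends the basepoint \([e,s(m)]\) to \(\phi(s(m))\).

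\textbf{Main obstacle.} The hard step is choosing \(\alpha(\delta_{f,g})\). In the group case of \cite{p15} one simply takes \(\xi(\phi(f))\cdot\xi(\phi(g))^{-1}\). Here \(\phi(f)\) is a product of basepoints, and such a product need not be a basepoint, so \(\phi(f)\) need not be reachable from \(\phi(g)\) by the action. I would first try to arrange that \(\phi(s(m))\) is a basepoint, by choosing \(s\) and \(\phi\) jointly, perhaps taking \(F\) free on the underlying set of \(M\) with \(s\) the canonical inclusion. I would then define \(\alpha(\delta_{f,s(m)}):=\xi'(\phi(f))\), where \(\xi'\) is computed relative to the basepoint \(\phi(s(m))\). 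For a general pair \(f,g\), I would use the transitivity rule \(\delta_{f,g}=\delta_{f,s(m)}\cdot\delta_{s(m),g}\), with the relation \(m_*(\delta_{f,g})\cdot m_*(\delta_{g,f})=e_{m^2}\) controlling the reversed factor. If even this fails without inverses, I would enlarge \(\cB\) by formally adjoining the \(\delta_{g,f}\) as mutual inverses. This keeps every step above valid and makes \(\alpha\) definable by the group-case formula on the basepoints \(\phi(s(m))\).
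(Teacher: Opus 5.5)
Your diagnosis in the last paragraph is accurate, but the obstacle cannot be engineered away. In the monoid setting the lemma is false as stated, so no choice of \(s\), \(\phi\) or \(\alpha\) will rescue the argument.

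\textbf{Counterexample.} Take \(M=F=\N=\{1,t,t^2,\dots\}\) with \(\pi_F=\id\). Every fibre of \(\pi_F\) is a singleton, so \(\cB\) is the trivial system and \(K\cong M\), with every element a basepoint. The data of a morphism \(K\to N\) is then exactly a cleavage monoid homomorphism \(M\to N\). So versality would say that every precartesian coextension of \(\N\) splits. Now let \(\cL\) be constant with value the free monoid \(\{1,u,u^2,\dots\}\), and put \(\beta(t^i,t^j)=u\) for \(i,j\geq 1\) and \(\beta=1\) otherwise. A direct check gives \(\beta\in\sZ^1(M,\cL)\). By Lemma~\ref{le:cartesian_criterion}, the only basepoint of \({}_\beta M\) over \(t^i\) is \((1,t^i)\), whereas \((1,t)(1,t)=(u,t^2)\). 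Hence \({}_\beta M\) does not split.

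\textbf{Your repairs.} The same target defeats both of them.
\begin{itemize}
\item With \(F\) free on the set \(M\) and \(s\) the inclusion, your relation gives \([e_{ab},s(ab)]=\delta_{s(ab),s(a)s(b)}\pt\bigl([e_a,s(a)]\cdot[e_b,s(b)]\bigr)\) in \(K\). Since \(\psi\) must preserve products, the action, and your basepoints \([e,s(\cdot)]\), this forces \((1,t^2)=z\pt(1,t)^2=(zu,t^2)\) for some \(z\). That is impossible, because \(u\) is not a unit.
\item If you adjoin inverses of the \(\delta\)'s, then \([e,f]=\delta_{f,s(m)}\pt[e,s(m)]\) is a basepoint for every \(f\in F\). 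So \(f\mapsto\psi[e,f]\) is a homomorphism \(F\to N\) over \(\pi_F\) with values in basepoints, and squaring a lift of \(t\) gives the same contradiction.
\end{itemize}
The group case escapes this only because there every element of a fibre is a basepoint. This is exactly what the paper's appeal to the group-case proof overlooks.

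\textbf{Second gap: the precartesian step.} Independently of versality, your uniqueness argument fails. The rule \(\delta_{f,g}\cdot\delta_{g,h}=\delta_{f,h}\) does not follow from the defining relations. The trick you used for \(\delta_{fg,gf}\) only yields \(m_*(\delta_{f,g}\cdot\delta_{g,h})=m_*(\delta_{f,h})\) in \(\cB(m^2)\). Nothing forces equality in \(\cB(m)\) when \(m_*\) is not injective. For example, take \(M=\{1,x,0\}\) with \(x^2=0\), \(F=\N\) and \(t\mapsto x\). The generators \(\delta_{t^2,t^3}\) and \(\delta_{t^3,t^2}\) enter the nontrivial relations only through \(x_*\) and \(0_*\), and one finds \(\delta_{t^2,t^3}\cdot\delta_{t^3,t^2}\neq e_0\).

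Without the transitivity rule, write \(s_m:=s(m)\) and take any \(g\in\pi_F^{-1}(m)\). The chain \((\delta_{g,s_m}\delta_{s_m,g}\cdot u,\,s_m)\sim(\delta_{s_m,g}\cdot u,\,g)\sim(u,\,s_m)\) identifies \([u,s_m]\) with \([\delta_{g,s_m}\delta_{s_m,g}\cdot u,\,s_m]\). So uniqueness in Definition~\ref{def:pcoext}(i) fails at your proposed basepoint.

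\textbf{What does hold.} Versal coextensions exist, but for a different system. Let \(\cB'\) be generated by symbols \(\chi(a,b)\in\cB'(ab)\) subject to symmetry, \(\chi(1_M,a)=e_a\) and the cocycle identity, so that \(\chi\in\sZ^1(M,\cB')\). Given any \(N\), write \(N\cong{}_\beta M\) with \(\beta\in\sZ^1(M,\cL)\) (Theorem~\ref{thm:classification_pcoex}). Then \(\alpha(\chi(a,b)):=\beta(a,b)\) and \((x,a)\mapsto(\alpha(x),a)\) give a morphism \({}_\chi M\to{}_\beta M\). It preserves basepoints because \(\alpha\) preserves units, and composing with \({}_\beta M\cong N\) gives the required morphism.
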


    The proof is exactly as in~\cite[p15]{p15} and we omit it.

\subsection{Exact sequence associated to a coextension}

    Throughout this section, let \(\cL \in \cHCM(M)\) and let \(\cE_0:= (0 \to \cB \to
    K \xto{q} M \to 0)\) denote a precartesian coextension.

    \begin{De}
      Denote by \(D(q, \cL) \subseteq \sD^0(M, \cL)\) the submonoid (see
      Lemma~\ref{le:dql_submonoid}) of all derivations \(\partial \colon K \to q^*\cL\)
      satisfying
      \begin{equation} \label{eq:D_condition}
        \partial((x \cdot y) \pt n) \cdot \partial(n)
        \;=\; \partial(x \pt n) \cdot \partial(y \pt n),
      \end{equation}
      for all \(n \in K\) and \(x, y \in \cB(q(n))\).
    \end{De}

    \begin{Le} \label{le:dql_submonoid}
      \(D(q, \cL)\) is a commutative monoid under pointwise multiplication.
    \end{Le}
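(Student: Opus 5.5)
The plan is to verify directly that \(D(q,\cL)\) contains the unit derivation and is closed under the pointwise product. It is then automatically a commutative monoid, since pointwise multiplication in the commutative monoids \(\cL(q(n))\) is associative and commutative. I will treat a derivation \(\partial\colon K\to q^*\cL\) as an assignment \(n\mapsto\partial(n)\in\cL(q(n))\) satisfying \(\partial(nn')=q(n')_*(\partial(n))\cdot q(n)_*(\partial(n'))\). The product \(\partial_1\cdot\partial_2\) is defined by \((\partial_1\cdot\partial_2)(n):=\partial_1(n)\cdot\partial_2(n)\).

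First, the unit. The trivial map \(n\mapsto e_{q(n)}\) is a derivation, as already noted before Lemma~\ref{le:split}. It satisfies \eqref{eq:D_condition} trivially, because the fibre \(q^{-1}(q(n))\) is preserved by the action (Lemma~\ref{le:pi_respects_action}). Hence every term lives in the same monoid \(\cL(q(n))\), and both sides of \eqref{eq:D_condition} equal \(e_{q(n)}\).

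Next, closure under products for \(\partial_1,\partial_2\in D(q,\cL)\). The derivation identity for \(\partial_1\cdot\partial_2\) follows exactly as in the proof of Proposition~\ref{prop:cocycle_coboundary_submonoids}. Write \(m=q(n)\) and \(m'=q(n')\). Since \(m'_*\) and \(m_*\) are monoid homomorphisms and \(\cL(mm')\) is commutative, we can regroup
\[
  (\partial_1\partial_2)(nn') \;=\; \bigl[m'_*\partial_1(n)\cdot m_*\partial_1(n')\bigr]\cdot\bigl[m'_*\partial_2(n)\cdot m_*\partial_2(n')\bigr] \;=\; m'_*\bigl((\partial_1\partial_2)(n)\bigr)\cdot m_*\bigl((\partial_1\partial_2)(n')\bigr).
\]
For \eqref{eq:D_condition}, note again that all of \(n\), \(x\pt n\), \(y\pt n\) and \((xy)\pt n\) lie in the fibre over \(m\). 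So every value lies in the single commutative monoid \(\cL(m)\), and we may multiply the two instances of \eqref{eq:D_condition} for \(\partial_1\) and \(\partial_2\) and rearrange factors to obtain it for \(\partial_1\cdot\partial_2\).

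The only point needing care is the claim that \(D(q,\cL)\) sits inside \(\sD^0\) compatibly with its monoid structure. We also need the reduced condition \(\partial(1_K)=e_{1_M}\) to be preserved, and it is, since \(e\cdot e=e\). Beyond this there is no real obstacle: the argument is entirely ``same fibre, same commutative monoid, so regroup factors''. I expect the main thing to check is that Lemma~\ref{le:pi_respects_action} really places all terms of \eqref{eq:D_condition} in one \(\cL(m)\), which it does.
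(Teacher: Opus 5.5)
Your proof is correct and follows the paper's argument. The paper also shows that \(n\mapsto e_{q(n)}\) satisfies \eqref{eq:D_condition}, and it gets closure by multiplying the two instances of \eqref{eq:D_condition} for \(\partial_1,\partial_2\) inside \(\cL(q(n))\) and regrouping. Your additional explicit check that the derivation identity and the normalisation \(\partial(1_K)=e_{1_M}\) survive pointwise products fills in what the paper leaves implicit. The paper covers this only by calling \(D(q,\cL)\) a submonoid of \(\sD^0(M,\cL)\), whereas strictly these are derivations on \(K\) with values in \(q^*\cL\).
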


    \begin{proof}
      As
      \[
        e((x \cdot y) \pt n) \cdot e(n) \;=\; e_{q(n)} \cdot e_{q(n)}
        \;=\; e(x \pt n) \cdot e(y \pt n)
      \]
      we have \(e \colon n \mapsto e_{q(n)} \in D(q, \cL)\) (unit). For \(\partial_1,
      \partial_2 \in D(q, \cL)\), we have
      \begin{align*}
        (\partial_1 \cdot \partial_2)((x \cdot y) \pt n) \cdot (\partial_1 \cdot \partial_2)(n) & = \partial_1((x \cdot y) \pt n) \cdot \partial_2((x \cdot y) \pt n) \cdot \partial_1(n) \cdot \partial_2(n) \\
                                                                                                & = \partial_1((x \cdot y) \pt n) \cdot \partial_1(n) \cdot \partial_2((x \cdot y) \pt n) \cdot \partial_2(n) \\
                                                                                                & = \partial_1(x \pt n) \cdot \partial_1(y \pt n) \cdot \partial_2(x \pt n) \cdot \partial_2(y \pt n)         \\
                                                                                                & = \partial_1(x \pt n) \cdot \partial_2(x \pt n) \cdot \partial_1(y \pt n) \cdot \partial_2(y \pt n)         \\
                                                                                                & = (\partial_1 \cdot \partial_2)(x \pt n) \cdot (\partial_1 \cdot \partial_2)(y \pt n),
      \end{align*}
      and hence, the product satisfies Condition~\eqref{eq:D_condition}, meaning \(D(q,
      \cL)\) is multiplicatively closed. Commutativity is clear (submonoid).
    \end{proof}

    \begin{Pro} \label{prop:exact_pcoex}
      We have the following maps:
      \begin{enumerate}
        \item An injective monoid homomorphism
              \[
                \eta \colon \sD^0(M, \cL) \to D(q, \cL), \qquad \eta(\delta) \;:=\;
                \delta \circ q.
              \]
        \item A map of sets
              \[
                \tau \colon \Hom_{\cHCM(M)}(\cB, \cL) \to \pi_0(\Pcoex(M, \cL)),
                \qquad \tau(\alpha) \;:=\; [\alpha_*\cE_0],
              \]
              where \(\alpha_*\cE_0\) denotes the pushforward coextension of \(\cE_0\)
              along \(\alpha\).
      \end{enumerate}
    \end{Pro}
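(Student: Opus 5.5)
The plan is to verify the two claims separately, mostly by unwinding definitions; the real content is in (1).

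For (1), take \(\delta\in\sD^0(M,\cL)\) and define \(\eta(\delta)(n):=\delta(q(n))\in\cL(q(n))=(q^*\cL)(n)\).

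\emph{Derivation.} For \(n,n'\in K\) with \(a=q(n)\), \(b=q(n')\), the \(\sD^0\) identity gives
\[\delta(q(nn'))=\delta(ab)=b_*\delta(a)\cdot a_*\delta(b).\]
Since the structure maps of \(q^*\cL\) are \(q(n')_*=b_*\) and \(q(n)_*=a_*\), this is exactly the derivation identity for \(\delta\circ q\). Reducedness follows from \(q(1_K)=1_M\).

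\emph{Condition \eqref{eq:D_condition}.} By Lemma~\ref{le:pi_respects_action}, \(q(x\pt n)=q(n)\) for all \(x\). Hence all four terms of \eqref{eq:D_condition} equal \(\delta(q(n))\), and both sides read \(\delta(q(n))^2\).

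\emph{Homomorphism.} The map \(\eta\) preserves the pointwise product and sends the unit \(a\mapsto e_a\) to \(n\mapsto e_{q(n)}\), which is the unit of \(D(q,\cL)\) found in Lemma~\ref{le:dql_submonoid}.

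\emph{Injectivity.} This is the one step that uses something: \(q\) is surjective, being a coextension. If \(\delta\circ q=\delta'\circ q\), then for each \(m\in M\) pick \(n\in q^{-1}(m)\) to get \(\delta(m)=\delta'(m)\).

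\emph{A caveat.} \(D(q,\cL)\) is declared as a subset of \(\sD^0(M,\cL)\), which is type-inconsistent, since its elements are derivations on \(K\). I read the intended meaning as the derivations \(K\to q^*\cL\), that is, \(\sD^0(K,q^*\cL)\), and verify membership there as above.

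For (2), there is essentially nothing to prove beyond well-definedness. Given \(\alpha\colon\cB\to\cL\) in \(\cHCM(M)\), Lemma~\ref{le:pushforward_welldef} shows that \(\alpha_*\cE_0\) is a precartesian coextension of \(M\) by \(\cL\), so it is an object of \(\Pcoex(M,\cL)\). Its class in \(\pi_0\) is the value \(\tau(\alpha)\).

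The construction of \(\alpha_*\cE_0\) depends on a chosen cleavage \(\kappa\) of \(\cE_0\), so one must check the class does not. Let \(\kappa_2\) be another cleavage, with \(\kappa_2(m)=u_m\pt\kappa(m)\) for some \(u_m\in\cB(m)^\times\) by Lemma~\ref{le:basepoints_invertibly_related}. Then \(\xi_2(n)=\xi(n)\cdot u_{q(n)}^{-1}\). Define the map \([y,n]_2\mapsto[y\cdot\alpha(u_{q(n)})^{-1},n]\). This respects the relation \(\sim\) and the multiplication, by a short computation using naturality of \(\alpha\). It sends basepoint \([e_m,\kappa_2(m)]_2\) to \([\alpha(u_m)^{-1},\kappa_2(m)]\), which is a unit times a basepoint. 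It is therefore a morphism in \(\Pcoex(M,\cL)\), hence an isomorphism by Corollary~\ref{cor:Pcoex_groupoid}.

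Alternatively, one can note that the relation \(\sim\) in Construction~\ref{const:pushforward} and the resulting monoid \(K\) depend on \(\kappa\) only up to such a unit rescaling. The main, if minor, obstacle is this cleavage-independence; the rest is routine.
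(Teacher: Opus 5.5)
Part (1) and the basic content of (2) match the paper's proof. On injectivity you actually do better than the paper. The paper only checks that $\eta(\delta)=e$ forces $\delta=e$, which for monoid homomorphisms does not imply injectivity; your argument ($\delta\circ q=\delta'\circ q\Rightarrow\delta=\delta'$, by surjectivity of $q$) is the correct one. Your remark that $D(q,\cL)$ really lives among derivations $K\to q^*\cL$ is also right. The paper never addresses cleavage-independence in (2): it fixes $\kappa$ once and for all in Section~\ref{sec:pushforward}, so for it $\tau$ is well defined tautologically.

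Your extra cleavage-independence step, however, rests on a false claim: your map $\Phi$ is not multiplicative.

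\emph{Why it fails.} First note that $\sim$ does not depend on $\kappa$ at all. Since $\xi_2(n)=\xi(n)\cdot u_{q(n)}^{-1}$ with $u_{q(n)}$ a unit, $y\cdot\alpha(\xi_2(n))=y'\cdot\alpha(\xi_2(n'))$ holds if and only if $y\cdot\alpha(\xi(n))=y'\cdot\alpha(\xi(n'))$. Moreover, the product, projection and action of Construction~\ref{const:pushforward} never mention $\kappa$. So your map is $\Phi(k)=\alpha(u_{\sigma(k)})^{-1}\pt k$ on a single monoid. By the compatibility condition, for $\sigma(k)=a$ and $\sigma(k')=b$,
\[
\Phi(k)\Phi(k')=\bigl(b_*(\alpha(u_a))\cdot a_*(\alpha(u_b))\bigr)^{-1}\pt kk',
\qquad
\Phi(kk')=\alpha(u_{ab})^{-1}\pt kk'.
\]
When $\cL$ consists of groups the action on fibres is free. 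Multiplicativity then forces $\alpha\circ u$ to be a derivation. Naturality of $\alpha$ does not give this, and it generally fails. For example, take $M=\{1,t\}$ with $t^2=1$, and $\cB=\cL$ the constant system on an infinite cyclic group $\langle g\rangle$. Let $\cE_0=\cL\rtimes M$ with $\kappa(m)=(e_m,m)$, take $\alpha=\id$, and set $\kappa_2(t)=(g,t)$, so $u_t=g$. Then $u_{t\cdot t}=u_1=e$, but $t_*(u_t)\cdot t_*(u_t)=g^2$.

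\emph{The fix.} It is simpler than what you attempted. The two pushforwards are literally the same coextension. The identity is a morphism of $\Pcoex(M,\cL)$ because being a basepoint is a property, not chosen data: $[e_m,\kappa_2(m)]=\alpha(u_m)\pt[e_m,\kappa(m)]$ is a unit translate of a basepoint. Trying to match the \emph{chosen} basepoints, as $\Phi$ does, is both unnecessary and exactly what breaks multiplicativity. Your closing ``alternatively'' sentence is on the right track, but it should say ``independent of $\kappa$'' rather than ``up to unit rescaling''.
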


    \begin{proof}
      (1) Let \(\delta \in \sD^0(M, \cL)\) and denote \(\partial := \delta \circ q\),
      which is a derivation of \(q^*\cL\) in the obvious way. Since by
      Lemma~\ref{le:pi_respects_action}, we have
      \[
        \partial((x \cdot y) \pt n) \cdot \partial(n)
        \;=\; \delta(q(n)) \cdot \delta(q(n))
        \;=\; \partial(x \pt n) \cdot \partial(y \pt n).
      \]
      It follows that Condition~\eqref{eq:D_condition} holds and thus, we indeed land
      in \(D(q, \cL)\). Moreover, for \(\delta, \delta' \in \sD^0(M, \cL)\) and \(n \in
      K\), we can write
      \begin{eqnarray*}
        \eta(\delta \cdot \delta')(n) & = & (\delta \cdot \delta')(q(n))           \\
                                      & = & \delta(q(n)) \cdot \delta'(q(n))       \\
                                      & = & \eta(\delta)(n) \cdot \eta(\delta')(n) \\
                                      & = & (\eta(\delta) \cdot \eta(\delta'))(n),
      \end{eqnarray*}
      and \(\eta(e)(n) = e_{q(n)} = e(n)\), making \(\eta\) a monoid homomorphism.

      For injectivity, let \(\eta(\delta) = e\), meaning \(\delta(q(n)) = e_{q(n)},\;
      \forall n \in K\). Since \(q \colon K \to M\) is surjective, every \(m \in M\) is
      of the form \(q(n)\) for some \(n \in K\). Hence \(\delta(m) = e_m\) for all \(m
      \in M\), so \(\delta = e\).

      (2) Given \(\alpha \in \Hom_{\cHCM(M)}(\cB, \cL)\), the pushforward
      \(\alpha_*\cE_0\) is a precartesian coextension of \(M\) by \(\cL\), by
      Section~\ref{sec:pushforward}. The only thing to show here is that its
      isomorphism class \([\alpha_*\cE_0] \in \pi_0(\Pcoex(M, \cL))\) depends only on
      \(\alpha\). But this is clear since if \(\alpha = \alpha'\), then \(\alpha_*\cE_0
      = \alpha'_*\cE_0\) as coextensions.
    \end{proof}

    \begin{Cor}
      The map \(\tau \colon \Hom_{\cHCM(M)}(\cB, \cL) \to \pi_0(\Pcoex(M, \cL))\) of
      Proposition~\ref{prop:exact_pcoex} is surjective.
    \end{Cor}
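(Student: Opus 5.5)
The plan is to use versality of \(\cE_0\), so I take \(\cE_0\) to be the versal precartesian coextension constructed above; for an arbitrary \(\cE_0\) surjectivity cannot be expected. Let \(\cE = (0 \to \cL \to N \xto{\pi} M \to 0)\) represent an arbitrary class in \(\pi_0(\Pcoex(M,\cL))\). Versality provides a morphism \((\alpha,\psi,\id_M)\colon \cE_0 \to \cE\) with \(\alpha \in \Hom_{\cHCM(M)}(\cB,\cL)\). I claim that \(\alpha_*\cE_0 \cong \cE\) in \(\Pcoex(M,\cL)\), so that \(\tau(\alpha) = [\cE]\). The idea is the usual universal property of a pushforward: \(\psi\) should factor through the canonical morphism \(\rho\colon K \to \alpha_*K\) of Proposition~\ref{pro:pushforward_morphism}.

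Fix a cleavage \(\kappa\) of \(q\) and let \(\xi\) be as in Construction~\ref{const:pushforward}. I define
\[
  \Phi\colon \alpha_*K \to N,\qquad \Phi[y,k] \;:=\; y \pt \psi(k).
\]
The key step is well-definedness. Put \(m = q(k)\). Since \(k = \xi(k)\pt\kappa(m)\) and \(\psi\) is equivariant along \(\alpha\), we get
\[
  y\pt\psi(k) \;=\; y\pt\bigl(\alpha_m(\xi(k))\pt\psi(\kappa(m))\bigr) \;=\; \bigl(y\cdot\alpha_m(\xi(k))\bigr)\pt\psi(\kappa(m)).
\]
So \(\Phi[y,k]\) depends only on \(q(k)\) and on the invariant \(y\cdot\alpha_m(\xi(k))\). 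These are exactly the data that define \(\sim\), so \(\Phi\) is well defined.

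Next I verify that \((\id_\cL,\Phi,\id_M)\) is a morphism in \(\Pcoex(M,\cL)\). For the homomorphism property, the product \([y,k][z,k'] = [m'_*(y)\cdot m_*(z),\,kk']\) is sent to \((m'_*(y)\cdot m_*(z))\pt\psi(k)\psi(k')\). This equals \((y\pt\psi(k))(z\pt\psi(k'))\) by the compatibility condition for \(\cE\) and multiplicativity of \(\psi\). The unit \([e_{1_M},1_K]\) goes to \(1_N\). The remaining conditions hold as follows.
\begin{itemize}
  \item Compatibility with projections is \(\pi\circ\psi = q\).
  \item Equivariance, \(\Phi(u\pt[y,k]) = (u\cdot y)\pt\psi(k) = u\pt\Phi[y,k]\), is immediate from the definition.
  \item For basepoints, the basepoint \([e_m,\kappa(m)]\) of Lemma~\ref{le:pushforward_welldef} is sent to \(\psi(\kappa(m))\). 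This is a basepoint because \(\psi\) preserves basepoints.
\end{itemize}

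Finally, the Short Five Lemma~\ref{lem:short5_precart}(2), applied with \(\lambda = \id_\cL\) and \(\varphi = \id_M\), shows that \(\Phi\) is an isomorphism; alternatively, Corollary~\ref{cor:Pcoex_groupoid} gives this directly. Hence \([\alpha_*\cE_0] = [\cE]\), and \(\tau\) is surjective. The only step that needs real care is the well-definedness of \(\Phi\), that is, recognising that the pushforward relation is exactly the kernel of \(y \mapsto y\cdot\alpha_m(\xi(k))\). Everything after that is routine bookkeeping.
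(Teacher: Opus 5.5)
Your proposal is correct and follows the paper's own route: versality gives \((\alpha,\psi,\id_M)\colon\cE_0\to\cE\), this morphism factors through \(\alpha_*\cE_0\), and the factor map is an isomorphism because \(\Pcoex(M,\cL)\) is a groupoid. The paper only asserts the factorisation and marks it with ``Why?''. You supply it explicitly: you define \(\Phi[y,k]=y\pt\psi(k)\) and observe that \(y\pt\psi(k)=(y\cdot\alpha_m(\xi(k)))\pt\psi(\kappa(m))\) depends only on the data defining \(\sim\), which fills exactly the gap the paper leaves.
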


    \begin{proof}
      Let \(0 \to \cL \to N \xto{\pi} M \to 0\) be any precartesian coextension. By
      versality of \(\cE_0\), there exists a morphism \((\alpha, \psi, \id_M) \colon
      \cE_0 \to (0 \to \cL \to N \to M \to 0)\). By definition of the pushforward, this
      morphism factors through \(\alpha_*\cE_0\), \red{giving us an isomorphism
      \(\alpha_*\cE_0 \cong (0 \to \cL \to N \to M \to 0)\) in \(\Pcoex(M, \cL)\) --
      Why?}, proving surjectivity.
    \end{proof}

    \part{Precartesian coextensions of monoid functors}\label{part:functors}

\section{The category \(\Pcoex(\cF, \cL)\) and its cohomological classification}
  \label{sec:non-affine_pcoex}

\subsection{Definitions}

\subsubsection{Systems of commutative monoids over a monoid functor}

      Recall that a sheaf over a poset topology is equivalent to a contravariant
      functor over the underlying poset. If the reader is unfamiliar with this, we
      direct him/her to \cite{p15}, which talks a little more about this, and includes
      sources for ridged proofs.

      Let \(P\) be a finite poset and \(\cF \colon P^{op} \to \CMon\) a contravariant
      functor in commutative monoids. This is also called a presheaf of monoids. In
      this paper, and when working in the theory of monoid schemes, we will, however,
      generally refer to this as a sheaf. The reason is that, as already mentioned,
      presheaves of a post are eqivalent to sheaves over their associated poset
      topologies, and it is the latter that we care about in the theory of monoid
      schemes. (In the affine case, this would be the difference between \(\Spec(M)\)
      regarded as an orderd set (poset) \(\Spec(M)\) reganded as a topological sapce
      under the Zariski topology.) We denote the stalk at \(p \in P\) by \(\cF_p\) and
      the structural map for \(q \leq p\) by \(\phi^{\cF}_{p,q}\) or simply
      \(\phi_{p,q} \colon \cF_p \to \cF_q\).

      \begin{De}
        A \emph{system of commutative monoids over \(\cF\)} consists of the following
        datum:
        \begin{itemize}
          \item A system \(\cL_p \in \cHCM(\cF_p)\) for each \(p \in P\);
          \item A morphism \(\alpha_{p,q} \colon \cL_p \to \phi_{p,q}^*\cL_q\) in
                \(\cHCM(\cF_p)\) for each \(q \leq p\).
        \end{itemize}
        These must satisfy \(\alpha_{p,p} = \id\) and the cocycle condition
        \(\alpha_{p,r} = \phi_{p,q}^*(\alpha_{q,r}) \circ \alpha_{p,q}\) for \(r \leq q
        \leq p\).

        A morphism of such systems is a collection of morphisms \(\psi_p \colon \cL_p
        \to \cL'_p\) compatible with all \(\alpha_{p,q}\)'s. The resulting category is
        denoted by \(\cHCM(\cF)\).
      \end{De}

      Henceforth, throughout this section, \(P\) denotes a poset, \(\cF: P\to \CMon\) a
      sheaf of commutative monoids and \(\cL \in \cHCM(\cF)\) is a system of
      commutative monoids over \(\cF\).

\subsubsection{Precartesian coextensions of monoid functors}

      \begin{De} \label{def:pcoex_functor}
        A \emph{precartesian coextension of \(\cF\) by \(\cL\)}, written \(0 \to \cL
        \to \cG \to \cF \to 0\), is a sheaf of commutative monoids \(\cG\) over \(P\)
        with a morphism of sheaves \(\pi \colon \cG \to \cF\) such that for each \(q
        \leq p \in P\):
        \begin{itemize}
          \item[(i)] \(0 \to \cL_p \to \cG_p \xto{\pi_p} \cF_p \to 0\)
                is a precartesian coextension, and
          \item[(ii)] the square
                \[
                  \xymatrix{
                    0 \ar[r]                                 & \cL_p \ar[r] \ar@{=>}[d]_{\alpha_{p,q}} & \cG_p
                    \ar[r]^{\pi_p} \ar[d]_{\phi_{p,q}^{\cG}} & \cF_p \ar[r]
                    \ar[d]^{\phi_{p,q}^{\cF}}                & 0 \\
                    0 \ar[r]                                 & \cL_q \ar[r]                            & \cG_q \ar[r]_{\pi_q} & \cF_q \ar[r] & 0
                  }
                \]
                is a morphism of precartesian coextensions.
        \end{itemize}
        A \emph{morphism} of such coextensions is a natural transformation \(h \colon
        \cG \to \cG'\) for which \((\id_{\cL_p}, h_p, \id_{\cF_p})\) is a morphism in
        the sense of Definition~\ref{def:morphism_precoext}, for each \(p\). In
        particular, each \(h_p\) respects basepoints. The resulting category is denoted
        by \(\Pcoex(\cF, \cL)\).
      \end{De}

      \begin{Rem}
        We note the following regarding the notation in
        Definition~\ref{def:pcoex_functor}: Since \(\cL_p\) and \(\cL_q\) are functors
        defined over different categories (\(\mathbb{H}(\cF_p)\) and
        \(\mathbb{H}(\cF_q)\)), the transformation \(\alpha_{p,q}\) is in actuallity a
        natural transformation \(\cL_p\to \phi_{p,q}^*( \cL_q)\), to be fully precise.
        We avoid complicating the notation and simply write
        \(\alpha_{p,q}:\cL_p\to\cL_q\), since it can not cause confusion.
      \end{Rem}

\subsection{Monoidal structure}

    \begin{Cor}
      \(\Pcoex(\cF,\cL)\) is a groupoid.
    \end{Cor}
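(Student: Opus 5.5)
The plan is to reduce the statement pointwise to the affine Short Five Lemma~\ref{lem:short5_precart}. Let \(h\colon\cG\to\cG'\) be a morphism in \(\Pcoex(\cF,\cL)\). By Definition~\ref{def:pcoex_functor}, \(h\) is a natural transformation of sheaves of monoids over \(P\). For each \(p\in P\), the triple \((\id_{\cL_p},h_p,\id_{\cF_p})\) is a morphism of precartesian coextensions of \(\cF_p\) by \(\cL_p\). Since \(\id_{\cL_p}\) and \(\id_{\cF_p}\) are isomorphisms, Lemma~\ref{lem:short5_precart}(2) shows that each \(h_p\colon\cG_p\to\cG'_p\) is a bijective monoid homomorphism. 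Equivalently, Corollary~\ref{cor:Pcoex_groupoid} applied to \(\Pcoex(\cF_p,\cL_p)\) shows that \(h_p\) is invertible there, and its inverse \(h_p^{-1}\) is again a morphism of precartesian coextensions. Concretely, \(h_p^{-1}\) commutes with the \(\cL_p\)-action and with the projections. It also sends basepoints to basepoints: if \(n'\) is a basepoint of \(\cG'_p\), write \(n'=u\pt h_p(n_0)\) with \(n_0\) a basepoint and \(u\) invertible (Lemma~\ref{le:basepoints_invertibly_related}); then \(h_p^{-1}(n')=u\pt n_0\), which is a basepoint.

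It remains to check that the family \((h_p^{-1})_{p\in P}\) is natural, i.e.\ a morphism of sheaves \(\cG'\to\cG\). For \(q\le p\), naturality of \(h\) gives \(\phi^{\cG'}_{p,q}\circ h_p=h_q\circ\phi^{\cG}_{p,q}\). Pre-composing with \(h_p^{-1}\) and post-composing with \(h_q^{-1}\) yields
\[
  h_q^{-1}\circ\phi^{\cG'}_{p,q} \;=\; \phi^{\cG}_{p,q}\circ h_p^{-1}.
\]
This is exactly the naturality square for \(h^{-1}\). Hence \(h^{-1}\) is a natural transformation. Pointwise it is a morphism of precartesian coextensions, so it is a morphism in \(\Pcoex(\cF,\cL)\). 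It is inverse to \(h\) because composition is computed pointwise. Thus every morphism is an isomorphism, and \(\Pcoex(\cF,\cL)\) is a groupoid.

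I do not expect a genuine obstacle. The only point needing care is that the inverse respects basepoints and the action, and this is already contained in the affine groupoid statement, Corollary~\ref{cor:Pcoex_groupoid}. Naturality of the inverse is the formal fact that a pointwise invertible natural transformation has a natural inverse.
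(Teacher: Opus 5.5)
Your proposal is correct and follows essentially the same route as the paper, which simply observes that an isomorphism of sheaves is a natural transformation that is invertible at every stalk and then appeals to the affine result. You additionally verify two things the paper leaves implicit: that each $h_p^{-1}$ again respects the action, the projections and the basepoints, and that the family $(h_p^{-1})_{p\in P}$ is natural.
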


    \begin{proof}
      As an isomorphism of sheaves is exactly a natural transformation that is an
      isomorphism at every stalk, the result follows from our affine discussion.
    \end{proof}

    We now establish the analogous monoidal structure for \(\Pcoex(\cF,\cL)\), before
    turning to the cocycle description of \(\Pcoex(\cF,\cL)\). As everything here is
    stalkwise, we simply have to check that Theorem~\ref{thm:monoidal_Pcoex_affine},
    applied at each stalk, assembles into a well-defined object and natural
    isomorphisms of sheaves, compatibly with the restriction maps \(\phi_{p,q}\).

    \begin{Th} \label{thm:monoidal_Pcoex_functor}
      For every \(\cL\in\cHCM(\cF)\), \(\Pcoex(\cF,\cL)\) is a symmetric monoidal
      groupoid, with monoidal product the stalkwise Baer sum \((\cG+\cG')_p := \cG_p +
      \cG'_p\) and unit \(\cL\rtimes\cF\).
    \end{Th}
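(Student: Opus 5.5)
The plan is to reduce everything to Theorem~\ref{thm:monoidal_Pcoex_affine}, applied at each point \(p\in P\), and to check that the stalkwise constructions glue, i.e.\ are compatible with the restriction maps. First, for \(\cG,\cG'\in\Pcoex(\cF,\cL)\) with projections \(\pi,\pi'\), I define \((\cG+\cG')_p := \nabla_*\Delta^*(\cG_p\times\cG'_p)\), using the explicit descriptions of Construction~\ref{const:pushforward} and the pullback construction. Elements of \((\cG+\cG')_p\) are classes \([u,(n,n')]\) with \(\pi_p(n)=\pi'_p(n')=m\) and \(u\in\cL_p(m)\). For \(q\le p\) I set
\[ \phi^{\cG+\cG'}_{p,q}[u,(n,n')] := \bigl[\alpha_{p,q}(u),\,(\phi^{\cG}_{p,q}(n),\phi^{\cG'}_{p,q}(n'))\bigr]. \]

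The key step, and the main obstacle, is showing this is well defined. The equivalence relation depends on \(\xi\), hence on chosen cleavages \(\kappa_p,\kappa'_p\), and \(\phi^{\cG}_{p,q}\) sends basepoints to basepoints but need not send \(\kappa_p(m)\) to \(\kappa_q(\phi_{p,q}(m))\). I would use condition (ii) of Definition~\ref{def:pcoex_functor} together with Lemma~\ref{le:basepoints_invertibly_related}. These give a unit \(v\in\cL_q(\phi_{p,q}(m))^\times\) with \(\phi^\cG_{p,q}(\kappa_p(m)) = v\pt\kappa_q(\phi_{p,q}(m))\), and hence
\[ \xi_q(\phi^\cG_{p,q}(n)) = \alpha_{p,q}(\xi_p(n))\cdot v, \]
with the same \(v\) for every \(n\) in the fibre. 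Similarly there is a unit \(v'\) for \(\cG'\). The invariant \(u\cdot\xi_p(n)\cdot\xi'_p(n')\) is therefore carried to \(\alpha_{p,q}(u\cdot\xi_p(n)\cdot\xi'_p(n'))\cdot vv'\). Since \(vv'\) is independent of the representative, the relation is preserved, and the map is well defined; it uses only that \(\alpha_{p,q}\) is a monoid map commuting with the \(m_*\).

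Next I verify the remaining properties of the glued object. Each \(\phi^{\cG+\cG'}_{p,q}\) is a monoid homomorphism, because the product formula only involves \(m_*\), which commute with \(\alpha_{p,q}\) by naturality. It is equivariant along \(\alpha_{p,q}\) by construction. It sends the basepoint \([e,(\kappa_p(m),\kappa'_p(m))]\) to \([e,(v\pt\kappa_q,v'\pt\kappa'_q)]\), which equals \((vv')\pt\kappa''_q\) and is a basepoint since \(vv'\) is invertible. Functoriality \(\phi_{q,r}\phi_{p,q}=\phi_{p,r}\) follows from the corresponding identities for \(\cG,\cG'\) and the cocycle condition on \(\alpha\). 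So \(\cG+\cG'\in\Pcoex(\cF,\cL)\). On morphisms, \(h+h'\) is given stalkwise by Lemmas~\ref{le:pushforward_functorial_L} and~\ref{le:pushforward_functorial_M}, and naturality in \(p\) is immediate from the formula \([u,(n,n')]\mapsto[u,(h_p(n),h'_p(n'))]\).

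Finally, I define the unit \(\cL\rtimes\cF\) with stalks \(\cL_p\rtimes\cF_p\) and restrictions \((x,a)\mapsto(\alpha_{p,q}(x),\phi_{p,q}(a))\). This is a homomorphism by naturality of \(\alpha\), and it preserves the basepoints \((e,a)\). The structure isomorphisms of Theorem~\ref{thm:monoidal_Pcoex_affine} (swap, reassociation via the triple box product, and the unitor \(\upsilon\)) are given by explicit formulas that involve only \(\alpha\)-compatible operations, so they commute with restriction and form natural transformations. Each is stalkwise an isomorphism, so it is an isomorphism in the groupoid \(\Pcoex(\cF,\cL)\). The coherence axioms hold because they hold stalkwise and morphisms of sheaves are determined stalkwise.
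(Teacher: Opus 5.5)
Your proposal is correct. It follows the same overall plan as the paper: apply Theorem~\ref{thm:monoidal_Pcoex_affine} at each $p$, then check compatibility with the restriction maps. The difference is in how that last step is justified.

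\textbf{How the paper handles it.} The paper settles the compatibility step in one sentence, saying that pullbacks, pushforwards and the fold map respect filtered colimits and limits, ``and thus gluing''.

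\textbf{How you handle it.} You write down $\phi^{\cG+\cG'}_{p,q}$ explicitly and verify its properties directly. This is the more informative route. An object of $\Pcoex(\cF,\cL)$ is just a functor on $P^{op}$ satisfying stalkwise conditions, so there is no sheaf gluing to check. What actually needs proof is that the Baer sum is functorial along the restriction maps. Two features of these maps make that non-trivial:
\begin{itemize}
\item They are morphisms over the non-identity data $(\alpha_{p,q},\phi_{p,q})$, so they fall outside Lemmas~\ref{le:pushforward_functorial_L} and~\ref{le:pushforward_functorial_M}.
\item They need not send $\kappa_p$ to $\kappa_q$, and in general cleavages cannot be chosen so that all of them do.
\end{itemize}

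\textbf{Your key observation.} You note that $\phi^{\cG}_{p,q}(\kappa_p(m))=v\pt\kappa_q(\phi_{p,q}(m))$ for a unit $v$ depending only on the fibre. Hence the pushforward invariant is merely rescaled by $vv'$. This is exactly what makes the induced map well defined and basepoint-preserving. The same computation, with $\alpha=\id$ and $\phi=\id$, also makes $h+h'$ well defined without any compatibility of cleavages.

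So the paper's route is shorter, but yours supplies the verification it leaves implicit.

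\textbf{One small omission.} You check only that the chosen basepoint is preserved, not every basepoint. This follows at once from equivariance, Lemma~\ref{le:basepoints_invertibly_related}, and the fact that $\alpha_{p,q}$ maps units to units.
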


    \begin{proof}
      We know by Theorem~\ref{thm:monoidal_Pcoex_affine} that at each stalk \(p \in
      P\), the Baer sum makes \(\cG_p+\cG'_p \in \Pcoex(\cF_p,\cL_p)\) into a symmetric
      monoidal groupoid. But as the Baer sum is built out of pullbacks, pushforwards
      and the fold map, all of which respect filtered colimits and limits, and thus
      gluing, the result follows.
    \end{proof}

\subsection{Conditions on the global cocycle monoid}
    \label{sec:global_grillet_pcoex}

    We now proceed to define the cochains and cocycles, and subsequently the
    classification theorem. Much as in the affine case, we avoid inverses through
    grouping the positive and negative parts.

    Let \(0 \to \cL \to \cG \to \cF \to 0\) be a precartesian coextension of sheaves
    and fix \(q \leq p\) in \(P\). By Theorem~\ref{thm:classification_pcoex} and
    Condition~(i) of Definition~\ref{def:pcoex_functor}, the stalk \(\cG_p\) is
    isomorphic to
    \[ {_{f(p)}}\cF_p \;=\; \{(a,x) \mid x \in \cF_p,\ a \in \cL_p(x)\}, \]
    for some \(f(p,-,-) \in \sZ^1(\cF_p, \cL_p)\). Recall that the multiplication is
    given by
    \[ (a,x)(b,y) \;=\; (y_*(a) \cdot x_*(b) \cdot f(p,x,y),\, xy). \]
    We fix such an identification for each \(p \in P\) and write elements of \(\cG_p\)
    as pairs \((a,x)\) with \(x \in \cF_p\) and \(a \in \cL_p(x)\). The basepoint over
    \(x\) becomes \((e_x, x)\) and the \(\cL_p(x)\)-action is given by \(z \pt (a,x) =
    (z \cdot a, x)\) under this isomorphism.\\

    By Condition~(ii) of Definition~\ref{def:pcoex_functor} (and in the same notation),
    we know that
    \begin{equation} \label{eq:equivariance_action}
      \phi^{\cG}_{p,q}(z \pt n) \;=\; \alpha_{p,q}(z) \pt \phi^{\cG}_{p,q}(n)
    \end{equation}
    must hold for all \(n \in \cG_p\) and \(z \in \cL_p(\pi_p(n))\). Moreover,
    commutativity of the right square gives us
    \[ \pi_q \circ \phi^{\cG}_{p,q} \;=\; \phi_{p,q} \circ \pi_p. \]

    Let \((a,x) \in \cG_p\). Then \(\pi_p(a,x) = x\) and thus,
    \(\pi_q(\phi^{\cG}_{p,q}(a,x)) = \phi_{p,q}(x)\) must hold. This means that the
    second coordinate of \(\phi^{\cG}_{p,q}(a,x)\) is determined to be
    \(\phi_{p,q}(x)\). Let us now focus on the first coordinate and denote it by some
    function \(h_x \colon \cL_p(x) \to \cL_q(\phi_{p,q}(x))\). We have
    \begin{equation} \label{eq:restriction}
      \phi^{\cG}_{p,q}(a,x) \;=\; \bigl(h_x(a), \phi_{p,q}(x)\bigr).
    \end{equation}
    Plugging \((a,x)\) for \(n\) in Equation~\eqref{eq:equivariance_action} gives us
    \begin{eqnarray*}
      \phi^{\cG}_{p,q}(z \pt (a,x)) & = & \alpha_{p,q}(z) \pt \phi^{\cG}_{p,q}(a,x)             \\
                                    & = & \alpha_{p,q}(z) \pt \bigl(h_x(a), \phi_{p,q}(x)\bigr) \\
                                    & = & (\alpha_{p,q}(z) \cdot h_x(a), \phi_{p,q}(x)).
    \end{eqnarray*}
    On the other hand,
    \begin{eqnarray*}
      \phi^{\cG}_{p,q}(z \pt (a,x)) & = & \phi^{\cG}_{p,q}(z \cdot a, x)) \\
                                    & = & \bigl(h_x(z \cdot a), \phi_{p,q}(x)\bigr)
    \end{eqnarray*}
    and thus we may deduce
    \[ \alpha_{p,q}(z) \cdot h_x(a) \;=\; h_x(z \cdot a). \]
    Now, letting \(a = e_x\) and \(z = z \cdot e_x\), gives
    \[
      \alpha_{p,q}(z) \cdot h_x(e_x) \;=\; h_x(z)
      \qquad \forall\, z \in \cL_p(x).
    \]
    In other words, we have shown that \(h_x\) is completely determined by its value at
    the identity \(e_x\). Hence, Equation~\eqref{eq:restriction} becomes
    \[
      \phi^{\cG}_{p,q}(a,x) \;=\; \bigl(\alpha_{p,q}(a) \cdot h_x(e_x),
      \phi_{p,q}(x)\bigr).
    \]
    Defining
    \[ g(p,q,x) \;:=\; h_x(e_x) \;\in\; \cL_q\bigl(\phi_{p,q}(x)\bigr) \]
    gives us
    \begin{equation} \label{eq:phiG_form}
      \phi^{\cG}_{p,q}(a,x) \;=\; \bigl(\alpha_{p,q}(a) \cdot g(p,q,x),\;
      \phi_{p,q}(x)\bigr),
    \end{equation}
    where \((a,x) \in \cG_p\). Every \(\phi_{p,q}^{\cG}\) is thus of the above form for
    a unique function \(g(p,q,-) \colon \cF_p \to \cL_q\).\\

    Our aim now is to understand under which conditions on \(\bigl(f(p,-,-),\,
    g(p,q,-)\bigr)\) \(\phi^{\cG}_{p,q}(a,x)\) is a sheaf.

\subsubsection*{The conditions on \(g\)}

      Recall that for \(\phi^{\cG}_{p,q}\) to be a sheaf means that the restriction
      maps must be composable monoid homomorphisms. Moreover, identities must be
      respected, which, for monoids unlike for groups, is always an additional
      constraint. By this, we mean that the identity restriction must be the identity
      and that the restriction maps must map identities to identities. We will start to
      explore what each of these conditions mean. Once we have assembled all these, we
      will finally define the category which encodes this information in the following
      section.

      \begin{Le} \label{lem:phiG_monoid_hom}
        The map \(\phi^{\cG}_{p,q}\) is a monoid homomorphism if and only if
        \begin{equation} \label{eq:Ziv}
          \alpha_{p,q}(f(p,x,y)) \cdot g(p,q,xy) \;=\;
          \phi_{p,q}(y)_*(g(p,q,x)) \cdot \phi_{p,q}(x)_*(g(p,q,y)) \cdot f(q,\phi_{p,q}(x),\phi_{p,q}(y))
        \end{equation}
        holds for all \(x, y \in \cF_p\).
      \end{Le}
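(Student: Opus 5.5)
We will compute both sides of the homomorphism identity \(\phi^{\cG}_{p,q}((a,x)(b,y)) = \phi^{\cG}_{p,q}(a,x)\cdot\phi^{\cG}_{p,q}(b,y)\) explicitly, using the normal form of Equation~\eqref{eq:phiG_form} and the multiplication rule of \({}_{f(p)}\cF_p\) and \({}_{f(q)}\cF_q\). We then compare first coordinates; the second coordinates agree automatically because \(\phi_{p,q}\) is a homomorphism. Throughout write \(x' := \phi_{p,q}(x)\) and \(y' := \phi_{p,q}(y)\). The one structural input is naturality of \(\alpha_{p,q}\colon \cL_p \to \phi_{p,q}^*\cL_q\), which gives \(\alpha_{p,q}(y_*(a)) = y'_*(\alpha_{p,q}(a))\). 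We also use that each \(y'_*\) is a monoid homomorphism and that \(\cL_q(x'y')\) is commutative.

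For the left side, \((a,x)(b,y) = (y_*(a)\cdot x_*(b)\cdot f(p,x,y),\, xy)\). Applying \(\phi^{\cG}_{p,q}\) gives the first coordinate
\[
y'_*(\alpha_{p,q}(a))\cdot x'_*(\alpha_{p,q}(b))\cdot \alpha_{p,q}(f(p,x,y))\cdot g(p,q,xy).
\]
For the right side, the product of \((\alpha_{p,q}(a)\cdot g(p,q,x), x')\) and \((\alpha_{p,q}(b)\cdot g(p,q,y), y')\) in \({}_{f(q)}\cF_q\) has first coordinate
\[
y'_*(\alpha_{p,q}(a))\cdot x'_*(\alpha_{p,q}(b))\cdot y'_*(g(p,q,x))\cdot x'_*(g(p,q,y))\cdot f(q,x',y').
\]

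For the direction ``\eqref{eq:Ziv} implies homomorphism'', multiply \eqref{eq:Ziv} by the common factor \(y'_*(\alpha_{p,q}(a))\cdot x'_*(\alpha_{p,q}(b))\); this makes the two coordinates equal. For the converse, specialise to \(a = e_x\) and \(b = e_y\). The common factor then becomes \(e_{x'y'}\), since \(\alpha\) and the \(y'_*\) preserve units, so equality of the first coordinates is exactly \eqref{eq:Ziv}. This specialisation is the key step, because without inverses we cannot cancel the common factor in general. Choosing the basepoints \((e_x,x)\) and \((e_y,y)\) sidesteps that, and this is the ``bookkeeping'' point where the monoid setting differs from the group case.

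Finally, we check the unit. We need \(\phi^{\cG}_{p,q}(e_{1},1) = (g(p,q,1), 1)\) to equal \((e_1,1)\). This follows from \eqref{eq:Ziv} with \(x = y = 1\), combined with the reducedness convention \(f(p,1,1) = e_1\) and \(f(q,1,1) = e_1\): these give \(g(p,q,1) = g(p,q,1)^2\). Idempotence alone does not force \(g(p,q,1) = e_1\) in a monoid. However, the basepoint-preservation clause in Definition~\ref{def:pcoex_functor}(ii) together with Lemma~\ref{le:cartesian_criterion} makes \(g(p,q,1)\) invertible, and an invertible idempotent equals \(e_1\). I expect this unit check to be the only subtle point. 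If it is not meant to be part of this lemma, since unit preservation is treated separately among the conditions on \(g\), we will simply note that the displayed identity characterises multiplicativity and leave the unit to the subsequent lemmas.
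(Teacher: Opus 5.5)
Your proposal is correct and follows the paper's proof essentially verbatim. Both arguments write the two first coordinates as the common factor \(\phi_{p,q}(y)_*(\alpha_{p,q}(a))\cdot\phi_{p,q}(x)_*(\alpha_{p,q}(b))\) times the respective sides of \eqref{eq:Ziv}. Both then multiply through for one direction and specialise to \(a=e_x\), \(b=e_y\) for the other.

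The paper handles unit preservation separately, in Lemma~\ref{le:Z_norm_1} and as condition (4) of Definition~\ref{def:Z1_pcoex_functor}, so your closing unit check is not needed for this lemma. Your invertible-idempotent argument there is still valid when \(\phi^{\cG}_{p,q}\) comes from a genuine coextension and therefore preserves basepoints.
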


      \begin{proof}
        Let \(x, y \in \cF_p\) be fixed and take arbitrary \(a \in \cL_p(x)\), \(b \in
        \cL_p(y)\). Using \eqref{eq:phiG_form}, we have
        \begin{eqnarray}
          \phi^{\cG}_{p,q}\bigl((a,x)(b,y)\bigr) & = & \phi^{\cG}_{p,q}\left(\bigl(y_*(a) \cdot x_*(b) \cdot f(p,x,y),\; xy\bigr)\right)                                                                                                                 \notag \\
                                                 & = & \Bigl(\alpha_{p,q}(y_*(a)) \cdot \alpha_{p,q}(x_*(b)) \cdot \alpha_{p,q}(f(p,x,y)) \cdot g(p,q,xy),\; \phi_{p,q}(xy)\Bigr)                                                                        \notag \\
                                                 & = & \Bigl(\underbrace{\phi_{p,q}(y)_*(\alpha_{p,q}(a)) \cdot \phi_{p,q}(x)_*(\alpha_{p,q}(b))}_{=:\,U(a,b)} \;\cdot\; \underbrace{\alpha_{p,q}(f(p,x,y)) \cdot g(p,q,xy)}_{=:\,W_1}, \label{eq:phiG_hom_LHS} \\
                                                 &   & \ \ \phi_{p,q}(x)\phi_{p,q}(y)\Bigr).                                                                                                                                                             \notag
        \end{eqnarray}
        Here we used the fact that as \(\alpha_{p,q} \colon \cL_p \to
        \phi_{p,q}^*\cL_q\) is a morphism of systems, we have
        \[ \alpha_{p,q}(y_*(a)) \;=\; \phi_{p,q}(y)_*(\alpha_{p,q}(a)). \]

        \medskip\noindent On the other hand,
        \begin{eqnarray}
          \phi^{\cG}_{p,q}(a,x) \cdot \phi^{\cG}_{p,q}(b,y) & = & \bigl(\alpha_{p,q}(a) \cdot g(p,q,x),\, \phi_{p,q}(x)\bigr) \cdot \bigl(\alpha_{p,q}(b) \cdot g(p,q,y),\, \phi_{p,q}(y)\bigr) \notag \\
                                                            & = & \Bigl(\phi_{p,q}(y)_*\bigl(\alpha_{p,q}(a) \cdot g(p,q,x)\bigr)\cdot                                                          \notag \\
                                                            &   & \ \ \phi_{p,q}(x)_*\bigl(\alpha_{p,q}(b) \cdot g(p,q,y)\bigr) \cdot                                                           \notag \\
                                                            &   & \ \ f(q,\phi_{p,q}(x),\phi_{p,q}(y)),\; \phi_{p,q}(x)\phi_{p,q}(y)\Bigr)                                                      \notag \\
                                                            & = & \Bigl(\underbrace{\phi_{p,q}(y)_*(\alpha_{p,q}(a)) \cdot \phi_{p,q}(x)_*(\alpha_{p,q}(b))}_{=\,U(a,b)} \cdot \label{eq:phiG_hom_RHS} \\
                                                            &   & \ \ \underbrace{\phi_{p,q}(y)_*(g(p,q,x)) \cdot \phi_{p,q}(x)_*(g(p,q,y)) \cdot f(q,\phi_{p,q}(x),\phi_{p,q}(y))}_{=:\,W_2},  \notag \\
                                                            &   & \ \ \phi_{p,q}(x)\phi_{p,q}(y)\Bigr).                                                                                         \notag
        \end{eqnarray}

        \medskip\noindent Comparing \eqref{eq:phiG_hom_LHS} and\eqref{eq:phiG_hom_RHS}
        we claim that they equal exactly when \(W_1 = W_2\).

        Let \(U(a,b)\cdot W_1 = U(a,b) \cdot W_2\) and let \(a = e_x, b = y_y\). Thus,
        we have
        \begin{eqnarray*}
          U(e_x, e_y) & = & \phi_{p,q}(y)_*(\alpha_{p,q}(e_x)) \cdot \phi_{p,q}(x)_*(\alpha_{p,q}(e_y)) \\
                      & = & \phi_{p,q}(y)_*(e_{\phi_{p,q}(x)}) \cdot \phi_{p,q}(x)_*(e_{\phi_{p,q}(y)}) \\
                      & = & e_{\phi_{p,q}(xy)} \cdot e_{\phi_{p,q}(xy)}                                 \\
                      & = & e_{\phi_{p,q}(xy)},
        \end{eqnarray*}
        implying the claim.

        Decyphering what \(W_1\) and \(W_2\) were, we get
        \(\phi^{\cG}_{p,q}\bigl((a,x)(b,y)\bigr) = \phi^{\cG}_{p,q}(a,x) \cdot
        \phi^{\cG}_{p,q}(b,y)\) if and only if
        \[
          \alpha_{p,q}(f(p,x,y)) \cdot g(p,q,xy) \;=\; \phi_{p,q}(y)_*(g(p,q,x)) \cdot
          \phi_{p,q}(x)_*(g(p,q,y)) \cdot f(q,\phi_{p,q}(x),\phi_{p,q}(y)),
        \]
        which is exactly~\eqref{eq:Ziv}.
      \end{proof}

      Next, we consider what conditoin must be satisfied for the composition of
      \(\phi^G\) to hold.

      \begin{Le} \label{lem:phiG_transitive}
        We have \(\phi^{\cG}_{q,r} \circ \phi^{\cG}_{p,q} = \phi^{\cG}_{p,r}\) if and
        only if
        \begin{equation} \label{eq:Ziii}
          \alpha_{q,r}(g(p,q,x)) \cdot g(q,r,\phi_{p,q}(x)) \;=\; g(p,r,x)
        \end{equation}
        holds for all \(x \in \cF_p\), where \(r \leq q \leq p \in P\).
      \end{Le}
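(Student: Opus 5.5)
The plan is to compute both sides of the claimed identity $\phi^{\cG}_{q,r}\circ\phi^{\cG}_{p,q}=\phi^{\cG}_{p,r}$ on an arbitrary element $(a,x)\in\cG_p$, using the normal form \eqref{eq:phiG_form}, and then compare first coordinates. This is the same strategy as in Lemma~\ref{lem:phiG_monoid_hom}: reduce the equality to one on a ``constant part'' by evaluating at the unit $e_x$.

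First, the second coordinates. Applying \eqref{eq:phiG_form} twice gives
\[
\phi^{\cG}_{q,r}\bigl(\phi^{\cG}_{p,q}(a,x)\bigr)=\Bigl(\alpha_{q,r}\bigl(\alpha_{p,q}(a)\cdot g(p,q,x)\bigr)\cdot g\bigl(q,r,\phi_{p,q}(x)\bigr),\;\phi_{q,r}\phi_{p,q}(x)\Bigr).
\]
Directly,
\[
\phi^{\cG}_{p,r}(a,x)=\bigl(\alpha_{p,r}(a)\cdot g(p,r,x),\;\phi_{p,r}(x)\bigr).
\]
Since $\cF$ is a functor, $\phi_{q,r}\phi_{p,q}=\phi_{p,r}$, so the second coordinates always agree. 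Both first coordinates live in $\cL_r(\phi_{p,r}(x))$.

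Next, the first coordinates. The component $\alpha_{q,r}$ at the object $\phi_{p,q}(x)$ is a monoid homomorphism, so the left first coordinate expands to
\[
\alpha_{q,r}(\alpha_{p,q}(a))\cdot\alpha_{q,r}(g(p,q,x))\cdot g(q,r,\phi_{p,q}(x)).
\]
The cocycle condition $\alpha_{p,r}=\phi_{p,q}^*(\alpha_{q,r})\circ\alpha_{p,q}$ for the system $\cL\in\cHCM(\cF)$ identifies $\alpha_{q,r}(\alpha_{p,q}(a))$ with $\alpha_{p,r}(a)$. The two first coordinates therefore read
\[
\alpha_{p,r}(a)\cdot W_1 \qquad\text{and}\qquad \alpha_{p,r}(a)\cdot W_2,
\]
where $W_1=\alpha_{q,r}(g(p,q,x))\cdot g(q,r,\phi_{p,q}(x))$ and $W_2=g(p,r,x)$.

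It remains to show that $\alpha_{p,r}(a)\cdot W_1=\alpha_{p,r}(a)\cdot W_2$ for all $a$ if and only if $W_1=W_2$.
\begin{itemize}
\item If $W_1=W_2$ holds (this is \eqref{eq:Ziii}), the equality holds for every $a$, giving the ``if'' direction.
\item Conversely, suppose the composites agree for all $(a,x)$. Specialise to $a=e_x$. Because $\alpha_{p,r}$ is a morphism of monoids it preserves units, so $\alpha_{p,r}(e_x)=e_{\phi_{p,r}(x)}$, and we get $W_1=W_2$ for every $x$.
\end{itemize}
No cancellation is needed, which matters since we lack inverses.

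The only point requiring care is bookkeeping of which component of $\alpha$ and which stalk each element lives in. In particular, $\alpha_{q,r}$ must be applied at $\phi_{p,q}(x)$, and the cocycle identity for $\alpha$ must be read componentwise at $x$. I expect this index-tracking, rather than any genuine difficulty, to be the main obstacle; everything else is a direct unwinding analogous to Lemma~\ref{lem:phiG_monoid_hom}.
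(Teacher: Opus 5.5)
Your proposal is correct and follows essentially the same route as the paper. You expand both sides via \eqref{eq:phiG_form}, use the cocycle condition on \(\alpha\) to rewrite the first coordinates as \(\alpha_{p,r}(a)\cdot W_1\) and \(\alpha_{p,r}(a)\cdot W_2\), and then specialise to \(a=e_x\) to get the converse.
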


      \begin{proof}
        Let \(x \in \cF_p\) be fixed and \(a \in \cL_p(x)\) be arbitrary. Using
        Formula~\eqref{eq:phiG_form}, we have
        \begin{eqnarray}
          \phi^{\cG}_{q,r}\bigl(\phi^{\cG}_{p,q}(a,x)\bigr) & = & \phi^{\cG}_{q,r}\bigl(\alpha_{p,q}(a) \cdot g(p,q,x),\, \phi_{p,q}(x)\bigr)                                                                   \notag \\
                                                            & = & \Bigl(\alpha_{q,r}\bigl(\alpha_{p,q}(a) \cdot g(p,q,x)\bigr) \cdot g\bigl(q,r,\phi_{p,q}(x)\bigr),\; \phi_{q,r}\bigl(\phi_{p,q}(x)\bigr)\Bigr)\notag \\
                                                            & = & \Bigl(\alpha_{q,r}(\alpha_{p,q}(a)) \cdot \alpha_{q,r}(g(p,q,x)) \cdot g\bigl(q,r,\phi_{p,q}(x)\bigr),\; \phi_{p,r}(x)\Bigr)                  \notag \\
                                                            & = & \Bigl(\underbrace{\alpha_{p,r}(a)}_{=:\,U(a)} \;\cdot\; \underbrace{\alpha_{q,r}(g(p,q,x)) \cdot g(q,r,\phi_{p,q}(x))}_{=:\,W_1},\; \phi_{p,r}(x)\Bigr)
        \end{eqnarray}
        In the last step, we used the fact that \(\cL\) satisfies the cocycle condition
        by definition, meaning that \(\alpha_{q,r}(\alpha_{p,q}(a)) = \alpha_{p,r}(a)\)
        holds.

        \medskip\noindent On the other hand,
        \begin{equation}
          \phi^{\cG}_{p,r}(a,x) \;=\; \Bigl(\underbrace{\alpha_{p,r}(a)}_{=\,U(a)} \cdot
          \underbrace{g(p,r,x)}_{=:\,W_2},\;\; \phi_{p,r}(x)\Bigr).
        \end{equation}
        A similar argument to the above (taking \(a = e_x\)) gives us that
        \(\phi^{\cG}_{q,r}\bigl(\phi^{\cG}_{p,q}(a,x)\bigr) = \phi^{\cG}_{p,r}(a,x)\)
        if and only if \(W_1 = W_2\). That is to say, if and only if
        \[ \alpha_{q,r}(g(p,q,x)) \cdot g(q,r,\phi_{p,q}(x)) \;=\; g(p,r,x). \qedhere \]
      \end{proof}

      Next, we explore the identity and what it means to be respected.

      \begin{Le} \label{le:Z_norm_pp}
        We have \(\phi^{\cG}_{p,p} = \id\) if and only if \(g(p,p,x) = e_x\) for all
        \(x \in \cF_p\), \(p \in P\).
      \end{Le}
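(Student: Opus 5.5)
The plan is to mimic the arguments of Lemmas~\ref{lem:phiG_monoid_hom} and~\ref{lem:phiG_transitive}, specialised to $q = p$. The first step is to note that $\phi_{p,p} = \id_{\cF_p}$, since $\cF$ is a functor, and that $\alpha_{p,p} = \id$ by the axioms of a system over $\cF$. Substituting $q = p$ into Formula~\eqref{eq:phiG_form} then gives, for every $(a,x) \in \cG_p$,
\[
  \phi^{\cG}_{p,p}(a,x) \;=\; \bigl(a \cdot g(p,p,x),\; x\bigr).
\]

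For the direction ``$\Leftarrow$'', assume $g(p,p,x) = e_x$. The right-hand side above is then $(a,x)$, so $\phi^{\cG}_{p,p} = \id$ under the fixed identification $\cG_p \cong {}_{f(p)}\cF_p$.

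For the direction ``$\Rightarrow$'', assume $\phi^{\cG}_{p,p}(a,x) = (a,x)$ for all $(a,x)$. Evaluating at the basepoint $(e_x,x)$ gives $(g(p,p,x),x) = (e_x,x)$. Comparing first coordinates yields $g(p,p,x) = e_x$. As in the earlier lemmas, the test element $a = e_x$ is exactly what removes the ``$U(a)$'' factor, so no cancellation in the monoid $\cL_p(x)$ is needed.

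The only point needing care is that the identification of $\cG_p$ with ${}_{f(p)}\cF_p$ was fixed once for each $p$. The same isomorphism is therefore used on source and target of $\phi^{\cG}_{p,p}$, and ``identity'' means the identity in these coordinates. With that observed, I expect no real obstacle; the argument is a two-line verification.
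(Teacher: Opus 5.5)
Your proof is correct and is essentially the paper's argument. Set $q=p$ in Formula~\eqref{eq:phiG_form} and use $\alpha_{p,p}=\id$ and $\phi_{p,p}=\id$ to get $\phi^{\cG}_{p,p}(a,x)=(a\cdot g(p,p,x),x)$, then evaluate at $a=e_x$; you additionally spell out the trivial converse, which the paper leaves implicit.
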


      \begin{proof}
        Setting \(p = q\) in Formulan~\eqref{eq:phiG_form}, we get
        \[
          \phi^{\cG}_{p,p}(a,x) \;=\; \bigl(\alpha_{p,p}(a) \cdot g(p,p,x),\,
          \phi_{p,p}(x)\bigr) \;=\; \bigl(a \cdot g(p,p,x),\, x\bigr).
        \]
        Thus \(\phi^{\cG}_{p,p} = \id\) is equivalent to \(a \cdot g(p,p,x) = a\) for
        every \(a \in \cL_p(x)\). In particular, this must hold for \(a = e_x\). We get
        \(e_x \cdot g(p,p,x) = e_x\), i.e.\ \(g(p,p,x) = e_x\), as desired.
      \end{proof}

      Just as in the affine case, where we had to impose the condition
      \(f(1_M,a)=e_a\), we have the following condition in the monoid sheaf case:

      \begin{Le} \label{le:Z_norm_1}
        We have \(\phi^{\cG}_{p,q}(1_{\cG_p}) = 1_{\cG_q}\) if and only if
        \(g(p,q,1_{\cF_p}) \;=\; e_{1_{\cF_q}}\).
      \end{Le}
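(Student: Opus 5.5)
The plan is to compute both sides explicitly in the coordinates fixed above, using Formula~\eqref{eq:phiG_form}. First we need to know what the identity elements of \(\cG_p\) and \(\cG_q\) are under the identifications \(\cG_p \cong {}_{f(p)}\cF_p\). By Lemma~\ref{le:cocycle_extension}, the unit of \({}_{f}M\) is \((e_{1_M}, 1_M)\). This relies on the normalisation \(f(1_M,a) = e_a\) in \(\sZ^1\): \((e_1,1)(x,a) = (a_*(e_1)\cdot 1_*(x)\cdot f(1,a), a) = (x,a)\), using that \(a_*\) is a monoid homomorphism. So \(1_{\cG_p} = (e_{1_{\cF_p}}, 1_{\cF_p})\) and \(1_{\cG_q} = (e_{1_{\cF_q}}, 1_{\cF_q})\).

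Next I apply \eqref{eq:phiG_form} with \(a = e_{1_{\cF_p}}\) and \(x = 1_{\cF_p}\):
\[
  \phi^{\cG}_{p,q}(1_{\cG_p}) \;=\; \bigl(\alpha_{p,q}(e_{1_{\cF_p}})\cdot g(p,q,1_{\cF_p}),\; \phi_{p,q}(1_{\cF_p})\bigr).
\]
The second coordinate is \(1_{\cF_q}\), since \(\phi_{p,q}\) is a monoid homomorphism (as \(\cF\) is a sheaf of monoids). For the first coordinate, \(\alpha_{p,q}\) is a morphism of systems, so each component is a monoid homomorphism \(\cL_p(1_{\cF_p}) \to \cL_q(\phi_{p,q}(1_{\cF_p})) = \cL_q(1_{\cF_q})\). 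Hence \(\alpha_{p,q}(e_{1_{\cF_p}}) = e_{1_{\cF_q}}\), and the first coordinate reduces to \(g(p,q,1_{\cF_p})\).

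Comparing with \(1_{\cG_q} = (e_{1_{\cF_q}},1_{\cF_q})\), the two pairs agree if and only if \(g(p,q,1_{\cF_p}) = e_{1_{\cF_q}}\). This gives both directions at once.

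The only point needing care is the identification of the units in the two-coordinate model. This uses the normalisation condition in \(\sZ^1\) and the fact that the chosen isomorphisms \(\cG_p \cong {}_{f(p)}\cF_p\) are monoid isomorphisms, so they send units to units. Everything else is a direct substitution.
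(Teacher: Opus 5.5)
Your proposal is correct and follows the paper's proof essentially verbatim: identify \(1_{\cG_p} = (e_{1_{\cF_p}}, 1_{\cF_p})\), substitute into Formula~\eqref{eq:phiG_form}, use that \(\alpha_{p,q}\) and \(\phi_{p,q}\) preserve units, and compare first coordinates. Your explicit check that \((e_{1_M},1_M)\) is the unit of \({}_fM\), via the normalisation \(f(1_M,a)=e_a\), is a small detail that the paper leaves implicit.
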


      \begin{proof}
        By definition of \(\cG\), its unit is \(1_{\cG_p} = (e_{1_{\cF_p}},\,
        1_{\cF_p})\). As \(\alpha_{p,q}\) and \(\phi_{p,q}\) respect units (morphisms),
        Formula~\eqref{eq:phiG_form} with \(x=y=1_{\cF_p}\) gives us
        \[
          \phi^{\cG}_{p,q}(1_{\cG_p}) \;=\; \bigl(\alpha_{p,q}(e_{1_{\cF_p}}) \cdot
          g(p,q,1_{\cF_p}),\; \phi_{p,q}(1_{\cF_p})\bigr) \;=\;
          \bigl(g(p,q,1_{\cF_p}),\; 1_{\cF_q}\bigr).
        \]
        This is equivalent to \(g(p,q,1_{\cF_p}) = e_{1_{\cF_q}}\).
      \end{proof}

\subsection{The global cocycle monoid}

    We summarise our discussion above with the following definition:

    \begin{De} \label{def:Z1_pcoex_functor}
      The \emph{global cocycle monoid} \(\sZ^1(\cF, \cL)\) consists of pairs \((f, g)\)
      such that for every \(q \leq p \in P\):
      \begin{itemize}
        \item \(f(p,-,-) \in \sZ^1(\cF_p, \cL_p)\) and
        \item \(g(p,q,-)\) assigns to each \(x \in \cF_p\) an element \(g(p,q,x) \in
              \cL_q(\phi_{p,q}(x))\).
      \end{itemize}
      These must satisfy the following constraints:
      \begin{enumerate}
        \item \(\alpha_{q,r}(g(p,q,x)) \cdot g(q,r,\phi_{p,q}(x)) = g(p,r,x)\),
              \label{eq:global_cocycle_1}
        \item \(\alpha_{p,q}(f(p,x,y)) \cdot g(p,q,xy)
              \;=\; \phi_{p,q}(y)_*(g(p,q,x)) \cdot \phi_{p,q}(x)_*(g(p,q,y)) \cdot
              f(q,\phi_{p,q}(x),\phi_{p,q}(y))\), \label{eq:global_cocycle_2}
        \item \(g(p,p,x) = e_x\),
              \label{eq:global_cocycle_3}
        \item \(g(p,q,1_{\cF_p}) = e_{1_{\cF_q}}\).
              \label{eq:global_cocycle_4}
      \end{enumerate}
      The multiplication of \(\sZ^1(\cF, \cL)\) is induced pointwise. The identity
      element is the pair \(f(p,-,-) = e\) and \(g(p,q,-) = e_{\phi_{p,q}}\).
    \end{De}

    \begin{Le}
      \(\sZ^1(\cF, \cL)\) is a well-defined commutative monoid.
    \end{Le}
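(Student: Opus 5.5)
We have to show three things: \(\sZ^1(\cF,\cL)\) is closed under the pointwise product \((f,g)\cdot(f',g') := (f\cdot f',\, g\cdot g')\), the proposed identity lies in it, and the operation is associative and commutative. The last point is immediate, because each component lives in a commutative monoid \(\cL_p(-)\) or \(\cL_q(-)\) and the product is computed pointwise. So the real content is closure and the unit. Here \(e_{\phi_{p,q}}\) is read as the function \(x\mapsto e_{\phi_{p,q}(x)}\).

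For closure, first note that \(f(p,-,-)\cdot f'(p,-,-)\in\sZ^1(\cF_p,\cL_p)\) by Proposition~\ref{prop:cocycle_coboundary_submonoids}, applied stalkwise. The four constraints are then checked one at a time, in the same style as that proposition. The only structural facts needed are that \(\alpha_{q,r}\), \(\alpha_{p,q}\) and all push-forwards \(\phi_{p,q}(y)_*\) are monoid homomorphisms, and that every target monoid is commutative.

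For constraint~(\ref{eq:global_cocycle_1}), expand \(\alpha_{q,r}(g(p,q,x)g'(p,q,x))\cdot g(q,r,\phi_{p,q}(x))g'(q,r,\phi_{p,q}(x))\) using multiplicativity of \(\alpha_{q,r}\). Then regroup the four factors into the \(g\)-part and the \(g'\)-part, and apply the constraint to each part to obtain \(g(p,r,x)g'(p,r,x)\).

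Constraint~(\ref{eq:global_cocycle_2}) is handled the same way. Split both sides into the unprimed factors and the primed factors. This uses that \(\alpha_{p,q}\) and \(\phi_{p,q}(y)_*\), \(\phi_{p,q}(x)_*\) are homomorphisms, and that all terms lie in the single commutative monoid \(\cL_q(\phi_{p,q}(xy))\). Multiplying the two instances of the constraint then gives the result.

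Constraints~(\ref{eq:global_cocycle_3}) and~(\ref{eq:global_cocycle_4}) are immediate, since \(e_x e_x=e_x\).

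For the unit, we check that \((e,e_{\phi})\) satisfies all four constraints. In each case both sides reduce to identity elements, because \(\alpha\) and the push-forwards send units to units. For example, in (\ref{eq:global_cocycle_2}) both sides equal \(e_{\phi_{p,q}(xy)}\), using \(\phi_{p,q}(y)_*(e_{\phi_{p,q}(x)}) = e_{\phi_{p,q}(xy)}\), exactly as in the computation of \(U(e_x,e_y)\) in Lemma~\ref{lem:phiG_monoid_hom}. Multiplying any pair by this element leaves it unchanged pointwise.

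The only step needing care is (\ref{eq:global_cocycle_2}). There one must make sure that every factor genuinely lands in the same monoid \(\cL_q(\phi_{p,q}(xy))\) before regrouping, since \(\alpha_{p,q}\) is really a transformation \(\cL_p\to\phi_{p,q}^*\cL_q\) and the push-forwards change the index. Once the indices are tracked, the regrouping is purely formal, so I do not expect any genuine obstacle.
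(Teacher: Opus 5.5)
Your proposal is correct and follows the paper's proof essentially step for step. The $f$-component is handled stalkwise by Proposition~\ref{prop:cocycle_coboundary_submonoids}, constraints~(3) and~(4) follow from $e_xe_x=e_x$, constraints~(1) and~(2) come from expanding via multiplicativity of $\alpha_{q,r}$, $\alpha_{p,q}$ and the push-forwards and regrouping into unprimed and primed factors, and your explicit check of the unit pair against all four constraints (with the index-tracking in~(2)) just spells out what the paper calls straightforward.
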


    \begin{proof}
      What we have to check is that the identity and multiplication respect all four
      conditions.

      It is straightforward to see that the identity pair, which is to say \(f(p,a,b) =
      e_{ab}\) and \(g(p,q,x) = e_{\phi_{p,q}(x)}\), respect the identities.

      For the product, take \((f_1,g_1), (f_2,g_2) \in \sZ^1(\cF,\cL)\) denote \((f,g)
      := (f_1 \cdot f_2,\, g_1 \cdot g_2)\). That \(f(p,-,-) = f_1(p,-,-) \cdot
      f_2(p,-,-) \in \sZ^1(\cF_p,\cL_p)\) for each \(p\) is
      Proposition~\ref{prop:cocycle_coboundary_submonoids}, applied pointwise. That
      Conditions~\eqref{eq:global_cocycle_4} and~\eqref{eq:global_cocycle_3} hold is
      clear. For Condition~\eqref{eq:global_cocycle_1} we have
      \begin{align*}
        \alpha_{q,r}\bigl(g(p,q,x)\bigr) \cdot g\bigl(q,r,\phi_{p,q}(x)\bigr) & = \alpha_{q,r}\bigl(g_1(p,q,x) \cdot g_2(p,q,x)\bigr) \cdot g_1\bigl(q,r,\phi_{p,q}(x)\bigr) \cdot g_2\bigl(q,r,\phi_{p,q}(x)\bigr)         \\
                                                                              & = \alpha_{q,r}(g_1(p,q,x)) \cdot \alpha_{q,r}(g_2(p,q,x)) \cdot g_1(q,r,\phi_{p,q}(x)) \cdot g_2(q,r,\phi_{p,q}(x))                         \\
                                                                              & = \bigl[\alpha_{q,r}(g_1(p,q,x)) \cdot g_1(q,r,\phi_{p,q}(x))\bigr] \cdot \bigl[\alpha_{q,r}(g_2(p,q,x)) \cdot g_2(q,r,\phi_{p,q}(x))\bigr] \\
                                                                              & = g_1(p,r,x) \cdot g_2(p,r,x)                                                                                                               \\
                                                                              & = g(p,r,x),
      \end{align*}
      where the fourth equality uses~\eqref{eq:Ziii} for \(g_1\) and for \(g_2\)
      individually.

      For Condition~\eqref{eq:global_cocycle_2}, we again compute directly to obtain
      \begin{align*}
        \alpha_{p,q}\bigl(f(p,x,y)\bigr) \cdot g(p,q,xy) & = \alpha_{p,q}\bigl(f_1(p,x,y) \cdot f_2(p,x,y)\bigr) \cdot g_1(p,q,xy) \cdot g_2(p,q,xy)                                    \\
                                                         & = \alpha_{p,q}(f_1(p,x,y)) \cdot \alpha_{p,q}(f_2(p,x,y)) \cdot g_1(p,q,xy) \cdot g_2(p,q,xy)                                \\
                                                         & = \bigl[\alpha_{p,q}(f_1(p,x,y)) \cdot g_1(p,q,xy)\bigr] \cdot \bigl[\alpha_{p,q}(f_2(p,x,y)) \cdot g_2(p,q,xy)\bigr]        \\
                                                         & = \Bigl[\phi_{p,q}(y)_*(g_1(p,q,x)) \cdot \phi_{p,q}(x)_*(g_1(p,q,y)) \cdot f_1(q,\phi_{p,q}(x),\phi_{p,q}(y))\Bigr]         \\
                                                         & \ \ \cdot \Bigl[\phi_{p,q}(y)_*(g_2(p,q,x)) \cdot \phi_{p,q}(x)_*(g_2(p,q,y)) \cdot f_2(q,\phi_{p,q}(x),\phi_{p,q}(y))\Bigr] \\
                                                         & = \phi_{p,q}(y)_*\bigl(g_1(p,q,x) \cdot g_2(p,q,x)\bigr) \cdot \phi_{p,q}(x)_*\bigl(g_1(p,q,y) \cdot g_2(p,q,y)\bigr)        \\
                                                         & \ \ \cdot f_1(q,\phi_{p,q}(x),\phi_{p,q}(y)) \cdot f_2(q,\phi_{p,q}(x),\phi_{p,q}(y))                                        \\
                                                         & = \phi_{p,q}(y)_*(g(p,q,x)) \cdot \phi_{p,q}(x)_*(g(p,q,y)) \cdot f(q,\phi_{p,q}(x),\phi_{p,q}(y)),
      \end{align*}
      where the fourth equality uses~\eqref{eq:Ziv} for \(g_1\) and \(g_2\)
      individually.
    \end{proof}

\subsection{Morphisms}

    We now wish to determine what morphisms of coextensions mean in this setting and
    what conditions they impose. Recall that in the affie case, a morphism between \(f,
    f'\in \sZ(M, \cL)\) corresponds to elements \(g\in \cC^0\) satisfying \(g(1_M) =
    e_{1_M}\) and
    \[
      f'(a,b) \cdot g(ab) \;=\; b_*(g(a)) \cdot a_*(g(b)) \cdot f(a,b)
      \qquad \forall\, a, b \in M.
    \]
    By Theorem~\ref{thm:classification_pcoex}, this is equivalent to a morphism between
    the coextensions
    \begin{equation} \label{eq:affine_pcoex_to_monoid_morphism}
      \gamma(g)\colon {}_{f'}M \to {}_fM \,\qtext{give by}\, (a,x)\mapsto (a\cdot
      g(x),x).
    \end{equation}
    The monoid sheaf version is essentially a gluing of this datum.

    In other words, for \((f,g), (f',g') \in \sZ^1(\cF,\cL)\), we will call a
    collection \(h_p \in \sC^0(\cF_p,\cL_p)\), \(p\in P\) a morphism if at every point,
    \(h_p: f(p,-,-) \longrightarrow f'(p,-,-)\) is an affine morphism. That is to say,
    \(h_p(1_{\cF_p}) = e_{1_{\cF_p}}\) and
    \[
      f'(p,a,b) \cdot h_p(ab) \;=\; b_*(h_p(a)) \cdot a_*(h_p(b)) \cdot f(p,a,b)
      \qquad \forall\, a,b \in \cF_p.
    \]
    However, we still need to make sure that the maps \(h_p\) are compatible.
    Compatibility is most naturally stated in terms of the associated monoids
    hommorphism, stated above in~\eqref{eq:affine_pcoex_to_monoid_morphism}. In our
    notation, \(h_p\) induce morphisms
    \[
      \gamma_{h_p} \colon \cG_p \longrightarrow \cG'_p, \qqtext{given by}
      \gamma_{h_p}(a,x) \;=\; \bigl(a \cdot h_p(x),\, x\bigr),
    \]
    \(\cG_p = {_{f(p)}}\cF_p\) and \(\cG'_p = {_{f'(p)}}\cF_p\) are the coextensions
    classified by \(f(p,-,-)\) and \(f'(p,-,-)\) respectively. Compatibility now means
    the commutativity of the diagrams
    \begin{equation} \label{dg:compatibility_pcoex}
      \xymatrix{
      \cG_p \ar[r]^{\gamma_{h_p}} \ar[d]_{\phi^{\cG}_{p,q}} & \cG'_p \ar[d]^{\phi^{\cG'}_{p,q}} \\
      \cG_q \ar[r]^{\gamma_{h_q}}                           & \cG'_q
      }
    \end{equation}
    for \(q \leq p \in P\). Of course, we want to express this in terms of the cochain
    elements, and the following lemma allows us to do just that:

    \begin{Le}
      In the notation just above the lemma, Diagram~\eqref{dg:compatibility_pcoex}
      commutes if and only if
      \begin{equation} \label{eq:mor_ii}
        \alpha_{p,q}(h_p(x)) \cdot g'(p,q,x) \;=\; g(p,q,x) \cdot h_q(\phi_{p,q}(x))
        \qquad \forall\, q \leq p,\; x \in \cF_p.
      \end{equation}
    \end{Le}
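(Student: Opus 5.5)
The plan is to compute both composites around Diagram~\eqref{dg:compatibility_pcoex} on an arbitrary element \((a,x)\in\cG_p\), with \(x\in\cF_p\) and \(a\in\cL_p(x)\). We then compare first coordinates, exactly as in the proofs of Lemmas~\ref{lem:phiG_monoid_hom} and~\ref{lem:phiG_transitive}. The only inputs are Formula~\eqref{eq:phiG_form}, applied once to \(\cG\) with the function \(g\) and once to \(\cG'\) with \(g'\), and the explicit form of \(\gamma_{h_p}\) and \(\gamma_{h_q}\).

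\emph{Down, then right.} By~\eqref{eq:phiG_form},
\[
  \phi^{\cG}_{p,q}(a,x) \;=\; \bigl(\alpha_{p,q}(a)\cdot g(p,q,x),\,\phi_{p,q}(x)\bigr).
\]
Applying \(\gamma_{h_q}\), which multiplies the first coordinate by \(h_q\) evaluated at the second coordinate, gives
\[
  \bigl(\alpha_{p,q}(a)\cdot g(p,q,x)\cdot h_q(\phi_{p,q}(x)),\,\phi_{p,q}(x)\bigr).
\]

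\emph{Right, then down.} We have \(\gamma_{h_p}(a,x)=(a\cdot h_p(x),x)\). Applying \(\phi^{\cG'}_{p,q}\) via~\eqref{eq:phiG_form} for \(g'\), and using that \(\alpha_{p,q}\) is pointwise a monoid homomorphism, gives
\[
  \bigl(\alpha_{p,q}(a)\cdot\alpha_{p,q}(h_p(x))\cdot g'(p,q,x),\,\phi_{p,q}(x)\bigr).
\]

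The second coordinates agree automatically. Both first coordinates have the form \(U(a)\cdot W\) with the common factor \(U(a)=\alpha_{p,q}(a)\). The remaining factors are \(W_1=g(p,q,x)\cdot h_q(\phi_{p,q}(x))\) and \(W_2=\alpha_{p,q}(h_p(x))\cdot g'(p,q,x)\).

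If \(W_1=W_2\) holds, which is~\eqref{eq:mor_ii}, then multiplying by \(\alpha_{p,q}(a)\) gives commutativity for every \(a\). Conversely, suppose the diagram commutes. Specialising to \(a=e_x\), we have \(\alpha_{p,q}(e_x)=e_{\phi_{p,q}(x)}\) because \(\alpha_{p,q}\) preserves units, so \(W_1=W_2\).

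We cannot cancel the factor \(\alpha_{p,q}(a)\), since we are working with monoids. The only point needing care is therefore to use this unit specialisation rather than cancellation, just as in the earlier lemmas. Beyond that the argument is routine bookkeeping.
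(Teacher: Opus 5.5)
Your proposal is correct and is essentially the paper's proof. Both compute the two composites on a generic \((a,x)\in\cG_p\), factor out the common term \(\alpha_{p,q}(a)\), and get the nontrivial direction by specialising to \(a=e_x\) rather than cancelling; you also spell out the trivial direction and why \(\alpha_{p,q}(e_x)=e_{\phi_{p,q}(x)}\), which the paper leaves implicit.
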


    \begin{proof}
      Fix \(q \leq p\) and \(x \in \cF_p\), let \(a \in \cL_p(x)\) be arbitrary, and
      take \((a,x) \in \cG_p\). Throughout our calculations, we will use
      \(\gamma_{h_p}(a,x) = (a \cdot h_p(x), x)\) and~\eqref{eq:phiG_form}.

      For the top-right, we have
      \begin{align*}
        \phi^{\cG'}_{p,q}\bigl(\gamma_{h_p}(a,x)\bigr) & = \phi^{\cG'}_{p,q}\bigl(a \cdot h_p(x),\, x\bigr)                                    \\
                                                       & = \Bigl(\alpha_{p,q}\bigl(a \cdot h_p(x)\bigr) \cdot g'(p,q,x),\; \phi_{p,q}(x)\Bigr) \\
                                                       & = \Bigl(\underbrace{\alpha_{p,q}(a)}_{=:\,U(a)} \cdot \underbrace{\alpha_{p,q}(h_p(x)) \cdot g'(p,q,x)}_{=:\,W_1},\; \phi_{p,q}(x)\Bigr),
      \end{align*}
      and for the bottom-left, we have
      \begin{align*}
        \gamma_{h_q}\bigl(\phi^{\cG}_{p,q}(a,x)\bigr) & = \gamma_{h_q}\bigl(\alpha_{p,q}(a) \cdot g(p,q,x),\, \phi_{p,q}(x)\bigr)              \\
                                                      & = \Bigl(\alpha_{p,q}(a) \cdot g(p,q,x) \cdot h_q(\phi_{p,q}(x)),\; \phi_{p,q}(x)\Bigr) \\
                                                      & = \Bigl(\underbrace{\alpha_{p,q}(a)}_{=\,U(a)} \cdot \underbrace{g(p,q,x) \cdot h_q(\phi_{p,q}(x))}_{=:\,W_2},\; \phi_{p,q}(x)\Bigr).
      \end{align*}
      Comparing these two, we are in the same boat as in
      Lemmas~\ref{lem:phiG_monoid_hom} and~\ref{lem:phiG_transitive}: We have
      \(U(a)\cdot W_1 = U(a) \cdot W_2\) and must deduce that \(W_1 = W_2\) under our
      assumptions. Again, following the same strategy as in the above lemmas, we
      evaluate at \(a = e_x\) and derive our desired result.
    \end{proof}

    All this finally allows us to define the classifying category:

\subsection{The classifying category \(\fC(\cF, \cL)\)}

    \begin{De}
      Define \(\fC(\cF, \cL)\) to be the category whose objects are elements of
      \(\sZ^1(\cF, \cL)\), and for two elements \((f,g) \to (f',g')\), morphisms are
      compatible families \(h = (h_p)_{p \in P}\) of morphisms of extensions, see
      Definition~\ref{def:classifying_cat}. By compatible we mean that
      \[
        \alpha_{p,q}(h_p(x)) \cdot g'(p,q,x) \;=\; g(p,q,x) \cdot h_q(\phi_{p,q}(x))
        \qquad \forall\, q \leq p,\; x \in \cF_p
      \]
      holds. Composition is simply the pointwise product: \((h \cdot h')_p(x) := h_p(x)
      \cdot h'_p(x)\) and the unit is \(e_p \colon x \mapsto e_x\) for each \(p\).
    \end{De}

    \begin{Le}
      \(\fC(\cF, \cL)\) is a well-defined category.
    \end{Le}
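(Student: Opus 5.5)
The plan is to check the category axioms directly: identities, closure of composition, associativity and unitality. Throughout we lean on Lemma~\ref{le:C_is_category}, which already does the pointwise work for each \(p\). The only genuinely new thing to verify is that the compatibility condition~\eqref{eq:mor_ii} is preserved under the pointwise product and holds for the identity family.

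\emph{Identities.} For each \(p\), the family \(e_p\colon x\mapsto e_x\) is the identity morphism of \(f(p,-,-)\) in \(\fC(\cF_p,\cL_p)\) by Definition~\ref{def:classifying_cat}. For compatibility with \((f,g)\to(f,g)\) we need
\[
  \alpha_{p,q}(e_x)\cdot g(p,q,x) \;=\; g(p,q,x)\cdot e_{\phi_{p,q}(x)}.
\]
This holds because \(\alpha_{p,q}\) is a monoid homomorphism, so \(\alpha_{p,q}(e_x)=e_{\phi_{p,q}(x)}\).

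\emph{Composition.} Let \(h\colon(f,g)\to(f',g')\) and \(h'\colon(f',g')\to(f'',g'')\). For each \(p\), the product \(h_p\cdot h'_p\) is a morphism \(f(p,-,-)\to f''(p,-,-)\) of \(\fC(\cF_p,\cL_p)\) by Lemma~\ref{le:C_is_category}; note this lemma is stated with the order of variables reversed, but the computation there is symmetric. For compatibility we multiply the two instances of~\eqref{eq:mor_ii},
\[
  \alpha_{p,q}(h_p(x))\, g'(p,q,x) = g(p,q,x)\, h_q(\phi_{p,q}(x))
  \quad\text{and}\quad
  \alpha_{p,q}(h'_p(x))\, g''(p,q,x) = g'(p,q,x)\, h'_q(\phi_{p,q}(x)).
\]
Using commutativity of \(\cL_q(\phi_{p,q}(x))\) and multiplicativity of \(\alpha_{p,q}\), this yields
\[
  \alpha_{p,q}\bigl((h_p h'_p)(x)\bigr)\cdot g''(p,q,x)\cdot g'(p,q,x)
  \;=\;
  g(p,q,x)\cdot (h_q h'_q)(\phi_{p,q}(x))\cdot g'(p,q,x).
\]

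\emph{Main obstacle.} We now have to cancel the common factor \(g'(p,q,x)\), which need not be invertible in a monoid. I would try two routes, in this order.

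First, reformulate compatibility as commutativity of Diagram~\eqref{dg:compatibility_pcoex}, using the lemma just proved. The composite family then corresponds to the monoid maps \(\gamma_{h'_p}\circ\gamma_{h_p}=\gamma_{h_p h'_p}\), since \(\gamma\) turns pointwise products into composites (immediate from its formula). Pasting the two commuting squares for \(h\) and \(h'\) shows that the square for \(h_p h'_p\) commutes. The lemma, in its ``if'' direction, then gives~\eqref{eq:mor_ii} for the product, with no cancellation needed.

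If that equivalence only holds in one direction, the second route is the same evaluation at \(a=e_x\) used in Lemmas~\ref{lem:phiG_monoid_hom} and~\ref{lem:phiG_transitive}. Pasting squares gives equality of the maps on all of \(\cG_p\), and evaluating both composites at \((e_x,x)\) extracts exactly the desired identity.

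Associativity and unitality of composition are then inherited from the pointwise product in the commutative monoids \(\cL_p(x)\), and morphisms compose in the correct direction. This completes the category structure.
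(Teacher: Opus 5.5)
Your proof is correct, but it takes a detour that the paper avoids. The ``main obstacle'' does not come from the problem itself. It appears only because you multiplied the two instances of \eqref{eq:mor_ii}, which puts a factor \(g'(p,q,x)\) on both sides. If you instead substitute one instance into the other, no cancellation is ever needed, and this is exactly what the paper does:
\begin{align*}
  \alpha_{p,q}\bigl(h_p(x)h'_p(x)\bigr)\, g''(p,q,x)
  &= \alpha_{p,q}(h_p(x))\,\bigl[\alpha_{p,q}(h'_p(x))\, g''(p,q,x)\bigr]\\
  &= \bigl[\alpha_{p,q}(h_p(x))\, g'(p,q,x)\bigr]\, h'_q(\phi_{p,q}(x))\\
  &= g(p,q,x)\, h_q(\phi_{p,q}(x))\, h'_q(\phi_{p,q}(x)).
\end{align*}
This uses \eqref{eq:mor_ii} for \(h'\), then for \(h\), together with commutativity of \(\cL_q(\phi_{p,q}(x))\).

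Your pasting route is still valid. The lemma relating Diagram~\eqref{dg:compatibility_pcoex} to \eqref{eq:mor_ii} is a purely set-level statement, and both of its directions hold. The converse follows by evaluating at \((e_x,x)\), using \(\alpha_{p,q}(e_x)=e_{\phi_{p,q}(x)}\). Also, \(\gamma_{h'_p}\circ\gamma_{h_p}=\gamma_{h_ph'_p}\) holds on both rows.

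One terminology slip: for the product you need ``diagram commutes \(\Rightarrow\) \eqref{eq:mor_ii}''. That is the ``only if'' direction of the lemma, not the ``if'' direction.

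If you unwind your pasting at \((e_x,x)\), it reproduces the paper's chain exactly. Applying the \(h'\)-square to \(\gamma_{h_p}(e_x,x)=(h_p(x),x)\) is the first substitution, and the \(h\)-square gives the second.

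So the diagrammatic version gives a conceptual reason why compatibility is closed under composition: commuting squares paste. The cost is that a two-line computation is routed through an auxiliary lemma. The paper's version is shorter and self-contained. Your treatment of identities, associativity and unitality matches the paper's, and spells them out in slightly more detail.
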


    \begin{proof}
      By Lemma~\ref{le:C_is_category}, all we really need to check is that the
      compatibility condition~\eqref{eq:mor_ii} is respected by the identities and the
      composition. That the identity map, meaning \((f,g) = (f',g')\) and \(h_p = e_p\)
      satisfies it is straightforward.

      For composition, let \(h \colon (f,g) \to (f',g')\) and \(h' \colon (f',g') \to
      (f'',g'')\) be morphisms in \(\fC(\cF,\cL)\). We must show \(h \cdot h' \colon
      (f,g) \to (f'',g'')\) satisfies~\eqref{eq:mor_ii}. We have
      \begin{align*}
        \alpha_{p,q}\bigl((h \cdot h')_p(x)\bigr) \cdot g''(p,q,x) & = \alpha_{p,q}\bigl(h_p(x) \cdot h'_p(x)\bigr) \cdot g''(p,q,x)                                                   \\
                                                                   & = \alpha_{p,q}(h_p(x)) \cdot \Bigl[\alpha_{p,q}(h'_p(x)) \cdot g''(p,q,x)\Bigr]                                   \\
                                                                   & = \alpha_{p,q}(h_p(x)) \cdot \Bigl[g'(p,q,x) \cdot h'_q(\phi_{p,q}(x))\Bigr]\tag{by \eqref{eq:mor_ii} for \(h'\)} \\
                                                                   & = \Bigl[\alpha_{p,q}(h_p(x)) \cdot g'(p,q,x)\Bigr] \cdot h'_q(\phi_{p,q}(x))                                      \\
                                                                   & = \Bigl[g(p,q,x) \cdot h_q(\phi_{p,q}(x))\Bigr] \cdot h'_q(\phi_{p,q}(x)) \tag{by \eqref{eq:mor_ii} for \(h\)}    \\
                                                                   & = g(p,q,x) \cdot \Bigl[h_q(\phi_{p,q}(x)) \cdot h'_q(\phi_{p,q}(x))\Bigr]                                         \\
                                                                   & = g(p,q,x) \cdot (h \cdot h')_q(\phi_{p,q}(x)),
      \end{align*}
      which is exactly~\eqref{eq:mor_ii} for \(h \cdot h'\).
    \end{proof}

\subsection{Classification theorem}\label{sec:classifying_thm_functor}

    We now wish to classify \(\Pcoex(\cF,\cL)\) through \(\fC(\cF,\cL)\). It is no
    longer a direct equivalence, but we do have an embedding. We thus want to
    understand what the image looks like explicitly. We start by analysing what it
    means for a morphism of \(\fC(\cF,\cL)\) to be invertible on the cochain level.

    \begin{Le} \label{lem:mor_ii_invertible}
      A morphism \(h = (h_p)_p \colon (f,g) \to (f',g')\) of \(\fC(\cF,\cL)\) is
      invertible if and only if \(h_p(x) \in \cL_p(x)^\times\) for every \(p \in P\)
      and \(x \in \cF_p\). Its inverse is then given by \(h^{-1} := (h_p^{-1})_p\),
      with \(h_p^{-1}(x) := h_p(x)^{-1} \colon (f',g') \to (f,g)\).
    \end{Le}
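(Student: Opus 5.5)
The plan is to reduce the statement to the affine case, Lemma~\ref{le:g_invertible}, and then check that the pointwise inverses are again compatible in the sense of~\eqref{eq:mor_ii}.

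\emph{Necessity.} Suppose \(h\colon (f,g)\to(f',g')\) has an inverse \(k\colon (f',g')\to(f,g)\) in \(\fC(\cF,\cL)\). Composition in \(\fC(\cF,\cL)\) is the pointwise product and the unit is \(e_p\colon x\mapsto e_x\). So \(h\cdot k=e\) and \(k\cdot h=e\) give \(h_p(x)\cdot k_p(x)=e_x\) in \(\cL_p(x)\) for every \(p\) and \(x\). Hence each \(h_p(x)\) is a unit with inverse \(k_p(x)\), and this also identifies the inverse as \(h_p(x)^{-1}\).

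\emph{Sufficiency.} Assume every \(h_p(x)\in\cL_p(x)^\times\) and define \(h^{-1}_p(x):=h_p(x)^{-1}\).
\begin{enumerate}
\item For each fixed \(p\), Lemma~\ref{le:g_invertible} applied to the affine morphism \(h_p\colon f(p,-,-)\to f'(p,-,-)\) shows that \(h_p^{-1}\) is a morphism \(f'(p,-,-)\to f(p,-,-)\) in \(\fC(\cF_p,\cL_p)\). Explicitly, we multiply \eqref{eq:morphism_condition} by \(h_p(ab)^{-1}\), \(b_*(h_p(a))^{-1}\) and \(a_*(h_p(b))^{-1}\); these are inverses because \(a_*,b_*\) are monoid homomorphisms and so preserve units. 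The unit condition \(h_p^{-1}(1)=e_1\) is immediate.
\item It remains to check that \(h^{-1}\) satisfies the compatibility condition~\eqref{eq:mor_ii}, now with the roles of \((f,g)\) and \((f',g')\) swapped. That is, we need
\[ \alpha_{p,q}\bigl(h_p(x)^{-1}\bigr)\cdot g(p,q,x) \;=\; g'(p,q,x)\cdot h_q\bigl(\phi_{p,q}(x)\bigr)^{-1}. \]
To get this, start from~\eqref{eq:mor_ii} for \(h\):
\[ \alpha_{p,q}(h_p(x))\cdot g'(p,q,x) \;=\; g(p,q,x)\cdot h_q(\phi_{p,q}(x)). \]
Multiply both sides in \(\cL_q(\phi_{p,q}(x))\) by the unit \(\alpha_{p,q}(h_p(x))^{-1}\cdot h_q(\phi_{p,q}(x))^{-1}\). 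This is legitimate because \(\alpha_{p,q}\) is a monoid homomorphism, so it sends units to units and \(\alpha_{p,q}(h_p(x))^{-1}=\alpha_{p,q}(h_p(x)^{-1})\). Since \(\cL_q(\phi_{p,q}(x))\) is commutative, rearranging gives exactly the required identity.
\item Finally, \(h\cdot h^{-1}=h^{-1}\cdot h=e\) pointwise, so \(h^{-1}\) is a two-sided inverse of \(h\) in \(\fC(\cF,\cL)\).
\end{enumerate}

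No step is a real obstacle. The only point needing care is the compatibility check in step~2: there we cancel the units and do not cancel \(g\) or \(g'\), which need not be invertible. Because the factors we remove are all units, the manipulation uses no inverses beyond those assumed.
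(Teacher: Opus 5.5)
Your proposal is correct and matches the paper's proof. Necessity comes from composition in \(\fC(\cF,\cL)\) being the pointwise product, and sufficiency comes from multiplying \eqref{eq:mor_ii} for \(h\) by the unit \(\alpha_{p,q}(h_p(x))^{-1}\cdot h_q(\phi_{p,q}(x))^{-1}\); your explicit check of the stalkwise condition~\eqref{eq:morphism_condition} for \(h_p^{-1}\), and that \(\alpha_{p,q}\) preserves inverses, spells out what the paper leaves to Lemma~\ref{le:g_invertible}.
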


    \begin{proof}
      Recall that composition in \(\fC(\cF,\cL)\) is the pointwise product. A such,
      invertibility of \(h\) immediately implies the invertibility of each \(h_p\) and
      thus, \(h_p(x) \in \cL(x)^\times\) for every \(x\) (and every \(p\in P\)).

      To prove the reverse, we have to prove that the pointwise inverse induces a
      global inverse. In other words, the only thing to show is that
      Condition~\eqref{eq:mor_ii} holds for the inverses, meaning that
      \[
        \alpha_{p,q}\bigl(h_p(x)^{-1}\bigr) \cdot g(p,q,x) \;=\; g'(p,q,x) \cdot
        h_q(\phi_{p,q}(x))^{-1}
      \]
      holds. We \emph{do} know that
      \[
        \alpha_{p,q}(h_p(x)) \cdot g'(p,q,x) \;=\; g(p,q,x) \cdot h_q(\phi_{p,q}(x)),
      \]
      holds and thus, we multiply both sides of this equation by the unit
      \(\alpha_{p,q}(h_p(x))^{-1} \cdot h_q(\phi_{p,q}(x))^{-1}\).
    \end{proof}

    Recall that we used the notation \(\fC(\cF,\cL)^\times\) to denote the core of
    \(\fC(\cF,\cL)\). By Lemma~\ref{lem:mor_ii_invertible}, \(\fC(\cF,\cL)^\times\) is
    thus equivalent to the wide subcategory of \(\fC(\cF,\cL)\) with the same objects
    \(\sZ^1(\cF,\cL)\) and morphisms \(h = (h_p)_p\) with \(h_p(x) \in
    \cL_p(x)^\times\) for every \(p \in P\), \(x \in \cF_p\).

    \begin{Th} \label{thm:classification_pcoex_functor}
      Let \(\cF\) be a sheaf of commutative monoids over a finite poset \(P\) and \(\cL
      \in \cHCM(\cF)\). There is an equivalence of categories
      \[
        \fC(\cF, \cL)^\times \;\simeq\; \Pcoex(\cF, \cL).
      \]
      Consequently, \(\pi_0(\Pcoex(\cF,\cL)) \cong \sZ^1(\cF,\cL)/\sim\), where \((f,g)
      \sim (f',g')\) exactly when there is an isomorphism between them in
      \(\fC(\cF,\cL)\), and
      \[
        \Aut_{\Pcoex(\cF, \cL)}(\cL \rtimes \cF) \;\cong\; \sD^0(\cF,\cL)^\times.
      \]
    \end{Th}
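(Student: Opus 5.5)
I will build a functor \(\Gamma\colon \fC(\cF,\cL)^\times \to \Pcoex(\cF,\cL)\) by gluing the affine equivalences of Theorem~\ref{thm:classification_pcoex}, and then check that it is fully faithful and essentially surjective.

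\emph{The functor on objects.} Take \((f,g)\in\sZ^1(\cF,\cL)\). For each \(p\in P\), the stalk is \(\cG_p := {}_{f(p)}\cF_p\), which by Lemma~\ref{le:cocycle_extension} is a precartesian coextension of \(\cF_p\) by \(\cL_p\). For \(q\le p\), I define \(\phi^{\cG}_{p,q}\) by formula~\eqref{eq:phiG_form}. The four constraints of Definition~\ref{def:Z1_pcoex_functor} then give the required properties, in each case via the ``if'' directions of the corresponding lemmas.
\begin{itemize}
\item Lemma~\ref{lem:phiG_monoid_hom} shows that \(\phi^{\cG}_{p,q}\) is a homomorphism.
\item Lemma~\ref{le:Z_norm_1} shows that it is unital.
\item Lemma~\ref{le:Z_norm_pp} shows that \(\phi^{\cG}_{p,p}=\id\).
\item Lemma~\ref{lem:phiG_transitive} shows that the maps compose correctly.
\end{itemize}
Hence \(\cG\) is a sheaf of commutative monoids. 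The projections \(\pi_p(a,x)=x\) assemble to a natural transformation, since the second coordinate of \(\phi^{\cG}_{p,q}\) is \(\phi_{p,q}(x)\).

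It remains to check condition (ii) of Definition~\ref{def:pcoex_functor}. Equivariance follows directly from the form \(\alpha_{p,q}(a)\cdot g(p,q,x)\). Preservation of basepoints is the delicate point. The map \(\phi^{\cG}_{p,q}\) sends \((e_x,x)\) to \((g(p,q,x),\phi_{p,q}(x))\). By Lemma~\ref{le:cartesian_criterion}, this is a basepoint exactly when \(g(p,q,x)\in\cL_q(\phi_{p,q}(x))^\times\).

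This invertibility is not among the listed constraints. I expect it to be the main obstacle. I would first try to deduce it from constraints~\eqref{eq:global_cocycle_1} and~\eqref{eq:global_cocycle_3}, but I do not see how that could work. The more likely resolution is to restrict to the full subcategory of pairs with \(g\) taking unit values, which is also exactly what the converse direction produces. I would make this restriction explicit in the proof.

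\emph{The functor on morphisms.} An isomorphism \(h\) in \(\fC(\cF,\cL)^\times\) has invertible \(h_p\) by Lemma~\ref{lem:mor_ii_invertible}. Each \(\gamma(h_p)\) is therefore a morphism of \(\Pcoex(\cF_p,\cL_p)\) by Lemma~\ref{le:iso_determined_by_val}. Condition~\eqref{eq:mor_ii} is precisely naturality, by the lemma preceding the definition of \(\fC(\cF,\cL)\). Functoriality follows because \(\gamma\) turns pointwise products into composites.

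\emph{Fully faithful.} A morphism \(\cG\to\cG'\) of \(\Pcoex(\cF,\cL)\) is a family of affine morphisms \(\zeta_p\). By the affine full faithfulness, each \(\zeta_p\) is \(\gamma(h_p)\) for a unique invertible \(h_p\). Naturality of the family \((\zeta_p)\) is then equivalent to~\eqref{eq:mor_ii}, again by the same lemma.

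\emph{Essential surjectivity.} This is the computation already carried out in Section~\ref{sec:global_grillet_pcoex}. Given \(\cG\), I choose affine identifications \(\cG_p\cong{}_{f(p)}\cF_p\) using Theorem~\ref{thm:classification_pcoex}, and transport the restriction maps along them. This yields \(g\) via~\eqref{eq:phiG_form}, and the ``only if'' directions of the four lemmas place \((f,g)\) in \(\sZ^1(\cF,\cL)\). The transported family of isomorphisms is natural by construction, so \(\cG\cong\Gamma(f,g)\). Since the restrictions preserve basepoints, the resulting \(g\) is unit-valued, which matches the restriction above.

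\emph{Consequences.} The description of \(\pi_0\) is immediate from the equivalence. For the automorphism group, \(\cL\rtimes\cF\) corresponds to the unit pair \((e,e)\). An automorphism \(h\) satisfies the affine derivation condition of Theorem~\ref{thm:classification_pcoex}(2) at each stalk, and~\eqref{eq:mor_ii} with \(g=g'=e\) reads \(\alpha_{p,q}(h_p(x))=h_q(\phi_{p,q}(x))\). So \(\Aut(\cL\rtimes\cF)\) consists of the compatible families of invertible derivations, which I take as the meaning of \(\sD^0(\cF,\cL)^\times\).
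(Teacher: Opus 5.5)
Your proof follows the paper's own argument: it glues the affine equivalence stalkwise, with objects built from \eqref{eq:phiG_form} and the four lemmas, morphisms from $\gamma$ and \eqref{eq:mor_ii}, and essential surjectivity by transporting the restriction maps along the affine identifications. The obstacle you flag is genuine, and it is a defect of the paper rather than of your proposal. The paper's proof only asserts that Condition~(ii) of Definition~\ref{def:pcoex_functor} holds ``essentially by construction''.

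Invertibility of $g$ cannot be deduced from the four constraints, and the theorem as stated is false. Take the following data.
\begin{enumerate}
\item $P=\{q<p\}$.
\item $\cF$ constant with value $T=\{1,t\}$, $t^2=t$, and identity restriction.
\item $\cL$ constant with value $L=\{e,z\}$, $z^2=z$, and $\alpha_{p,q}=\id$.
\end{enumerate}
A pair is then determined by $f(p,t,t),f(q,t,t),g(p,q,t)\in L$. The constraints reduce to $f(p,t,t)\cdot g(p,q,t)=g(p,q,t)\cdot f(q,t,t)$, which has six solutions. Four of them have $g(p,q,t)=z$, and for those $\phi^{\cG}_{p,q}(e,t)=(z,t)$ is not a basepoint, by Lemma~\ref{le:cartesian_criterion}.

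Since $L^\times=\{e\}$, Lemma~\ref{lem:mor_ii_invertible} makes $\fC(\cF,\cL)^\times$ discrete with six objects. On the other hand, every genuine coextension has unit-valued $g$, hence $g(p,q,t)=e$. That leaves at most two isomorphism classes in $\Pcoex(\cF,\cL)$, so no equivalence can exist.

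Your restriction to pairs with $g(p,q,x)\in\cL_q(\phi_{p,q}(x))^\times$ is therefore necessary; equivalently, it is a fifth constraint in Definition~\ref{def:Z1_pcoex_functor}. With it, your argument proves the corrected theorem.

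Carry the correction into the consequences rather than calling them immediate. These pairs form a submonoid closed under isomorphism in $\fC(\cF,\cL)$: for invertible $h$, \eqref{eq:mor_ii} gives $g'(p,q,x)=\alpha_{p,q}(h_p(x))^{-1}\cdot g(p,q,x)\cdot h_q(\phi_{p,q}(x))$. So $\pi_0(\Pcoex(\cF,\cL))$ is that submonoid modulo $\sim$, not all of $\sZ^1(\cF,\cL)/\sim$.

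Your handling of the automorphism statement goes beyond the paper, which neither defines $\sD^0(\cF,\cL)$ nor proves that part. Reading it as compatible families of unit-valued derivations, via \eqref{eq:mor_ii} with $g=g'=e$, is the natural interpretation, and your verification is correct.
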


    \begin{proof}
      We construct a functor \(\Gamma \colon \fC(\cF,\cL)^\times \to \Pcoex(\cF,\cL)\)
      and show it is full and faithful, and essentially surjective. The construction of
      the sheaf is essentially our preliminary work in this section summarised.
      Specifically, we construct \(\Gamma\) as follows:

      On objects, it takes an element \((f,g) \in \sZ^1(\cF,\cL)\) to the sheaf of
      monoids \(\cG\) which is assembled by the stalks \(\cG_p := {_{f(p)}}\cF_p\) and
      striction maps \(\phi^{\cG}_{p,q}: \cG_p \to \cG_q\) for \(q \leq p\). This
      assembles to a sheaf by Lemmas~\ref{lem:phiG_monoid_hom},
      \ref{lem:phiG_transitive} and~\ref{le:Z_norm_pp}. It is a precartesian
      coextension of \(\cF\) by \(\cL\) as Condition~(i) of
      Definition~\ref{def:pcoex_functor} holds pointwise by
      Theorem~\ref{thm:classification_pcoex} and Condition~(ii) holds essentially by
      our construction of Condition~\eqref{eq:phiG_form} (see the discuessed text just
      above it). We thus set
      \[ \Gamma(f,g) \;:=\; (0 \to \cL \to \cG \to \cF \to 0) \in \Pcoex(\cF, \cL). \]

      On morphisms, take \(h = (h_p)_p \colon (f,g) \to (f',g')\) in
      \(\fC(\cF,\cL)^\times\), so that \(h_p(x) \in \cL_p(x)^\times\) for every \(p,x\)
      by Lemma~\ref{lem:mor_ii_invertible}. We have essentially already defined
      \(\Gamma(h)\) in Diagram~\ref{dg:compatibility_pcoex}: to restate it, we map
      \(h\) to \(\Gamma(h) := (\gamma_{h_p})_p\), where \(\gamma_{h_p} \colon \cG_p \to
      \cG'_p\) are as in Diagram~\ref{dg:compatibility_pcoex}. By
      Theorem~\ref{thm:classification_pcoex}, each \(\gamma_{h_p}\) is (as \(h_p\) is
      invertible) a genuine morphism of \(\Pcoex(\cF_p,\cL_p)\), so \(\Gamma(h)\) is a
      genuine morphism of \(\Pcoex(\cF,\cL)\), since composition and natural
      transformations reduce to pointwise products.

      \medskip \noindent To see that it is an equivalence, we start by checking that it
      is full and faithful. Recall that a morphism \(H \colon \cG \to \cG'\) of
      \(\Pcoex(\cF,\cL)\) is just a natural transformation of sheaves for which
      \((\id_{\cL_p}, H_p, \id_{\cF_p})\) is a morphism of precartesian coextensions at
      each stalk \(p\). By Theorem~\ref{thm:classification_pcoex}, each \(H_p\) is of
      the form \(\gamma_{h_p}\) for a unique \(h_p \in \fC(\cF_p,\cL_p)^\times\). These
      assemble into a unique morphism \((f,g) \to (f',g')\) in \(\fC(\cF,\cL)^\times\)
      with \(\Gamma((h_p)_p) = H\).

      Lastly, it remains to prove that it is essentially surjective. Let \(0 \to \cL
      \to \cG \xto{\pi} \cF \to 0 \in \Pcoex(\cF,\cL)\). We must construct an element
      \(\Gamma(f,g) \in \sZ^1(\cF,\cL)\) such that \(\cG \xto{\sim} \Gamma(f,g)\) are
      isomorphic as precartesian coextensions, in \(\Pcoex(\cF,\cL)\).

      In other words, for each \(q \leq p\), we have to find a cocycle \(f(p,-,-)\in
      \sZ^1(\cF_p,\cL_p)\), an isomorphism \(\varphi_p \colon \cG_p \xto{\sim}
      {}_{f(p)}\cF_p\) of \(\Pcoex(\cF_p,\cL_p)\), and an element \(g(p,q,x) \in
      \cL_q(\phi_{p,q}(x))\) for every \(x \in \cF_p\). These must satisfy \((f,g) \in
      \sZ^1(\cF,\cL)\) and
      \[
        \varphi_q \circ \phi^{\cG}_{p,q} \;=\; \phi^{\Gamma(f,g)}_{p,q} \circ
        \varphi_p.
      \]
      The collection of such \(f(p,-,-)\) and \(g(p,q,-)\) will then assemble to a
      \(\Gamma(f,g)\in \sZ(\cF, \cL)\) by Definition~\ref{def:Z1_pcoex_functor}, and
      the \(\varphi_p\) to a morphism of precartesian coextensions.

      \medskip \noindent At each \(p \in P\), choose a cleavage \(\kappa_p \colon \cF_p
      \to \cG_p\), meaning \(\kappa_p(x)\) is a basepoint of \(\pi_p^{-1}(x)\) for
      every \(x\), with \(\kappa_p(1_{\cF_p}) = 1_{\cG_p}\). Since \(\cG_p\) is a
      precartesian coextension of \(\cF_p\) by \(\cL_p\), every element of \(\cG_p\) is
      of the form \(a \pt \kappa_p(x)\) for some \(x \in \cF_p\) and \(a \in \cL_p(x)\)
      unique. The essential surjectivity of the equivalence in
      Theorem~\ref{thm:classification_pcoex}, applied to \(\cG_p\) with this cleavage,
      yields a cocycle \(f(p,-,-) \in \sZ^1(\cF_p,\cL_p)\), and an isomorphism of
      \(\Pcoex(\cF_p,\cL_p)\)
      \[
        \varphi_p \colon \cG_p \xto{\sim} {_{f(p)}}\cF_p, \qquad
        \varphi_p(\kappa_p(x)) \;=\; (e_x,x).
      \]

      \medskip \noindent Let \(q \leq p\). For \(x \in \cF_p\), denote \(y :=
      \phi_{p,q}(x)\). Since \(\cG_q\) is precartesian and \(\kappa_q(y)\) is a
      basepoint of \(\pi_q^{-1}(y)\), we have a unique element \(g(p,q,x) \in
      \cL_q(y)\) with
      \begin{equation}
        \phi^{\cG}_{p,q}(\kappa_p(x)) \;=\; g(p,q,x) \pt \kappa_q(y).
      \end{equation}
      This is our definition of \(g\), and we now proceed to verify that it is a valid
      choice: Take an element \(a \pt \kappa_p(x) \in \cG_p\), with \(a \in \cL_p(x)\).
      As \(\phi^{\cG}_{p,q}\) respects \(\alpha_{p,q}\) by
      Definition~\ref{def:pcoex_functor}(ii), we have
      \begin{eqnarray*}
        \phi^{\cG}_{p,q}\bigl(a \pt \kappa_p(x)\bigr) & = & \alpha_{p,q}(a) \pt \phi^{\cG}_{p,q}(\kappa_p(x))        \\
                                                      & = & \alpha_{p,q}(a) \pt \bigl(g(p,q,x) \pt \kappa_q(y)\bigr) \\
                                                      & = & \bigl(\alpha_{p,q}(a) \cdot g(p,q,x)\bigr) \pt \kappa_q(y),
      \end{eqnarray*}
      Applying \(\varphi_q\), we get
      \begin{eqnarray*}
        \varphi_q\Bigl(\phi^{\cG}_{p,q}\bigl(a \pt \kappa_p(x)\bigr)\Bigr) & = & \bigl(\alpha_{p,q}(a) \cdot g(p,q,x),\; y\bigr) \\
                                                                           & = & \phi^{\Gamma(f,g)}_{p,q}\bigl(\varphi_p(a \pt \kappa_p(x))\bigr),
      \end{eqnarray*}
      as \(\cL_q\) respects the action and \(\varphi_q(\kappa_q(y)) = (e_y,y)\)).
      Moreover \(\varphi_p(a\pt\kappa_p(x)) = (a,x)\) holds and
      \(\phi^{\Gamma(f,g)}_{p,q}\) acts as in Formula~\eqref{eq:phiG_form}. As \(a \pt
      \kappa_p(x)\) was a generic element of \(\cG_p\), the required compatibility
      \begin{equation} \label{eq:varphis_functor}
        \varphi_q \circ \phi^{\cG}_{p,q} = \phi^{\Gamma(f,g)}_{p,q} \circ \varphi_p
      \end{equation}
      is proved.

      \smallskip

      It remains to show that \((f,g) \in \sZ^1(\cF,\cL)\), meaning that the four
      conditions of Definition~\ref{def:Z1_pcoex_functor} hold. This is equivalent to
      showing that \(\Gamma(f,g)\) is a sheaf, by using
      Lemmas~\ref{lem:phiG_monoid_hom}, \ref{lem:phiG_transitive}, \ref{le:Z_norm_pp}
      and~\ref{le:Z_norm_1}, and Equation~\eqref{eq:phiG_form}. But that
      \(\Gamma(f,g)\) is a sheaf is almost trivial, since we can just use the sheaf
      strucutre of \(\cG\) to endow \(\Gamma(f,g)\) with one. Indeed, the collection
      \(\varphi = (\varphi_p)_p\) yields the desired isomorpihsm \(\varphi\colon \cG
      \iso \Gamma(f,g)\) as each \(\varphi_p\) is an isomorphsm in
      \(\Pcoex(\cF_p,\cL_p)\), and Relation~\ref{eq:varphis_functor} is precisely the
      fact that these isomorphism glue. We can rearrange these to get
      \begin{equation}
        \phi^{\Gamma(f,g)}_{p,q} \;=\; \varphi_q \circ \phi^{\cG}_{p,q} \circ
        \varphi_p^{-1}, \qquad q \le p,
      \end{equation}
      making \(\phi^{\Gamma(f,g)}_{p,q}\) the transport of \(\phi^{\cG}_{p,q}\) along
      the isomorphisms \(\varphi_p,\varphi_q\).
    \end{proof}

    Just as in the affine case, we can identify the Baer sum, stalkwise, on the cochain
    side, transporting the monoidal structure of
    Theorem~\ref{thm:monoidal_Pcoex_functor} to \(\fC(\cF,\cL)^\times\) along the
    equivalence \(\Gamma\) of Theorem~\ref{thm:classification_pcoex_functor}, rather
    than re-establishing and separately reconciling it.

    \begin{Cor}
      Let \((f,g),(f',g')\in\sZ^1(\cF,\cL)\). There is an isomorphism of precartesian
      coextensions
      \[
        \Gamma(f,g) + \Gamma(f',g') \;\cong\; \Gamma\bigl((f,g)\cdot(f',g')\bigr),
      \]
      natural in \((f,g),(f',g')\), where the product on the right is the pointwise
      product on \(\sZ^1(\cF,\cL)\). Consequently \(\Gamma\) upgrades to an equivalence
      of symmetric monoidal groupoids
      \[
        \bigl(\fC(\cF,\cL)^\times,\,\cdot\bigr) \;\simeq\;
        \bigl(\Pcoex(\cF,\cL),\,+\bigr),
      \]
      with the strict symmetric monoidal structure on the left given by the pointwise
      product on \(\sZ^1(\cF,\cL)\).
    \end{Cor}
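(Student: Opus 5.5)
The plan is to reduce to the affine Proposition~\ref{prop:baer_cocycle_affine} stalk by stalk and then check that the stalkwise isomorphisms glue. Write \(\cG=\Gamma(f,g)\) and \(\cG'=\Gamma(f',g')\), so that \(\cG_p={}_{f(p)}\cF_p\) and \(\cG'_p={}_{f'(p)}\cF_p\). By Theorem~\ref{thm:monoidal_Pcoex_functor}, \((\cG+\cG')_p=\cG_p+\cG'_p\). Its elements are classes \([u,(x,x',m)]\) with \(m\in\cF_p\), and its restriction maps are induced from \(\phi^{\cG}_{p,q}\times\phi^{\cG'}_{p,q}\) on the box product and from \(\alpha_{p,q}\) on the pushforward coordinate. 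At each \(p\) we take the affine isomorphism
\[
\Theta_p\colon \cG_p+\cG'_p\to{}_{f(p)f'(p)}\cF_p,\qquad [u,(x,x',m)]\mapsto (u\cdot x\cdot x',m).
\]
It is a morphism in \(\Pcoex(\cF_p,\cL_p)\) by Proposition~\ref{prop:baer_cocycle_affine}. The target is exactly the stalk at \(p\) of \(\Gamma((f,g)\cdot(f',g'))\), whose restriction maps are, by \eqref{eq:phiG_form}, \((a,m)\mapsto(\alpha_{p,q}(a)\cdot g(p,q,m)g'(p,q,m),\phi_{p,q}(m))\).

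The main step, and the only real content, is naturality in \(p\). We must show that \(\Theta_q\circ\phi^{\cG+\cG'}_{p,q}=\phi^{\Gamma(ff',gg')}_{p,q}\circ\Theta_p\). We compute on a representative \([u,(x,x',m)]\).

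The restriction of the box product sends \((x,x',m)\) to
\[
\bigl(\alpha_{p,q}(x)g(p,q,m),\,\alpha_{p,q}(x')g'(p,q,m),\,\phi_{p,q}(m)\bigr).
\]
The pushforward coordinate becomes \(\alpha_{p,q}(u)\). Applying \(\Theta_q\) then gives
\[
\bigl(\alpha_{p,q}(uxx')\cdot(gg')(p,q,m),\,\phi_{p,q}(m)\bigr),
\]
using that \(\alpha_{p,q}\) is a monoid homomorphism. Going the other way round the square gives the same element directly.

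The subtle point is that the restriction on the Baer sum must be well defined on classes. We will verify this using the invariant \(u\cdot\nabla(\xi)\): it transforms by \(\alpha_{p,q}\) and by multiplication by the fixed element \(g(p,q,m)g'(p,q,m)\). Multiplication by a fixed element preserves equality, so no inverses are needed.

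Naturality of \(\Theta\) in \((f,g)\) and \((f',g')\) is inherited stalkwise from the affine statement, since morphisms of \(\fC(\cF,\cL)^\times\) are compatible families \((h_p)_p\). Thus \(\Theta=(\Theta_p)_p\) is a natural transformation of sheaves that is an isomorphism in each \(\Pcoex(\cF_p,\cL_p)\), and hence an isomorphism in the groupoid \(\Pcoex(\cF,\cL)\).

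For the consequence, we endow \(\fC(\cF,\cL)^\times\) with the strict symmetric monoidal structure given by the pointwise product. This works because the product of two morphisms satisfying \eqref{eq:mor_ii} again satisfies it, by the same regrouping as in the composition lemma. The isomorphisms \(\Theta\) are the structure maps making \(\Gamma\) strong monoidal. We will check the associativity and symmetry coherence stalkwise, where they reduce to the affine Corollary~\ref{cor:transport_monoidal_affine}. There the symmetry swap \(\tau\) is sent to the identity, because \(f\cdot f'=f'\cdot f\) in the commutative monoid \(\sZ^1\). Combined with the equivalence of Theorem~\ref{thm:classification_pcoex_functor}, this yields the claimed equivalence of symmetric monoidal groupoids.
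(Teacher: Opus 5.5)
Your proposal is correct and follows the same route as the paper: identify the Baer sum stalkwise via Proposition~\ref{prop:baer_cocycle_affine}, glue the isomorphisms \(\Theta_p\), and transfer the monoidal structure as in Corollary~\ref{cor:transport_monoidal_affine}. The only difference is that you actually verify the gluing, whereas the paper merely asserts compatibility with the restriction maps on the grounds that the affine construction is canonical. Specifically, you check that the restriction on \(\cG+\cG'\) is well defined on classes, and that both sides of \(\Theta_q\circ\phi^{\cG+\cG'}_{p,q}=\phi^{\Gamma(ff',gg')}_{p,q}\circ\Theta_p\) equal \((\alpha_{p,q}(uxx')\cdot(gg')(p,q,m),\,\phi_{p,q}(m))\).
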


    \begin{proof}
      At each stalk \(p\), \(\Gamma(f,g)_p = {}_{f(p)}\cF_p\) by construction
      (Theorem~\ref{thm:classification_pcoex_functor}), and
      Proposition~\ref{prop:baer_cocycle_affine} gives a natural isomorphism
      \({}_{f(p)}\cF_p+{}_{f'(p)}\cF_p\cong{}_{(f\cdot f')(p)}\cF_p\), where \((f\cdot
      f')(p) = f(p)\cdot f'(p)\) is the pointwise product of cocycles at \(p\), by
      definition of the multiplication on \(\sZ^1(\cF,\cL)\)
      (Definition~\ref{def:Z1_pcoex_functor}). By
      Theorem~\ref{thm:monoidal_Pcoex_functor}, the Baer sum on \(\Pcoex(\cF,\cL)\) is
      formed stalkwise, so these isomorphisms \(\Psi_p\) at each \(p\) already
      constructed in the proof of Proposition~\ref{prop:baer_cocycle_affine} assemble
      into an isomorphism of sheaves, because that construction was canonical (no
      choices beyond the cleavages already fixed by the cocycle presentations
      themselves) and hence automatically compatible with the restriction maps
      \(\phi^\cG_{p,q}\), which are likewise built stalkwise from the same data. The
      argument for the second statement is then identical to that of
      Corollary~\ref{cor:transport_monoidal_affine}, applied stalkwise.
    \end{proof}

\end{document}